\def\x{{x}}
\def\y{{y}}
\def\Y_#1{y_{\!#1}}
\def\E{\mathbb{E}}
\def\P{\mathbb{P}}
\def\L{L}
\def\R{\mathbb{R}}
\def\H{\mathcal{H}}
\def\TV{\mathrm{TV}}
\def\tr{\operatorname{trace}}
\newcommand{\bph}[1]{\boldsymbol{#1}} 
\newcommand{\rn}[1]{\Romanbar{#1}}
\renewcommand{\tilde}{\widetilde} 
\newcommand{\norm}[1]{\left\lvert #1 \right\rvert} 
\newcommand{\snorm}[1]{\left\lvert #1 \right\rvert_s} 
\newcommand{\anorm}[1]{{\left\vert\kern-0.25ex\left\vert\kern-0.25ex\left\vert #1 
    \right\vert\kern-0.25ex\right\vert\kern-0.25ex\right\vert_{\alpha}}}
\numberwithin{equation}{section}
\newtheorem{theorem}{Theorem}[section]
\newtheorem{corollary}[theorem]{Corollary}
\newtheorem{lemma}[theorem]{Lemma}
\newtheorem{remark}[theorem]{Remark}
\newtheorem{assumption}[theorem]{Assumption}
\theoremstyle{definition}
\newtheorem{algorithm}{Algorithm}[section]
\newtheorem{example}{Example}[section]
\theoremstyle{remark}
\definecolor{myGreen}{rgb}{0.0, 0.5, 0.0}
\begin{document} 

\title{Two-scale coupling for preconditioned Hamiltonian Monte Carlo in\\ infinite dimensions}
\runtitle{Two-scale Coupling for HMC}

 \begin{aug}
 \author{\fnms{Nawaf} \snm{Bou-Rabee}\thanksref{t1,m1}\ead[label=e1]{nawaf.bourabee@rutgers.edu}}
 \and
 \author{\fnms{Andreas}
 \snm{Eberle}\thanksref{t2,m2}\ead[label=e2]{eberle@uni-bonn.de}}
 
 \address{\thanksmark{m1}Department of Mathematical Sciences \\ Rutgers University Camden \\ 311 N 5th Street \\ Camden, NJ 08102, USA \\ \printead{e1}}
 \address{\thanksmark{m2}Institut f\"{u}r Angewandte Mathematik \\  Universit\"{a}t Bonn \\ Endenicher Allee 60 \\  53115 Bonn, Germany \\ \printead{e2}}
 
 \thankstext{t1}{N.~B.-R. was supported by the National Science Foundation under Grant No.~DMS-1816378 and the Alexander von Humboldt Foundation.}
 \thankstext{t2}{A.E. has been supported by the Hausdorff Center for Mathematics. Gef\"ordert durch die Deutsche Forschungsgemeinschaft (DFG) im Rahmen der Exzellenzstrategie des Bundes und der L\"ander - GZ 2047/1, Projekt-ID 390685813.}
 
 \runauthor{N.~Bou-Rabee, A.~Eberle}
 \end{aug}

 \begin{abstract}
 We derive non-asymptotic quantitative bounds for convergence to 
 equilibrium of the exact preconditioned Hamiltonian Monte Carlo algorithm (pHMC) 
 on a Hilbert space. As a consequence, explicit and dimension-free bounds for pHMC applied to high-dimensional distributions arising in transition path sampling and path integral molecular dynamics are given.  Global convexity of the underlying potential energies is not required. Our results are based on a two-scale coupling which 
 is contractive in a carefully designed distance. 
 \end{abstract}

\begin{keyword}[class=MSC]
\kwd[Primary ]{60J05}
\kwd[; Secondary ]{60B12, 60J22, 65P10, 65C05, 65C40}
\end{keyword}


\begin{keyword}
\kwd{coupling}
\kwd{convergence to equilibrium}
\kwd{Markov Chain Monte Carlo in infinite dimensions}
\kwd{Hamiltonian Monte Carlo}
\kwd{Hybrid Monte Carlo}
\kwd{geometric integration}
\kwd{Metropolis-Hastings}
\kwd{Hilbert spaces}
\end{keyword}

\maketitle


\section{Introduction}


Hamiltonian or Hybrid Monte Carlo (HMC) methods are a class of Markov Chain Monte
Carlo (MCMC) methods originating in statistical physics \cite{DuKePeRo1987} which have become 
increasingly popular in various application areas \cite{Li2008,mathias2010free,neal2011mcmc,BoSaActaN2018,prokhorenko2018large}. Their success is in particular due to empirically observed 
convergence acceleration compared to more traditional, random-walk based methods. 
The basic idea in HMC is to define an MCMC method with the help of an artificial Hamiltonian dynamics whose {\em only} purpose is to accelerate convergence to equilibrium. This Hamiltonian dynamics is designed to leave invariant a product of the target measure and a fictitious Gaussian measure in an artifical velocity variable. 
First rigorous theoretical results supporting the empirical evidence have
only been established recently. In particular, geometric ergodicity has been verified
in \cite{BoSa2016,DurmusMoulinesSaksman,Livingstone}, and quantitative convergence bounds have
been derived in the strongly convex case in \cite{mangoubi2017rapid},
and under more general assumptions in \cite{BoEbZi2018}, both by applying coupling methods. \smallskip

Since many applications are high dimensional, a key issue is to understand the 
dependence of the convergence bounds on the dimension. Here, we study the problem of dimension dependence for a special class of models that is relevant for several important applications including Path Integral Molecular Dynamics (PIMD) \cite{ChandlerWolynes,
  Miller2005,Miller2005a,Craig2004,Craig2005,Craig2005a,Habershon2008,
  Habershon2013,Lu2018,KoBoMi2019}, Transition Path
Sampling (TPS) \cite{ReVa2005,pinski2010transition,Bolhuis2002,Miller2007}, and Bayesian inverse problems \cite{kaipio2005statistical,stuart2010inverse,dashti2017bayesian,borggaard2018bayesian}. 
For the class of models we consider, a corresponding HMC Markov chain
relying on a preconditioned Hamiltonian dynamics
can be defined directly on the infinite dimensional state space
\cite{BePiSaSt2011}. This suggests that one might hope for
dimension-free convergence bounds for the corresponding Markov chains
on finite-dimensional discretizations of the state space.
Corresponding dimension-free convergence rates to equilibrium have been
established for the preconditioned Crank-Nicholson (pCN) algorithm
\cite{hairer2014spectral} and for the Metropolis-adjusted Langevin
algorithm (MALA) \cite{eberle2014}, but a corresponding result for 
HMC is not known so far.\smallskip

The goal of this paper is to fill this gap. To this end we extend the
coupling approach developed for HMC in the finite dimensional case in \cite{BoEbZi2018}, and combine it with a two-scale coupling approach
for stochastic dynamics on infinite dimensional Hilbert spaces that
originates in \cite{Ma1998,mattingly2003recent,hairer2002exponential,hairer2008spectral} and has been further developed in \cite{Zi2017}. The splitting 
into ``low modes'' and ``high modes'' in the two-scale coupling can be traced back to contraction results for the stochastic Navier-Stokes equations \cite{Ma1998}, and analogous results in the deterministic setting \cite{foias1967comportement}; see \cite{mattingly2003recent} for a detailed review.  
\smallskip

Our object of study is the \emph{exact preconditioned HMC algorithm} (pHMC) with fixed 
durations on a Hilbert space, i.e., 
the (preconditioned) Hamiltonian dynamics is exactly integrated (or, in practical terms, the integration is carried out with 
very small step sizes). 
Here, preconditioning corresponds to an appropriate choice of the 
kinetic energy 
  which involves picking the mass operator equal to the stiffness operator (or inverse covariance) associated to the Gaussian reference measure of the target probability measure.  This choice of kinetic energy 
  ensures that the corresponding
  pHMC algorithm is more amenable to numerical approximation and Metropolis-adjustment than HMC without preconditioning \cite{BePiSaSt2011, BoSaActaN2018}.\smallskip
  
We prove that the transition kernel of the Markov chain
induced by the pHMC algorithm
is contracting in a suitable Wasserstein/Kantorovich metric with an explicit contraction rate.  This rate depends on the duration of the Hamiltonian flow, the eigenvalues of the covariance operator of the Gaussian reference measure, and the regularity of the preconditioned Hamiltonian dynamics. The results are given in a more general setting that includes pHMC as a special case, and also covers other types of dynamics and preconditioning strategies. As a 
consequence of our general results, we derive dimension-free  bounds for pHMC applied to  finite-dimensional approximations arising in TPS and PIMD.\smallskip

Before stating our results in detail, we conclude with a brief outlook.
The results below apply only to pHMC with exact integration of the Hamiltonian
dynamics.
In practice, the Hamiltonian dynamics is numerically approximated, to obtain {\em numerical} versions of pHMC that are implementable on a computer. The time integrator of choice for pHMC is the symmetric splitting integrator introduced in \cite{BePiSaSt2011}.  Unlike other splittings for the Hamiltionian dynamics, this approximation has an acceptance rate that is uniform with respect to the spatial step size associated with the discretization of the Hilbert space \cite[\S 8]{BoSaActaN2018}. Time discretization
creates a bias in the invariant measure that can be avoided by a 
Metropolis adjustment \cite{tierney1998note,BoSaActaN2018}. We would expect that for unadjusted numerical HMC 
based on the integrator proposed in \cite{BePiSaSt2011}, 
similar contraction results as stated below hold if the
time step size is chosen sufficiently small (but independently of the 
dimension). Under additional regularity assumptions, one could also
hope for dimension free bounds for the Metropolis adjusted version.
First steps in this direction are carried out in \cite[\S 2.5.4]{BoEbZi2018} in the finite dimensional case, and in \cite[\S 4]{PidstrigachMT}
in a strongly convex infinite dimensional case,
but a full study in the general case would be lengthy and go beyond 
the scope of the current work.\smallskip



Alternatively to preconditioning, it is also possible (but more tricky) to
implement
non-preconditioned HMC, which corresponds to injecting white noise in the 
velocity variable. In this case, 
the corresponding Hamiltonian dynamics is highly oscillatory in high modes \cite{petzold_jay_yen_1997}.
Therefore, convergence bounds for exact HMC 
without preconditioning on an infinite dimensional Hilbert space can be
expected to hold only if the durations are randomized \cite{neal2011mcmc},
and in numerical implementations, strongly stable integrators \cite{KoBoMi2019} have to be used in order to be able to choose the step size
independently of the dimension. Furthermore, scaling limit results show
that for Metropolis adjusted HMC applied to i.i.d.\ product measures
on high dimensional state spaces, the step size has to be chosen of 
order $O(d^{-1/4})$ to avoid degeneracy of the acceptance probabilities
\cite{gupta1988tuning,kennedy1991acceptances,BePiRoSaSt2013}.

\smallskip

We now state our main results in Section \ref{sec:main_results},
and consider applications to TPS and PIMD in Section \ref{sec:applications}.
The remaining sections contain the proofs of all results.

\section{Main results} \label{sec:main_results}


Let $\H$ be a separable and real Hilbert space with inner product $\langle \cdot, \cdot \rangle$ and norm $\norm{\cdot}$. Let $\mathcal{C}: \H \to \H$ be a positive compact symmetric linear operator.
By the spectral theorem, the eigenfunctions $\{ e_i \}_{i \in \mathbb{N}}$ of
  $\mathcal{C}$ form a complete orthonormal basis of $\H$ with corresponding eigenvalues 
  $\{ \lambda_i \}_{i \in \mathbb{N}}$ which we arrange in descending order, i.e., $\lambda_1 \ge \lambda_2 \ge \cdots$.   The positivity condition means that $\lambda_j>0$ for all $j \in \mathbb{N}$, and by compactness, if $\text{dim}(\H )=\infty$ then $\lim_{j\to\infty}\lambda_j=0$.  Any function $x \in \H$ can be represented in spectral coordinates by the  expansion \begin{equation}
  x = \sum x_j e_j \quad \text{where $x_j := \langle x, e_j \rangle$.}
  \end{equation}
Moreover, for all $s\in\mathbb R$, the operator $\mathcal{C}^s$ is defined via the spectral decomposition of $\mathcal{C}$. We introduce the family of inner products and norms given by \begin{equation} \label{eq:Hs_norm}
\langle x,y\rangle_s:= \langle  x, \mathcal{C}^{-s} y \rangle = \langle \mathcal{C}^{-s/2} x, \mathcal{C}^{-s/2} y \rangle \;, \quad 
\snorm{x}:=\langle x,x\rangle_s^{1/2} 
\end{equation}
for $x,y\in \H^s$. Here for $s\ge 0$, $\H^s$ denotes the Hilbert space consisting of all
$ x \in \H $ with $\snorm{x} < \infty $,
whereas for $s<0$, $\H^s$ is the completion of $\H$ w.r.t.\ $\snorm{x}$.
Note that $\H=\H^0$, and for $s>0$, $\H^s \subset \H \subset \H^{-s}$. Furthermore,
the linear operator $\mathcal{C}$ restricts or extends (depending on whether $s>0$ or $s<0$) to a linear isometry from $\H^s$ to $\H^{s+2}$ which will again be denoted by $\mathcal{C}$.\medskip

We will now introduce the pHMC method for 
approximate sampling from a probability measure $\mu$ that has a density 
w.r.t.\ a Gaussian measure $\mu_0$ on one of the Hilbert spaces $\H^s$.
Afterwards, in Section \ref{sec:main_results:assumptions}, we will introduce
a more general family of Markov chains on Hilbert spaces that includes the Markov chain associated to
pHMC as a special case. In Section \ref{sec:main_results:couplings},
we introduce a new coupling for these Markov chains that combines 
ideas from \cite{BoEbZi2018} and \cite{Zi2017}. Then in Sections \ref{sec:main_results:contraction} and \ref{sec:QB}, we state our main 
contraction result for these couplings, and derive
quantitative error bounds.

\subsection{Exact Preconditioned Hamiltonian Monte Carlo} \label{sec:main_results:pHMC}

Let $\mu_0=\mathcal{N}(0, \mathcal{C})$ denote the centered Gaussian measure whose covariance operator w.r.t.\ the inner product $\langle \cdot, \cdot \rangle$ is $\mathcal{C}$ \cite{bogachev1998gaussian}. If $\mathcal{C}$ is trace class then $\mu_0$ is
supported on $\H$. More generally,
we fix $s\in (-\infty ,1)$ and assume that $\mu_0$ is supported on the corresponding Hilbert space $\H^s$. This is ensured by the following assumption:
\begin{assumption} 
\label{A456}
The operator $\mathcal{C}^{1-s}$ is trace class, i.e., $$\tr(\mathcal{C}^{1-s}) = \sum_{j=1}^{\infty} \lambda_j^{1-s} <\infty .$$ \end{assumption} 
A realization $\xi$ from $\mu_0$ can be generated using the expansion 
\[
\xi = \sum_{j=1}^{\infty} \sqrt{\lambda_j} \rho_j e_j \;, \quad \{ \rho_i \} \overset{\text{i.i.d.}}{\sim} \mathcal{N}(0,1) \;.
\] 
For $\xi \sim \mu_0$, Assumption \ref{A456} implies $\E \snorm{\xi}^2 = \tr(\mathcal{C}^{1-s})   < \infty$, and thus, $\xi$ is indeed a Gaussian random variable on $\H^s$.
\begin{remark} To avoid confusion, we stress that the covariance operator of a Gaussian measure
is a non-intrinsic object that depends on the choice of an inner product. In particular,
the covariance operator of $\mu_0$ w.r.t.\ the $\H^s$ inner product is $\mathcal{C}^{1-s}$.  Nonetheless, in what follows, we always define the covariance operator with respect to the $\H$ inner product, and in this sense, the measure $\mu_0$ has covariance operator $\mathcal{C}$. 
\end{remark}

\emph{Exact preconditioned Hamiltonian Monte Carlo (pHMC)} is an MCMC method for approximate sampling
from probability distributions on a Hilbert space that have the general form \begin{equation} \label{eq:mu}
\mu(dx) \propto \exp(- U(x) ) \mu_0(dx) \;, \quad \mu_0 = \mathcal{N}(0, \mathcal{C}) \;,
\end{equation}
where $U$ is a function on a Hilbert space on which the Gaussian measure $\mu_0$
is supported.
The pHMC method generates a Markov chain on this Hilbert space with transition step 
 \begin{eqnarray}
 x\mapsto X'(x) \quad \text{where} \quad	\label{*} X'(x) &=& q_T(x,\xi) \;.
 	\end{eqnarray}
Here $\xi\sim\mathcal N(0,\mathcal C)$, and the duration
$T:\Omega\rightarrow \R_+$ is in general an independent random variable with a given distribution
$\nu$ (e.g.\ $\nu=\delta_r$ or $\nu=\text{Exp}(\lambda^{-1})$).   We will only consider the case where $T\in (0,\infty)$ is a given deterministic constant. Moreover,
\begin{eqnarray*}
	\phi_t(x,v) &=& \left(q_t(x,v), v_t(x,v)\right)\qquad ( t\in [0,\infty ))
\end{eqnarray*}
is the exact flow of the Hilbert space valued ODE given by
\begin{eqnarray}\label{eq:dynamics}
	\frac{d}{dt} q_t  \ = \ v_t, \quad \frac{d}{dt} v_t \ = \ -q_t-\mathcal CDU(q_t),\quad
	\left(q_0(x,v),v_0(x,v)\right) \ = \ (x,v).
\end{eqnarray}
Formally, \eqref{eq:dynamics} is a preconditioned Hamiltonian dynamics for the Hamiltonian $$H(x,v)=U(x)+
\langle x, \mathcal{C}^{-1}x\rangle/2+\langle v, \mathcal{C}^{-1}v\rangle /2 ,$$
where the covariance operator $\mathcal{C}$ is used for preconditioning.
A key property of \eqref{eq:dynamics} is that it leaves invariant the probability measure
\begin{equation} \label{eq:hatmu}
\hat{\mu}(dx \; dv)\ \propto\ \exp(- U(x) )\, \mathcal{N}(0, \mathcal{C})(dx)\, \mathcal{N}(0, \mathcal{C})(dv) \;,
\end{equation}
on phase space, and in turn, this implies that the transition kernel of pHMC defined by $\pi(x,B) = \P[X'(x) \in B]$ leaves $\mu$ in \eqref{eq:mu} invariant \cite{BePiSaSt2011}. 
\medskip

Below, our key assumption in this setup will be that $U$
is a gradient Lipschitz function on the Hilbert space $\H^s$
where the reference measure $\mu_0$ is supported:

 \begin{assumption}\label{MU123}
The target measure $\mu$ 
is a probability measure on $\H^s$ that is absolutely continuous with respect to $\mu_0$. The relative density is proportional to
$\exp(-U)$ where $U:\mathcal H^s\to [0,\infty )$ is a Fr\`echet differentiable function
satisfying the gradient Lipschitz condition
$$\left|\partial_hU(x)-\partial_hU(y)\right|\ \le\ L_g\, \left|x-y\right|_s\, \left|h\right|_s\qquad\text{for all }x,y,h\in\H^s$$
for some finite positive constant $L_g$.
\end{assumption}

In Assumption \ref{MU123}, $\partial_h U$ denotes the directional derivative of $U$ in direction $h$. We also use the notation $DU$ to denote the differential of $U$, i.e., $(DU)(x)$ is the linear functional on $\H^s$ defined by
$(DU)(x)[h]=(\partial_hU)(x)$. Identifying the dual space of $\H^s$ with $\H^{-s}$, Assumption \ref{MU123} shows that we can view $D U$ as a Lipschitz continuous function from $\H^s$ to $\H^{-s}$, i.e.,
\begin{equation}\label{eq:gradlip}
    \left|DU(x)-DU(y)\right|_{-s}\ \le\ L_g \, \left|x-y\right|_s\qquad\text{for all }x,y\in\H^s.
\end{equation}
Recalling that $\mathcal C$ is an isometry from $\H^{-s}$ to $\H^{2-s}$,
and $\H^{2-s}$ is continuously embedded into $\H^s$ for $s<1$, we see that Assumption \ref{MU123} implies that the drift function
\begin{equation} \label{eq:b}
b(x):= - x- \mathcal{C} D U(x) 
\end{equation}
occurring in \eqref{eq:dynamics} is a Lipschitz continuous map from $\H^s$ to $\H^s$.

\begin{remark}
 The global Lipschitz condition in Assumption~\ref{MU123} is principally the same as Condition~3.2 in \cite{BePiSaSt2011} except here the domain $\H^s$ of the potential energy is defined in terms of the covariance operator itself rather than in terms of an auxiliary operator with related eigenfunctions and eigenvalues.
\end{remark}




\subsection{General setting} \label{sec:main_results:assumptions} 

We now introduce a more general setup that includes the Markov chain induced by pHMC as a special case. We fix $s\in\mathbb R$, and we assume that $b: \H^{s} \to \H^{s}$
is a Lipschitz continuous function. Let $
	\phi_t(x,v) = \left(q_t(x,v), v_t(x,v)\right)$
denote the exact flow of the Hilbert space valued ODE given by
\begin{eqnarray}\label{eq:dynamicsgeneral}
	\frac{d}{dt} q_t  \ = \ v_t, \quad \frac{d}{dt} v_t \ = \ b(q_t),\quad
	\left(q_0(x,v),v_0(x,v)\right) \ = \ (x,v).
\end{eqnarray}
As above, we fix a constant duration $T\in (0,\infty )$ and consider the Markov chain on $\H^s$ with transition step 
 \begin{equation}
    \label{eq:transitionstep}
 x\ \mapsto\ X'(x)\ :=\ q_T(x,\xi) \;,\qquad \xi\sim\mathcal N(0, \tilde{\mathcal{C}})
 \end{equation}
 where $\tilde{\mathcal{C}}$ is a linear operator on $\H$ with the same eigenfunctions as $\mathcal C$.
 
 \begin{assumption} 
\label{C123}
$\tilde{\mathcal{C}}: \H \to \H$ is a symmetric linear operator with eigenfunctions $\{e_i\}_{i \in \mathbb{N}}$ and corresponding eigenvalues $\{\tilde{\lambda}_i\}_{i \in \mathbb{N}}$. Moreover, 
the operator $\tilde{\mathcal{C}} \mathcal{C}^{-s}$ is trace class, i.e.,  $$\tr(\tilde{\mathcal{C}} \mathcal{C}^{-s}) = \sum_{j=1}^{\infty} \tilde{\lambda}_j \lambda_j^{-s} <\infty .$$  
\end{assumption}
 
 For $\xi\sim\mathcal N(0, \tilde{\mathcal{C}})$, this implies $\E\snorm{\xi}^2 = \tr(\tilde{\mathcal{C}} \mathcal{C}^{-s}) < \infty$, and thus, $\xi$ in \eqref{eq:transitionstep} is a Gaussian random variable on $\H^s$.  Let $\pi(x,B) = \P[X'(x) \in B]$ denote the corresponding transition kernel.  In particular, in the case where $b$ is given by \eqref{eq:b} and $\tilde C=C$, we recover the Markov chain associated to pHMC.\medskip

Our main result rests on the assumption that the Hilbert space $\H^s$ can be split
into a finite dimensional subspace $\H^{s,\ell}$ (``the low modes'') and its orthogonal
complement $\H^{s,h}$ (``the high modes'') such that $b(x)$ is close to
a linear map on $\H^{s,h}$. More precisely,
fix $n \in \mathbb{N}$. Let $\H^{s,\ell} := \operatorname{span}\{e_1, \dots, e_n\}$, and let $\H^{s,h}$ denote its orthogonal complement, i.e., $\H^{s,h}$ is the closure in $\H^s$ of $ \operatorname{span}\{e_{n+1},e_{n+2}, \dots\} $.
Thus $\H^s = \H^{s,\ell} \oplus \H^{s,h}$.  For any $x \in \H^s$, we denote by $x^{\ell}$ and $x^h$ the orthogonal projections onto $\H^{s,\ell}$ and $\H^{s,h}$, respectively.

 \begin{assumption}\label{B123}
 $b$ is a function from $\H^s$ to $\H^s$ such that $b(0)=0$. Moreover it satisfies the following conditions:
 	\begin{itemize}
 		\item[(B1)] There exists $\L \in [1,\infty )$  such that 
 		\begin{equation}
  \snorm{b(x) - b(y)} \ \le\ \L \snorm{x - y} \quad \text{for all $x, y \in \H^{s}$.}  \label{eq:B1}		    	\end{equation}
 	\item[(B2)] There exists $n \in \mathbb{N}$ such that \begin{equation}
 	\snorm{b^h(x) - b^h(y) +   x^h  - y^h } \ \le\ \frac{1}{3} \snorm{x-y} \quad \text{for all $x, y \in \H^{s}$.}  \label{eq:B2}	 
 	\end{equation}
 	\item[(B3)]   There exist $K>0$ and $A \ge 0$ such that \begin{eqnarray} \label{eq:B3}
 	\langle x, b(x) \rangle_{s} &\le& - K \snorm{x}^2 + A \quad \text{for any $x \in \H^{s}$.}
 	\end{eqnarray}
  	\end{itemize}
 	 \end{assumption}
Condition (B1) is a global Lipschitz condition. Since $b(0)=0$, it implies the linear growth condition $\snorm{b(x)} \le \L \snorm{x}$, and this condition and (B3) imply that $K \le \L$. Condition (B2)
says that in the high modes, $b(x)$ behaves essentially as
a linear drift. Finally, Condition
(B3) is a standard drift condition which implies that the Markov chain has a Foster-Lyapunov function. It is similar
to other conditions in the literature that consider Markov processes on unbounded spaces based on second-order dynamical systems including Hypothesis~(H2) in \cite{BoSa2016}, Equation (13) of \cite{SaSt1999}, Hypothesis 1.1 in \cite{Ta2002}, Condition 3.1 in \cite{MaStHi2002}, and Assumption 1.2 in \cite{eberle2019couplings}.

\begin{lemma}[Foster-Lyapunov function] \label{lem:LYAP}
Suppose that Assumptions \ref{C123} and \ref{B123} hold. Then for any $T>0$ satisfying $
\L T^2 \le \frac{1}{48} \frac{K}{\L}$ {we have}
  \[
\E \left[ \snorm{X'(x)}^2 \right] \le \left( 1- \frac{K T^2}{2} \right) \snorm{x}^2 + 5 ( A + \tr(\tilde{\mathcal{C}} \mathcal{C}^{-s}) ) T^2 \quad \text{for all $x \in \H^s$.}
\]
\end{lemma}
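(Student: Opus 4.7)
The plan is to exploit the integral form of the dynamics. Integrating \eqref{eq:dynamicsgeneral} twice gives
\[
q_T \;=\; x + T\xi + R, \qquad R := \int_0^T (T-s)\,b(q_s)\, ds.
\]
Expanding $\snorm{q_T}^2$ and taking expectations, using $\E\xi = 0$ and $\E\snorm{\xi}^2 = \tr(\tilde{\mathcal{C}}\mathcal{C}^{-s})$, yields
\[
\E\snorm{q_T}^2 \;=\; \snorm{x}^2 + T^2\tr(\tilde{\mathcal{C}}\mathcal{C}^{-s}) + 2\,\E\langle x,R\rangle_s + 2T\,\E\langle \xi,R\rangle_s + \E\snorm{R}^2.
\]
The whole contraction must come out of $2\,\E\langle x,R\rangle_s$, because $R = O(T^2)$; the remaining two stochastic terms are lower-order perturbations to be absorbed using the smallness of $\L T^2$.

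Before attacking the main term, I would establish a priori bounds on $q_s$ for $s \in [0,T]$ by applying Gr\"onwall to the integral equation. Condition (B1) together with $b(0)=0$ yields the linear growth bound $\snorm{b(q_s)} \le \L\snorm{q_s}$, and hence $\sup_{s\le T}\snorm{q_s} \le 2(\snorm{x} + T\snorm{\xi})$ whenever $\L T^2 \le 1$, which is implied by the hypothesis since $K\le \L$. This in turn gives $\snorm{q_s - x} \le 2T\snorm{\xi} + \L T^2\snorm{x}$ for $s \le T$, and $\snorm{R}\le \L T^2(\snorm{x}+T\snorm{\xi})$. With these in hand, $\E\snorm{R}^2$ and $2T\,\E\langle\xi,R\rangle_s$ are immediately controlled via Cauchy--Schwarz and Young's inequality.

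For the key term, I would split $b(q_s) = b(x) + [b(q_s)-b(x)]$. The constant part integrates to exactly $T^2\langle x,b(x)\rangle_s$, which by (B3) is at most $-KT^2\snorm{x}^2 + AT^2$ --- this is where the contraction enters. The Lipschitz remainder is controlled through (B1) and the bound on $\snorm{q_s-x}$ above, and after applying Young's inequality to decouple cross terms of the form $\snorm{x}\snorm{\xi}$, the pieces assemble into an estimate
\[
\E\snorm{q_T}^2 \;\le\; \bigl(1-\alpha K T^2\bigr)\snorm{x}^2 + \beta\bigl(A+\tr(\tilde{\mathcal{C}}\mathcal{C}^{-s})\bigr)T^2
\]
with explicit $\alpha, \beta > 0$. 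The main obstacle is purely quantitative: one must verify that the threshold $\L^2 T^2 \le K/48$ is simultaneously sharp enough to guarantee $\alpha \ge 1/2$ and $\beta \le 5$. The numerical constant $1/48$ is tuned so that the three perturbation contributions --- from $\E\snorm{R}^2$, from $2T\,\E\langle\xi,R\rangle_s$, and from the Lipschitz remainder in $2\,\E\langle x,R\rangle_s$ --- collectively consume less than half of the available $KT^2\snorm{x}^2$ budget, while leaving an additive contribution bounded by $5T^2\,(A+\tr(\tilde{\mathcal{C}}\mathcal{C}^{-s}))$. Carrying out this bookkeeping is the only real labor in the argument.
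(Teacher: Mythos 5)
Your proposal is correct, and it follows the same basic strategy as the paper's proof: a second-order expansion in $T$ in which the drift condition (B3) supplies the contraction at order $T^2$, while (B1) together with a priori trajectory bounds (the paper's Lemma \ref{0A}, your Gr\"onwall step) controls all remainders, and the threshold $\L^2 T^2\le K/48$ closes the bookkeeping. The one genuine organizational difference is where the expansion is performed and where (B3) is applied. You expand the position, $q_T=x+T\xi+R$, square, and apply (B3) only at the initial point $x$, Lipschitz-controlling $b(q_s)-b(x)$; the paper instead Taylor-expands the scalar $\varrho(t)=\snorm{q_t}^2$ via the identity \eqref{eq:byparts}, applies (B3) along the whole trajectory in the form $\langle q_r,b(q_r)\rangle_s\le -K\varrho(r)+A$, and then uses a second integration by parts plus an a priori bound on $\sup_r\snorm{v_r}^2$ to return to $\varrho(0)$. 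Your route trades the paper's velocity bound \eqref{eq:AP2} for the extra cross terms $\E\langle\xi,R\rangle_s$, $\E\snorm{R}^2$ and the Lipschitz remainder in $\E\langle x,R\rangle_s$; both versions of the bookkeeping close comfortably under the stated hypothesis, so either write-up is acceptable.
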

The proof of this lemma is given in Section \ref{sec:LYAPproof}.

\begin{example}[pHMC] \label{ex:U_Lg}
Suppose that $b$ is given by \eqref{eq:b} and $U$ satisfies the global Lipschitz condition in Assumption~\ref{MU123} with Lipschitz constant $L_g$. Then condition (B1) holds with Lipschitz constant $\L = 1 + \lambda_1^{1-s} L_g$ and Condition (B2) holds  with $n=\inf\{ k \in \mathbb{N} : \lambda_{k+1}^{1-s} < 1/(3 L_g) \}$.  Indeed, by
\eqref{eq:gradlip},
\begin{align*}
&\snorm{b(x) - b(y)} 
 \le \snorm{x-y} + \norm{\mathcal{C}^{1-s}  ( DU(x) - D U(y) )}_{-s}  \le ( 1 + \lambda_1^{1-s} L_g ) \snorm{x-y},
\end{align*}
and $\snorm{b^h(x) -  b^h(y) + x^h - y^h}\le \lambda_{n+1}^{1-s} L_g \snorm{x - y} \le (1/3) \snorm{x- y}$ as required. Moreover, the
drift condition (B3) can be verified in examples, see Section 
\ref{sec:applications}.
\end{example}

\subsection{Two-scale coupling}\label{sec:main_results:couplings}
We now introduce a coupling for the transition steps of two copies of the Markov chain starting at different initial conditions $x$ and $y$. 
We use a synchronous coupling of the high modes in $\H^{s,h}$ and a different coupling for the low modes in $\H^{s,\ell}$ that together enable us to derive a weak form of contractivity.  
Note that the covariance operator $\tilde C$ has a bounded inverse 
on the finite dimensional subspace $\H^\ell$. Therefore,
for $\mathsf{h}\in \H^\ell$, the Gaussian measure $\mathcal{N}(\mathsf{h}, \tilde{\mathcal{C}})$ is absolutely continuous w.r.t.\ $\mathcal{N}(0, \tilde{\mathcal{C}})$ with relative density
\begin{equation}\label{rhoh}
  \rho_{\mathsf{h}}(x)\ =\ \exp  \left( \langle \tilde{\mathcal{C}}^{-1} \mathsf{h}, x \rangle -  \langle \tilde{\mathcal{C}}^{-1} \mathsf{h}, \mathsf{h} \rangle /2\right) .
\end{equation}
Let $\gamma>0$ be a positive constant. The precise value of the parameter 
$\gamma$ will be chosen in an appropriate way below.
The coupling transition step is given by $(x,y)\mapsto (X'(x,y),Y'(x,y))$ where
\begin{equation}\label{**}
	X'(x,y) \ = \ q_T(x,\xi) \;, \quad \text{and} \quad
	Y'(x,y) \ = \ q_T(y,\eta) 
\end{equation} 
with $\xi \sim \mathcal{N}(0, \tilde{\mathcal{C}})$ and $\eta$ defined in high/low components as $\eta^h \ :=  \ \xi^h$ and 
\begin{eqnarray}
	\label{eta}
	\eta^{\ell} &:=& \begin{cases}
		\xi^{\ell}  \ + \ \gamma  z^{\ell}  & \text{if } \ \mathcal U \ \leq \ 
		\rho_{-\gamma z^\ell}
		(\xi^{\ell})
		 ,
		\\
		\mathcal{R} \xi^{\ell}  & \text{otherwise}.
		\end{cases}
\end{eqnarray}
Here $\mathcal U\sim\text{Unif}(0,1)$ is
independent of $\xi$, $z:=x-y$, and the reflection operator $\mathcal{R}$ is defined by
\begin{equation}
    \label{refl}
    \mathcal{R} \ :=\ \tilde{\mathcal{C}}^{1/2} (I - 2 e^{\ell} \langle e^{\ell}, \cdot \rangle ) \tilde{\mathcal{C}}^{-1/2},\quad\text{where}\quad e^{\ell}\ :=\ \tilde{\mathcal{C}}^{-1/2} z^{\ell}/\norm{\tilde{\mathcal{C}}^{-1/2} z^{\ell}}.
\end{equation}
Due to Assumption~\ref{B123} (B2), the component in $\H^{s,h}$ of the resulting coupled dynamics is contracting in a finite time interval as a result of the linear part of the drift in \eqref{eq:dynamicsgeneral}.
Moreover, the coupling of the components of the initial velocities  
in $\H^{s,\ell}$ is similar to the coupling in \cite{BoEbZi2018} which is inspired by a related coupling for second order Langevin diffusions \cite{eberle2019couplings}. It is defined
in such a way that $\xi^{\ell}-\eta^{\ell}=-\gamma z^{\ell}$ occurs with
the maximal possible probability.
 As illustrated in Figure~\ref{fig:coupling}, and proven later in Lemma~\ref{lem:contr}, the reason for this choice is that the projection of the difference process on $\H^{s,\ell}$, i.e., $q_t^{\ell}(x,\xi)-q_t^{\ell}(y,\eta)$, is contracting in a finite time interval if the difference $\xi^{\ell}-\eta^{\ell}$ of the initial velocities is negatively proportional to the difference of the initial positions $x^{\ell}-y^{\ell}$. Note that
 if $b(x)= 0$ or $b(x)=-x$ then the optimal choices of $\gamma$ would be $\gamma=T^{-1}$ and $\gamma=\cot(T)$, respectively, because for these choices, $X'(x,y)=Y'(x,y)$ if $ \mathcal U \leq 
		\rho_{-\gamma z^\ell}(\xi^{\ell})$.
 In the case where $\xi^{\ell}-\eta^{\ell}\neq -\gamma z^{\ell}$, a 
 reflection coupling is applied. The corresponding reflection $\mathcal{R}$ is an orthogonal transformation w.r.t.\ the inner product
 $\langle x,y\rangle_{\tilde{\mathcal C}}=\langle \tilde{\mathcal{C}}^{-1/2}x,\tilde{\mathcal{C}}^{-1/2}y\rangle$
 induced by the covariance operator $\tilde{\mathcal{C}}$ on $\H^\ell$.
 \medskip

In order to verify that $(X'(x,y),Y'(x,y))$ is indeed a coupling of the transition probabilities $\pi (x,\cdot )$ and $\pi (y,\cdot ) $, we remark that the distribution of $\eta$ is 
$\mathcal{N}(0,\tilde{\mathcal{C}})$ since, by definition of $\eta^{\ell}$ in \eqref{eta} and a change of variables, \begin{eqnarray*}
\lefteqn{P[ \eta^{\ell} \in B ]}\\
&  =& E \left[ I_B( \xi^{\ell}+\gamma z^{\ell})\,  
\rho_{-\gamma z^\ell}
(\xi^{\ell}) \wedge 1 \right] \, +\, E \left[ I_B( \mathcal{{R}} \xi^{\ell} ) \left( 1 -   
\rho_{-\gamma z^\ell}
(\xi^{\ell} ) \right)^+ \right] \\
&  =& E \left[\rho_{\gamma z^\ell}
(\xi^{\ell})  I_B( \xi^{\ell})  
\rho_{-\gamma z^\ell}
(\xi^{\ell}-\gamma z^{\ell}) \wedge 1 \right] + E \left[ I_B(  \xi^{\ell} ) \left( 1 -   
\rho_{-\gamma z^\ell}
(\mathcal{{R}} \xi^{\ell} ) \right)^+ \right] \\
&=& E \left[ I_B( \xi^{\ell})\,  1\wedge 
\rho_{\gamma z^\ell}
(\xi^{\ell})   \right] \, +\, E \left[ I_B( \xi^{\ell} ) \left( 1 -  
\rho_{\gamma z^\ell}
(\xi^{\ell})   \right)^+ \right]  \ = \ P[\xi^{\ell} \in B] 
\end{eqnarray*}
for any measurable set $B$. Here $a \wedge b$ denotes the minimum of real numbers $a$ and $b$, $I_B(\cdot)$ denotes the indicator function for the set $B$, and we have used that
$\mathcal N(0,\tilde{\mathcal{C}})$ is invariant under the reflection $ \mathcal{R}$, $\mathcal{R} z^{\ell}= - z^{\ell}$, and 
by \eqref{rhoh},
$ \rho_{-\mathsf{h}}(x-\mathsf{h}) \rho_{\mathsf{h}}(x)=1$.  A similar calculation shows that
\begin{equation} \label{eq:TVnormal}
P[\eta^{\ell}\neq \xi^{\ell} +\gamma z^{\ell}]\ = \
E \left[  \left( 1 -   
\rho_{-\gamma z^\ell}
(\xi^{\ell} ) \right)^+ \right]
\ =\ d_{\TV}(\mathcal{N}(0,  \tilde{\mathcal{C}}) , \mathcal{N}(\gamma z^{\ell}, \tilde{\mathcal{C}}) )
\end{equation} where $d_{\TV}$ is the total variation distance.  Hence, by the coupling characterization of the total variation distance, $\eta^{\ell} = \xi^{\ell} + \gamma z^{\ell}$ does indeed hold with the maximal possible probability.  Note that if $z$ is not in the 
reproducing kernel Hilbert space of the covariance operator 
$ \tilde{\mathcal{C}}$ then the probability of the event $\eta^{\ell} \ne \xi^{\ell} + \gamma z^{\ell}$ in \eqref{eq:TVnormal} tends to one as the number of low modes increases. This explains why it is
necessary to split the Hilbert space and apply a two-scale coupling.    


\begin{figure}[t]
\centering
\begin{minipage}{\textwidth} 
      \begin{minipage}[b]{0.50\textwidth}
\begin{tikzpicture}[scale=1.0]
\begin{scope}[very thick] 
       \tikzmath{\gamm=0.3; \lamb=1/\gamm; 
       		     \x1 = 0.0; \x2 =1; \y1=1.0; \y2=-0.5;  \u1=\x1+1; \u2=\x2+0;
		     \z1=\x1-\y1; \z2=\x2-\y2;
                      \v1=\y1+(\u1-\x1)+\gamm*(\x1-\y1); 
                      \v2=\y2+(\u2-\x2)+\gamm*(\x2-\y2); 
                      \px1=\x1+\lamb*(\u1-\x1);   \px2=\x2+\lamb*(\u2-\x2); 
                      \py1=\y1+\lamb*(\v1-\y1); \py2=\y2+\lamb*(\v2-\y2); 
                      } 
                      \draw[->,gray,-{Latex[length=4mm]}](\y1,\y2)--(\x1,\x2) node [midway, below, black, scale=1.5]  {$z_i$};
\filldraw[color=black,fill=black] (\x1,\x2) circle (0.1) node [left,black, scale=1.5]  {$x_i$};
\filldraw[color=black,fill=black] (\y1,\y2) circle (0.1) node [below,black, scale=1.5]  {$y_i$};
\draw[->,-{Latex[length=2mm]}](\x1,\x2)--(\u1,\u2) node [above,black, scale=1.5] {$\xi_i$};
\draw[->,-{Latex[length=2mm]}](\y1,\y2)--(\v1,\v2) node [right,black, scale=1.5]  {$\eta_i = \xi_i+\gamma z_i$};
\draw[-,dotted](\x1,\x2)--(\px1,\px2);
\draw[-,dotted](\y1,\y2)--(\py1,\py2);
\filldraw[color=black,fill=black] (\py1,\py2) circle (0.1);
 \end{scope}
\end{tikzpicture}
\end{minipage}
      \begin{minipage}[b]{0.50\textwidth}
\begin{tikzpicture}[scale=1.0]
\begin{scope}[very thick] 
       \tikzmath{\gamm=0.0;  
       		     \x1 = 0.0; \x2 =1; \y1=1; \y2=-0.5;  \u1=\x1+1; \u2=\x2+0;
		     \z1=\x1-\y1; \z2=\x2-\y2;
                      \v1=\y1+(\u1-\x1)+\gamm*(\x1-\y1); 
                      \v2=\y2+(\u2-\x2)+\gamm*(\x2-\y2); 
                      } 
\filldraw[color=black,fill=black] (\x1,\x2) circle (0.1) node [left,black, scale=1.5]  {$x_i$};
\filldraw[color=black,fill=black] (\y1,\y2) circle (0.1) node [below,black, scale=1.5]  {$y_i$};
\draw[->,-{Latex[length=2mm]}](\x1,\x2)--(\u1,\u2) node [above,black, scale=1.5] {$\xi_i$};
\draw[->,-{Latex[length=2mm]}](\y1,\y2)--(\v1,\v2) node [right,black, scale=1.5]  {$\eta_i = \xi_i$};
 \end{scope}
\end{tikzpicture}
\end{minipage}
\end{minipage}
\begin{minipage}{\textwidth}
      \begin{minipage}[b]{0.48\textwidth}
      \centering
(a) low modes: $1 \le i \le n$
\end{minipage}
\begin{minipage}[b]{0.48\textwidth}
\centering
(b) high modes: $i >n$
\end{minipage}
\end{minipage}
\caption{\small {\bf Two-Scale Coupling.} A diagram illustrating the two-scale coupling in the case $\gamma=T^{-1}$. 
(a) In the low modes $(1 \le i \le n)$, the initial velocities are coupled such that the final positions $q_T^{\ell}(x,\xi) = q_T^{\ell}(y,  \eta)$ are equal when $b \equiv 0$.  (b)  In the high modes $(i>n)$, the initial velocities are synchronously coupled. }
  \label{fig:coupling}
\end{figure}
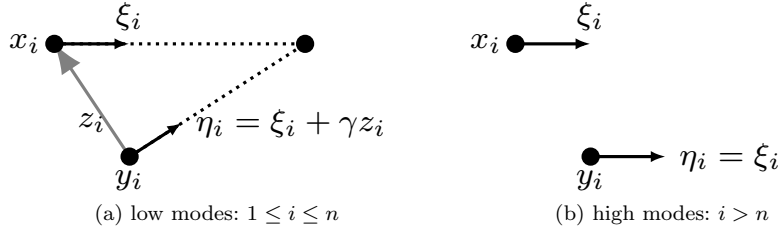

\subsection{Contractivity}  \label{sec:main_results:contraction}
We now state our main contraction bound for the coupling introduced above.
We first define a norm $\anorm{\cdot}$ on $\H^s$ where the high modes are weighted by $\alpha>0$: \begin{equation} \label{eq:alphanorm}
\anorm{x}\ =\  \norm{\tilde{\mathcal{C}}^{-\frac{1}{2}}  x^{\ell}} + \alpha \snorm{x^h}
\ =\  \snorm{\tilde{\mathcal{C}}^{-\frac{1}{2}} \mathcal{C}^{\frac{s}{2}} x^{\ell}} + \alpha \snorm{x^h} \;. 
\end{equation} Let $\sigma_{min} = \min_{1 \le i \le n} \{ \tilde{\lambda}_i^{-1/2} \lambda_i^{s/2}\}$  and 
$\sigma_{max} = \max_{1 \le i \le n} \{ \tilde{\lambda}_i^{-1/2} \lambda_i^{s/2} \}$.
Note that 
\begin{equation}
    \label{compnorm}
    \sigma_{min} \snorm{x^\ell}\ \le\ \snorm{\tilde {\mathcal{C}}^{-1/2} \mathcal{C}^{s/2}x^\ell  }  \ \le\  \sigma_{max}  \snorm{x^\ell} \quad \text{for all $x \in \H^{s}$}.
\end{equation}
Thus
$\anorm{\cdot}$ and $\snorm{ \cdot }$ are equivalent norms  with
\begin{equation} \label{compnormA}
 \min(\sigma_{min}, \alpha) \snorm{x} \le \anorm{x} \le  \sqrt{2} \max(\sigma_{max}, \alpha)  \snorm{x} \quad \text{for all $x \in \H^{s}$}.
\end{equation}

\begin{remark}
If the dimension is infinite then
the operator $\tilde{\mathcal{C}}^{-1} \mathcal{C}^{s}$ is unbounded on $\H^s$, because its inverse is trace class.  Nonetheless, $\anorm{ x }$ is a well-defined norm for any $x \in \H^s$ because the operator $\tilde{\mathcal{C}}^{-\frac{1}{2}} \mathcal{C}^{\frac{s}{2}}$ appearing in $\anorm{ x }$ only acts on the
projection $x^{\ell}$ of $x$ onto the finite dimensional space $\H^{s,\ell}$.
\end{remark}



As we will see below, even when $U$ is non-convex, we can still obtain contractivity with respect to a semimetric $\rho: \H^s \times \H^s \to [0, \infty)$ of the form
\begin{equation}\label{rho}
\rho(x,y) \ =\ \sqrt{ f( \anorm{x-y}) (1+ \epsilon \snorm{x}^2 + \epsilon \snorm{y}^2 ) },\qquad x,y\in\H^s,
\end{equation} 
where $f:[0,\infty )\to [0,\infty )$ is a concave function given by \begin{equation} \label{eq:f}
f(r) = \int_0^r e^{-a t} \ I_{\{ t \le R\}} \ dt \ =\ \frac 1a\left( 1-e^{-a\, r\wedge R}\right) \;,
\end{equation}
and where $R>0$,  $a >0$, and $\epsilon > 0$ are parameters to be specified below.  The semimetric $\rho$ is similar to the one introduced in \cite{BoEbZi2018} in order to prove contractivity 
of the HMC transition step in the finite dimensional case. In general, $\rho$ is not a metric, since the triangle inequality might be violated.   Note that $f$ is non-decreasing, and constant when $r \ge R$.

\begin{remark}
The semimetric \eqref{rho} incorporates, in a multiplicative or weighted way, the quadratic Foster-Lyapunov function for pHMC from Lemma~\ref{lem:LYAP} with weight $\epsilon$.  The idea to use semimetrics of this general form to study contraction properties of Markov processes goes back to \cite{HairerMattinglyScheutzow} and \cite{Butkovsky}; see also \cite{eberle2019quantitative}.
\end{remark}

Lemma~\ref{lem:LYAP} implies that 
the coupling transition $(x,y) \mapsto (X'(x,y), Y'(x,y))$ also has a quadratic Foster-Lyapunov function: If $
\L T^2 \le  \frac{1}{48} \frac{K}{\L}$ then
\[
\E \left[ \snorm{X'(x,y)}^2 + \snorm{Y'(x,y)}^2 \right] \le \left( 1- \frac{K T^2}{2} \right) ( \snorm{x}^2 + \snorm{y}^2 ) + 10 ( A + \tr(\tilde{\mathcal{C}} \mathcal{C}^{-s}) ) T^2 \;.
\]  
We fix a finite, positive constant $R$ satisfying
\begin{equation} \label{ieq:R}
R \ \ge\ 8 \sqrt{40}   ( A + \tr(\tilde{\mathcal{C}} \mathcal{C}^{-s}))^{1/2}\sigma_{max} LK^{-1/2} \;.
\end{equation}
In our main result below, we choose $\alpha :=4\sigma_{max}L$.
In this case, the choice of $R$ in \eqref{ieq:R} guarantees that a strict drift condition \begin{equation} \label{eq:FL_coupling}
\E \left[ \snorm{X'(x,y)}^2 + \snorm{Y'(x,y)}^2 \right]\ \le\ \left( 1- {K T^2}/{4} \right) ( \snorm{x}^2 + \snorm{y}^2 ) 
\end{equation} holds for all $(x,y)$ satisfying $\anorm{x - y}\ge R $, because 
by \eqref{compnormA} and since $L\ge 1$,
\begin{equation}\label{compnormB}
  \anorm{x - y}\ \le\ 4\sqrt 2\,  \sigma_{max}L\snorm{x-y}\ \le\ 8 \sigma_{max}L\sqrt{ \snorm{x}^2 + \snorm{y}^2}  .
 \end{equation}
The asymptotic strict drift condition in \eqref{eq:FL_coupling}  allows us to split the proof of contractivity into two parts: (i) $\anorm{x - y}\ge R $ where any coupling is contracting in $\rho$ due to \eqref{eq:FL_coupling}, and (ii) $\anorm{x - y}< R $, where  $\rho$ is contracting due to the specially designed two-scale coupling.

\begin{theorem} \label{thm:CONTR}
 Suppose that Assumption  \ref{B123} holds. Let $T>0$ satisfy
\begin{equation}
\label{A0A}
\dfrac{\sigma_{max}}{\sigma_{min}} \L T^2\ \le\ \min \left( \dfrac{1}{48} \dfrac{K}{\L}, \dfrac{1}{256 \L R^2} \dfrac{\sigma_{min}}{\sigma_{max}} \right)
\end{equation}
Let $\alpha$, $\gamma$, $a$, and $\epsilon$ be given by \begin{eqnarray}
\label{Calpha}\alpha &:=& 4 \sigma_{max} \L \;,\\
\label{Cgamma}\gamma &:=& \min \left( T^{-1},R^{-1}/4\right) \;,\\
\label{Ca}a &:=&  T^{-1} \;, \\
\label{Ce} \epsilon &:= &  (1/160) (A+\tr(\tilde{\mathcal{C}} \mathcal{C}^{-s}))^{-1} e^{-  R / T } \;.
\end{eqnarray} 
Then for any $x, y \in \H^s$, we have
\begin{eqnarray}\label{contrmain}
	E[\rho(X'(x,y),Y'(x,y))] &\leq& e^{-c} \rho(x,y),\qquad\mbox{where}
\end{eqnarray}
\begin{equation}\label{crate}
c\ =\  \min\left(\frac{1}{16} K T^2,\,  \frac{1}{128} T\max (R,T)\,  e^{-\max (R,T)/T} \right) \;.
\end{equation}
\end{theorem}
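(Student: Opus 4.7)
Write $\rho(x,y)^2 = f(\anorm{x-y})\, g(x,y)$ where $g(x,y) := 1+\epsilon \snorm{x}^2+\epsilon\snorm{y}^2$, and apply Cauchy--Schwarz to split the expectation:
\begin{equation*}
E[\rho(X'(x,y),Y'(x,y))]\ \le\ \sqrt{E\!\left[f(\anorm{X'-Y'})\right]}\cdot\sqrt{E\!\left[g(X',Y')\right]}.
\end{equation*}
The plan is to show $E[\rho(X',Y')]^2\le e^{-2c}\rho(x,y)^2$ by separately controlling the two factors, treating the regions $\anorm{x-y}\ge R$ and $\anorm{x-y}<R$ differently. The parameter $R$ and the Lyapunov weight $\epsilon$ are tuned so that the drift-driven contraction in the far field and the coupling-driven contraction in the near field both produce the same rate $c$ in \eqref{crate}.

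\textbf{Case (i): $\anorm{x-y}\ge R$.} Here $f$ saturates: $f(\anorm{X'-Y'})\le f(R)=f(\anorm{x-y})$, so it suffices to show $E[g(X',Y')]\le e^{-2c}g(x,y)$. From the coupling Foster--Lyapunov bound \eqref{eq:FL_coupling} (itself a consequence of Lemma \ref{lem:LYAP}, once Assumption (B1) and the choice $L T^2\le K/(48L)$ in \eqref{A0A} are verified), one obtains
\begin{equation*}
E[g(X',Y')]\ \le\ g(x,y)\ -\ \tfrac{K T^2}{4}\,\epsilon(\snorm{x}^2+\snorm{y}^2)\ +\ 10\epsilon(A+\tr(\tilde{\mathcal{C}}\mathcal{C}^{-s}))T^2.
\end{equation*}
The key step is to absorb the additive constant into the Lyapunov drift: by \eqref{compnormB} and $\anorm{x-y}\ge R$, we have $\snorm{x}^2+\snorm{y}^2\ge R^2/(128\sigma_{max}^2 L^2)$, and the definition \eqref{ieq:R} of $R$ is precisely what makes the drift term dominate the additive constant by a fixed multiplicative margin. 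This yields $E[g(X',Y')]\le (1-\tfrac{1}{8}KT^2)g(x,y)$, and then $E[\rho(X',Y')]^2\le e^{-2c}\rho(x,y)^2$ for $c=\tfrac{1}{16}KT^2$.

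\textbf{Case (ii): $\anorm{x-y}<R$.} This is the heart of the proof and relies on the two-scale coupling of Section \ref{sec:main_results:couplings}. The key input, to be established in Lemma \ref{lem:contr}, is a one-step contraction of the form
\begin{equation*}
E[\anorm{X'(x,y)-Y'(x,y)}]\ \le\ \bigl(1-\tfrac{1}{2}\kappa\bigr)\anorm{x-y}\qquad\text{where}\qquad \kappa=\tfrac{1}{8}T\max(R,T),
\end{equation*}
obtained by: (a) for the low modes, exploiting that when the shift coupling succeeds (which it does with maximal probability $1-d_{\TV}(\mathcal{N}(0,\tilde{\mathcal{C}}),\mathcal{N}(\gamma z^\ell,\tilde{\mathcal{C}}))$), the initial velocity difference is $-\gamma z^\ell$, so by $\gamma\le T^{-1}$ and the ODE \eqref{eq:dynamicsgeneral} the low-mode position difference at time $T$ is essentially $z^\ell(1-\gamma T)$ plus a drift perturbation controlled via (B1); the residual reflection case contributes at worst a Lipschitz bound that is absorbed by the smallness condition on $T$ in \eqref{A0A}; (b) for the high modes, synchronous coupling of velocities combined with (B2) (which identifies $b^h$ with $-x^h$ up to a $\tfrac{1}{3}$-Lipschitz perturbation) yields approximate preservation through the oscillatory linear part, and the weighting by $\alpha=4\sigma_{max}L$ in $\anorm{\cdot}$ ensures the high-mode contribution to $E[\anorm{X'-Y'}]$ is dominated by the low-mode contraction. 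Given Lemma \ref{lem:contr}, concavity of $f$ gives
\begin{equation*}
E[f(\anorm{X'-Y'})]\ \le\ f(\anorm{x-y})\ -\ \tfrac{1}{2}\kappa\,\anorm{x-y}\,f'(\anorm{x-y}),
\end{equation*}
and since $\anorm{x-y}\le R$, the ratio $r f'(r)/f(r)\ge e^{-aR}=e^{-R/T}$ converts this into a multiplicative contraction of $f$ by a factor $(1-\tfrac{1}{2}\kappa e^{-R/T})$. Meanwhile Lemma \ref{lem:LYAP} gives only a weak bound $E[g(X',Y')]\le g(x,y)+10\epsilon(A+\tr(\tilde{\mathcal{C}}\mathcal{C}^{-s}))T^2$, and the choice \eqref{Ce} of $\epsilon$ is calibrated so that this additive error is bounded by $\tfrac{1}{16}T^2 e^{-R/T}g(x,y)$, i.e., strictly smaller than the $f$-contraction. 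Cauchy--Schwarz then yields $E[\rho(X',Y')]\le e^{-c}\rho(x,y)$ with $c$ as in \eqref{crate}.

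\textbf{Main obstacle.} The technical heart is Lemma \ref{lem:contr}: showing that $E[\anorm{X'-Y'}]$ contracts requires simultaneously handling four coupled phenomena --- the shift coupling of low-mode velocities, the reflection fallback, the oscillatory high-mode dynamics under (B2), and the nonlinear cross-coupling between high and low modes through $b$. The delicate point is that the low-mode contraction from the shift coupling produces correction terms involving $\snorm{z^h}$ via the Lipschitz constant $L$, so the weight $\alpha=4\sigma_{max}L$ must be large enough to absorb these cross-terms without swamping the high-mode contribution; this dictates the specific form of the condition \eqref{A0A} on $T$.
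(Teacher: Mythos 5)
Your global architecture (Cauchy--Schwarz factorization of $\rho$, the split into $\anorm{x-y}\ge R$ versus $\anorm{x-y}<R$, and the far-field argument via the coupled Foster--Lyapunov bound \eqref{eq:FL_coupling} and \eqref{ieq:R}) matches the paper. The gap is in the near-field case, where you reduce everything to the claim $E[\anorm{X'-Y'}]\le(1-\kappa/2)\anorm{x-y}$ and only afterwards push this through $f$ by concavity. That intermediate claim is false for this coupling, and the concave $f$ cannot be treated as post-processing: it must enter \emph{before} the expectation, to cap the contribution of the reflection branch. Concretely, set $h=\gamma\norm{\tilde{\mathcal{C}}^{-1/2}z^{\ell}}$ and $N=\langle e^{\ell},\tilde{\mathcal{C}}^{-1/2}\xi^{\ell}\rangle\sim\mathcal N(0,1)$. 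On the shift event the low-mode distance decreases by roughly $\gamma T\norm{\tilde{\mathcal{C}}^{-1/2}z^{\ell}}=Th$ (Lemma \ref{lem:contr}(i)), with probability close to $1$. On the reflection event the low-mode velocity difference is $\xi^{\ell}-\mathcal R\xi^{\ell}=2\tilde{\mathcal{C}}^{1/2}e^{\ell}N$, so the low-mode distance changes by about $+2TN$, and the expected increase over that event is $E[2TN\,(1-\rho_{-\gamma z^{\ell}}(\xi^{\ell}))^{+}]=2Th\,E[N^{2}1_{N>0}]+O(Th^2)=Th+O(Th^{2})$. The gain and the loss cancel at leading order (with the residual of the wrong sign), so the raw expected distance does not contract at all when $z^{h}=0$ --- let alone by your $\kappa/2=\tfrac{1}{16}T\max(R,T)$, which moreover exceeds the best-possible low-mode contraction $\gamma T=\min(1,T/(4R))$ once $R$ is moderately large. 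The paper's proof never estimates $\anorm{X'-Y'}$ on the reflection event: Lemma \ref{lem:f}(ii) bounds the increase of $f$ there by $a^{-1}f'_{-}(r)=Tf'_{-}(r)$ \emph{uniformly in the realized velocity}, so the reflection branch contributes at most $Tf'_{-}(r)\,P[W^{\ell}\ne-\gamma z^{\ell}]\le\tfrac{2}{5}\gamma Tf'_{-}(r)\norm{\tilde{\mathcal{C}}^{-1/2}z^{\ell}}$ by Lemma \ref{lem:X}; the constant $1/\sqrt{2\pi}\approx 0.4$ then beats the coefficient $19/32$ of the gain from the shift branch. Your route replaces $1/\sqrt{2\pi}$ by $1$, and the inequality goes the wrong way.

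Two smaller points confirm the claim cannot be patched as stated. First, Lemma \ref{lem:contr} in the paper is a deterministic flow estimate valid only on the event that the shift coupling succeeds (initial velocity difference exactly $-\gamma z^{\ell}$ on the low modes, $0$ on the high modes); the probabilistic content you attribute to it actually resides in Lemma \ref{lem:X} together with Lemma \ref{lem:f}(ii), and these must be combined inside $E[f(\anorm{X'-Y'})]$, not applied to $f(E[\anorm{X'-Y'}])$. Second, your bound $rf'_{-}(r)/f(r)\ge e^{-aR}$ fails when $aR<1$: Lemma \ref{lem:f}(iii) gives the lower bound $\max(1,aR)e^{-\max(1,aR)}$, which equals $e^{-1}<e^{-aR}$ in that regime; this is why the rate \eqref{crate} carries $\max(R,T)e^{-\max(R,T)/T}$ rather than $Re^{-R/T}$.
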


\medskip

The proof of this theorem is given in \S\ref{sec:CONTRproof}.

\begin{remark}
The rate in \eqref{crate} is similar to the rate in the finite-dimensional case found in Theorem 2.3 of Ref.~\cite{BoEbZi2018}. The main difference is that the condition on $\L T^2$ in \eqref{A0A} now reflects the effect of preconditioning.  
\end{remark}

\subsection{Quantitative bounds for distance to the invariant measure}
\label{sec:QB}

Theorem \ref{thm:CONTR} establishes global contractivity of the transition kernel $\pi (x,dy)$ w.r.t. the Kantorovich distance based on the underlying semimetric $\rho$, which  for probability measures $\nu ,\eta$ on $\H^s$ is defined as
$$\mathcal W_\rho (\nu ,\eta )\ =\  \inf_{\gamma\in C(\nu ,\eta )}\int\rho (x,y)\,\gamma (dx\, dy) = 
\inf_{\substack{X' \sim \nu,  Y' \sim \eta}}E \left[ \rho(X',Y') \right]
$$
where the infimum is over all couplings $\gamma$ of $\nu$ and $\eta$. 
Moreover, it implies quantitative bounds for the standard $L^1$
Wasserstein distance 
$$\mathcal W^{s,1} (\nu ,\mu )\ =\  \inf_{\gamma\in C(\nu ,\mu )}\int \snorm{x-y}\,\gamma (dx\, dy) = 
\inf_{\substack{X' \sim \nu,  Y' \sim \mu}}E \left[ \snorm{X'-Y'} \right]
$$
with respect to the invariant measure $\mu$ on $\H^s$. Let $M_1(\nu ):=
\int \snorm{x}\, \nu (dx)$.

\begin{corollary}\label{cor:QBHMC}
Suppose that Assumption  \ref{B123} holds.  Let $T\in (0,R)$ satisfy \eqref{A0A}.  Then for any $k\in\mathbb N$ and for any probability measures $\nu ,\eta$ on $\H^s$,
\begin{eqnarray}
\label{QBpHMC1}
\mathcal W_\rho (\nu\pi^k,\eta\pi^k)& \le & e^{-c k}\, \mathcal W_\rho (\nu ,\eta ),\qquad\qquad \text{and}\\
\mathcal W^{s,1} (\nu\pi^k ,\mu )& \le & C\,(1+\sqrt\epsilon M_1(\nu )+ (1/4) K^{-1/2} e^{-R/(2 T)})\, e^{-c k} \label{QBpHMC3}
\end{eqnarray}
where the rate $c$ and the constant $\epsilon$ are given explicitly by \eqref{crate} and \eqref{Ce}, 
and
\begin{equation}\label{Cdefi}
  C\ =\ \max \left( 2 {T}\sigma_{min}^{-1},\,23\, ({A+\tr(\tilde{\mathcal{C}} \mathcal{C}^{-s})})^{1/2} e^{  R /(2 T) }\right).   
\end{equation}
In particular, for a given 
constant $\delta\in (0,\infty )$, the $L^1$ Wasserstein distance $\Delta (k)=\mathcal W^{s,1}(\nu\pi^k ,\mu )$ w.r.t.\ $\mu$ after $k$ steps of the chain with initial distribution $\nu$ satisfies $\Delta (k)\le\delta$
provided
\begin{equation}
\label{QBpHMC2}
k\ \ge\ \frac 1c\, \log\frac{C \,(1+ \sqrt{\epsilon} M_1(\nu )+(1/4) K^{-1/2} e^{-R/(2 T)})}{\delta} . 
\end{equation}
\end{corollary}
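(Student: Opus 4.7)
The bound \eqref{QBpHMC1} follows directly from Theorem \ref{thm:CONTR} by an iteration argument. Given $\epsilon' > 0$, I pick a coupling $(X_0, Y_0)$ of $\nu$ and $\eta$ with $E[\rho(X_0, Y_0)] \le \mathcal{W}_\rho(\nu, \eta) + \epsilon'$, and iterate the two-scale coupling transition \eqref{**} conditionally on $(X_m, Y_m)$ to build a coupled chain whose marginals at time $k$ are $\nu\pi^k$ and $\eta\pi^k$. Applying \eqref{contrmain} step-by-step together with the tower property yields $E[\rho(X_k, Y_k)] \le e^{-ck}\, E[\rho(X_0, Y_0)] \le e^{-ck}(\mathcal{W}_\rho(\nu, \eta) + \epsilon')$; letting $\epsilon' \to 0$ gives \eqref{QBpHMC1}.

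For \eqref{QBpHMC3}, I first specialize \eqref{QBpHMC1} to $\eta = \mu$ (using $\mu\pi^k = \mu$) to obtain $\mathcal{W}_\rho(\nu\pi^k, \mu) \le e^{-ck}\, \mathcal{W}_\rho(\nu, \mu)$. To bound the initial distance $\mathcal{W}_\rho(\nu, \mu)$, I use the pointwise estimate $\rho(x,y) \le \sqrt{f(R)}\,(1 + \sqrt{\epsilon}\,\snorm{x} + \sqrt{\epsilon}\,\snorm{y})$, which follows from $f \le f(R)$ and $\sqrt{1+u+v} \le 1 + \sqrt{u} + \sqrt{v}$, and apply a product coupling to get $\mathcal{W}_\rho(\nu, \mu) \le \sqrt{f(R)}\,(1 + \sqrt{\epsilon}\, M_1(\nu) + \sqrt{\epsilon}\, M_1(\mu))$. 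The stationary moment $M_1(\mu)$ is controlled via invariance of $\mu$ and Lemma \ref{lem:LYAP}: solving the recursive bound under stationarity gives $E_\mu[\snorm{x}^2] \le 10(A + \tr(\tilde{\mathcal{C}}\mathcal{C}^{-s}))/K$, and Jensen's inequality combined with the explicit form of $\epsilon$ in \eqref{Ce} then yields $\sqrt{\epsilon}\,M_1(\mu) \le (1/4)\, K^{-1/2}\, e^{-R/(2T)}$, matching the third summand in \eqref{QBpHMC3}.

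To convert $\mathcal{W}_\rho$ to $\mathcal{W}^{s,1}$, I establish a pointwise bound $\snorm{x-y} \le C_1 \, \rho(x,y)$ by splitting on the set $\{\anorm{x-y} \le R\}$. In the close regime, concavity of $f$ with $f(0)=0$ combined with the check $2 f(r) \ge r\, f'(r)$ (which follows directly from $f(r) = (1-e^{-ar})/a$) implies that $r \mapsto r/\sqrt{f(r)}$ is non-decreasing on $[0, R]$, so $\anorm{x-y} \le (R/\sqrt{f(R)})\, \sqrt{f(\anorm{x-y})} \le (R/\sqrt{f(R)})\, \rho(x,y)$; combined with $\snorm{\cdot} \le \sigma_{min}^{-1}\, \anorm{\cdot}$ from \eqref{compnormA} (using $\alpha \ge \sigma_{min}$), this yields $\snorm{x-y} \le (\sigma_{min}^{-1}\, R/ \sqrt{f(R)})\, \rho(x,y)$. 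In the far regime, $\rho(x,y)^2 \ge f(R)\,\epsilon\,(\snorm{x}^2 + \snorm{y}^2)$, hence $\snorm{x-y} \le \snorm{x} + \snorm{y} \le \sqrt{2/(\epsilon\, f(R))}\, \rho(x,y)$. Multiplying the maximum of the two resulting constants by $\sqrt{f(R)}$ (the prefactor from the bound on $\mathcal{W}_\rho(\nu, \mu)$), using $f(R) \le T$ together with the explicit $\epsilon$ from \eqref{Ce}, recovers the constant $C$ of \eqref{Cdefi}; combining gives \eqref{QBpHMC3}, and \eqref{QBpHMC2} follows by solving $C\,(1 + \cdots)\, e^{-ck} \le \delta$ for $k$. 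The main technical obstacle is the pointwise conversion step, since $\rho$ only behaves like $\sqrt{\anorm{\cdot}}$ near the diagonal; the two-regime split, combined with the Lyapunov weight $\epsilon$ in the far regime, is what makes a linear pointwise comparison possible.
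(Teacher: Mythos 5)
Your overall strategy matches the paper's: specialize \eqref{QBpHMC1} to $\eta=\mu$, bound $\mathcal W_\rho(\nu,\mu)$ from above by $\sqrt{f(R)}\,(1+\sqrt\epsilon M_1(\nu)+\sqrt\epsilon M_1(\mu))$ (the paper uses $f\le T$, which is the same since $f(R)\le T$), control $M_1(\mu)$ via stationarity and Lemma~\ref{lem:LYAP} (your computation yields exactly the paper's $(1/4)K^{-1/2}e^{-R/(2T)}$), and finally establish a pointwise comparison $\snorm{x-y}\le C_1\rho(x,y)$ via a near/far split. The one genuine divergence is the near/far threshold: you split on $\anorm{x-y}\le R$ (using the monotonicity of $r\mapsto r/\sqrt{f(r)}$, which is a correct consequence of $2f\ge rf'$), whereas the paper splits on $\anorm{x-y}\le T$ (using the elementary lower bound $f(r)\ge r/e$ valid for $r\le T$).

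The problem is that this changes the near-regime constant from the paper's $\sqrt e\,T\sigma_{min}^{-1}$ (which is visibly $\le 2T\sigma_{min}^{-1}$, the first branch of $C$) to your $R\,\sigma_{min}^{-1}$. Since $R>T$, this is strictly worse than the paper's, and it does \emph{not} reduce to $2T\sigma_{min}^{-1}$; you would instead have to show that $R\,\sigma_{min}^{-1}$ is dominated by the \emph{second} branch $23\,(A+\tr(\tilde{\mathcal C}\mathcal C^{-s}))^{1/2}e^{R/(2T)}$ of \eqref{Cdefi}. You assert the resulting max ``recovers the constant $C$'' but give no argument. The assertion is in fact true — one can observe that $\tr(\tilde{\mathcal C}\mathcal C^{-s})\ge\sigma_{min}^{-2}$ (since $\sigma_{min}^{-2}=\tilde\lambda_{j^*}\lambda_{j^*}^{-s}$ appears as a summand of the trace for some $j^*\le n$), hence $(A+\tr)^{1/2}\sigma_{min}\ge 1$, while \eqref{A0A} forces $T\le\sigma_{min}/(16L\sigma_{max}R)\le 1/(16R)$, hence $R/(2T)\ge 8R^2$ and $23(A+\tr)^{1/2}e^{R/(2T)}\ge 23\sigma_{min}^{-1}e^{8R^2}\ge R\sigma_{min}^{-1}$ — but none of this is in your proposal, and it is not a trivial check. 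As written there is a gap in the verification of the constant. The paper's choice of threshold $T$ avoids this entirely, which is why it produces the clean first branch $2T\sigma_{min}^{-1}$ of $C$; you should either adopt the paper's threshold or supply the missing chain of inequalities above.
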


The corollary is a rather direct consequence of Theorem \ref{thm:CONTR}.
A short proof is included in \S\ref{sec:CONTRproof}.

\begin{remark}[Quantitative bounds for ergodic averages] 
MCMC methods are often applied to approximate expectation values w.r.t.\ the target distribution by ergodic averages of the Markov chain. Our results (e.g.\ \eqref{QBpHMC1}) directly imply completely explicit bounds for
bias and variances, as well as explicit concentration inequalities for these ergodic averages in the case of pHMC. Indeed, the general results by Joulin and Ollivier \cite{JoulinOllivier} show that such bounds follow directly from an $L^1$ Wasserstein contraction w.r.t.\ an arbitrary metric $\rho$, which is precisely the statement shown above.
 \end{remark}

\section{Applications}
\label{sec:applications}

\subsection{Transition Path Sampling} \label{sec:TPS} 
Here we discuss the use of pHMC 
in transition path sampling (TPS). As an application of Theorem~\ref{thm:CONTR}, we obtain dimension-free contraction rates for exact preconditioned HMC in this context.  Fix a time horizon $\tau>0$ (not to be confused with the duration parameter in preconditioned HMC which we denote by $T$).   The aim of TPS \cite{ReVa2005, hairer2007analysis, BeRoStVo2008, HaStVo2009} is to sample from a \textit{diffusion bridge} or \textit{conditioned diffusion}, i.e., from the conditional law $\nu_{a,b}$ of the solution $\mathsf{X}: [0,\tau] \to \mathbb{R}^d$ to a $d$-dimensional stochastic differential equation of the form \begin{equation} \label{sde}
\mathsf{d} \mathsf{X}( \mathsf{t} ) = - \nabla \Psi( \mathsf{X}( \mathsf{t} ) ) \, \mathsf{ds} + \mathsf{d} \mathsf{W}( \mathsf{t} )
\end{equation} given both initial and final conditions \[
\mathsf{X}(0) = a \quad \text{and} \quad \mathsf{X}(\tau) = b .
\] 
Here $\Psi: \mathbb{R}^d \to \mathbb{R}$ is a given potential energy function and $ \mathsf{W}$ is a $d$-dimensional 
standard Brownian motion. 
TPS is particularly relevant to molecular dynamics where the states $a$ and $b$ represent different configurations of a molecular system \cite{Bolhuis2002,Miller2007,pinski2010transition}.\smallskip

We first recenter: Let $\mu=\nu\circ \theta_M^{-1 }$ denote the law 
of the recentered bridge where $\theta_M(x)=x-M$ is the translation 
on path space by the mean 
$\mathsf{M}(\mathsf{t}) =  a + (\mathsf{t}/\tau) (b-a)$ of the 
Brownian bridge from $a$ to $b$.
Then by Girsanov's theorem, the measure $\mu$ is absolutely continuous 
with respect to the law $\mu_0$ of the Brownian bridge from $0$ to $0$ \cite{hairer2005analysis,BeRoStVo2008}.
Moreover, the measure $\mu_0$ is
the centered Gaussian measure on the Hilbert space $\H = L^2([0,\tau], \mathbb{R}^d)$ with covariance operator $\mathcal{C}=-\Delta_D^{-1}$ where $\Delta_D$ is the Dirichlet Laplacian, and the relative density of $\mu$
with respect to $\mu_0$ is proportional to $\exp(-U(x))$ where the function $U(x)$ is defined in terms of the so-called path potential energy function $G : \mathbb{R}^d \to \mathbb{R}$ as follows \begin{equation} \label{eq:U_TPS}
U(x) = \int_0^{\tau} G ( x(\mathsf{t})+\mathsf{M}(\mathsf{t}) ) \mathsf{dt}  \quad \text{where} \quad  G(\cdot) = \frac{1}{2} | \nabla \Psi(\cdot)|^2 -   \frac{1}{2} \Delta \Psi(\cdot) \;.
\end{equation} 
In the main convergence result given below, we make the following regularity assumption on $G$.

\begin{assumption} \label{G123} The function $G: \mathbb{R}^d \to \mathbb{R}$ is continuously differentiable. Moreover, $\nabla G(0)=0$, and $\nabla G$ is uniformly bounded and globally
Lipschitz continuous, i.e., there exist finite constants $M_G, L_G$ such that for all $x,y\in\mathbb R^d$,
$$  | \nabla G(x)| \le M_G, \quad\text{and}\quad |\nabla G(x)-\nabla G(y)|\le L_G|x-y|. $$
\end{assumption}
This regularity assumption frequently holds in molecular dynamics applications, since the configuration space of molecular systems is usually taken to be a fixed cubic box with periodic boundary conditions \cite{AlTi1987,FrSm2002,St2007,CaLeSt2007, ELi2008,Bo2014,Leimkuhler2015}. 
In this case, we can lift the TPS problem to the covering space $\R^d$
by extending the path potential to a periodic function on this space.
Thus after recentering the coordinate system, Assumption \ref{G123} is satisfied whenever $G$ is $C^2$.\smallskip

To implement TPS on a computer, we use the finite difference method to approximate the infinite-dimensional distribution $\mu(dx) \propto \exp(- U(x) ) \mathcal{N}(0, \mathcal{C})(dx)$ by a finite-dimensional probability measure $\mu_{m}$. Other approximations, e.g., Galerkin or finite-element, are also possible and should yield similar results.  We focus on the finite difference method because it is widely used in practice.  Discretize the interval $[0,\tau]$ into $m +2$ evenly spaced grid points \begin{equation} \label{grid_TPS}
 \mathsf{t}_j = {\tau}j/{(m+1)} ,  \quad j=0, \dots, m+1  \;.
\end{equation}   The space of paths on $\mathbb{R}^d$ is then approximated by the finite-dimensional space $\mathbb{R}^{m d}$.  Specifically, we write $\bph{x} \in \mathbb{R}^{m d}$ as \[
\bph{x} = (\bph{x}_{1:d}, \dots, \bph{x}_{m+1:m+d})
\] where the $j$-th component $\bph{x}_{j+1:j+d}:=(\bph{x}_{j+1}, \dots, \bph{x}_{j+d})$ is a $d$-dimensional  vector that can be viewed as an approximation of $x(\mathsf{t}_j)$ for $j=1,\dots,m$.  
The Dirichlet Laplacian $\Delta_D$ is approximated by the $m d \times m d$ Dirichlet Laplacian matrix $\bph{\Delta}_{D,m}$ with $(i,j)$-th entry \[
(\bph{\Delta}_{D,m})_{i,j} = \begin{dcases} - 2 \left( { \tau }/{(m+1)} \right)^{-2} & \text{if $|i-j|=0$,} \\
 \left( { \tau }/{(m+1)} \right)^{-2} & \text{if $|i-j|=d$,} \\
 0 & \text{otherwise.} 
\end{dcases} \] The covariance operator $\mathcal{C}$ is approximated by the $m d \times m d$ matrix $\bph{\mathcal{C}} = - \bph{\Delta}_{D,m}^{-1}$, and the Hilbert space $\mathcal{H}$ is represented by $\mathbb{R}^{m d}$ with inner product given by the weighted dot product $\langle \bph{x}, \bph{y} \rangle =  \frac{\tau}{m+1} \bph{x} \bullet \bph{y}$.  
The functional \eqref{eq:U_TPS} is discretized as \[
U_{m}( \bph{x}) = \frac{\tau}{m+1} G_{m}(\bph{x}) \;, \quad \text{where} \quad G_{m}(\bph{x}) = \sum_{j=1}^{m} G(\bph{x}_{j+1:j+d} + \mathsf{M}(\mathsf{t}_j) ) \;.
\] Note that if the vector $\bph{x}$ contains the grid values of a smooth function $x$, then $U_{m}( \bph{x}) \to U(x)$ as $m \to \infty$.  
In summary, the infinite-dimensional path distribution $\mu(dx)$ is approximated by the finite-dimensional probability measure $\mu_m(d \bph{x})$ with non-normalized density $\exp\left(-U_{m}( \bph{x})- \frac{1}{2} \langle \bph{x}, \bph{\mathcal{C}}^{-1} \bph{x} \rangle \right)$.

To approximately sample from $\mu_m$, we use pHMC with transition step in \eqref{eq:transitionstep}. This corresponds to a Markov chain on $\mathbb{R}^{m d}$ with transition step \begin{equation} \label{eq:transitionstep_TPS}
\bph{x} \mapsto \bph{X}'(\bph{x}) ~:=~ \bph{q}_T(\bph{x}, \bph{\xi}) \;, \qquad \bph{\xi} \sim \mathcal{N}(0, \frac{m+1}{\tau} \bph{\mathcal{C}})
\end{equation} where $\bph{q}_t$ solves  \begin{equation} \label{eq:dynamics_TPS}
\frac{d}{dt} \bph{q}_t = \bph{v}_t \;, \quad \frac{d}{dt} \bph{v}_t = \bph{b}(\bph{q}_t)  \;, \quad (\bph{q}_0( \bph{x},\bph{v} ), \bph{v}_0( \bph{x},\bph{v} )) = ( \bph{x},\bph{v} ) \in \mathbb{R}^{2 m d} \;,
\end{equation}
with $\bph{b}(\bph{x}) = -  \bph{x} - \bph{\mathcal{C}} \nabla  G_{m} (\bph{x})$.  Let $\pi_m$ denote the transition kernal of \eqref{eq:transitionstep_TPS}.

\begin{theorem}[Transition Path Sampling]
\label{thm:CONTR_TPS}
Suppose that Assumption~\ref{G123} holds.
 Let $\kappa :=2 (\tau^2 / \pi^2) L_G$, 
 $m_{\ell}=\lfloor \sqrt{3 \kappa} \rfloor$,
 $n=m_{\ell} d$, and 
$m^{\star}= \lceil  (m_{\ell}+1) \pi/2 \rceil$. Let $R$, $c$, $C$ and $\epsilon$ be defined as
\begin{align}
R \ &:= \ 16 \sqrt{20} \pi \kappa^{1/2} (1+\kappa) ( (\tau / \pi)^3 M_G^2 +  d)^{1/2} \;, \label{R_TPS} \\
 \quad c \ &:= \  \min( (1/32) T^2, (1/128) T^2 \max(R,T) e^{-\max(R,T)/T} ) \;, \label{crate_TPS} \\
C \ &:= \ \max(T \tau, 23 ( (\tau^5 / \pi^4) M_G^2+ d \tau^2 / 3  )^{1/2} e^{R/(2 T)})  \;,  \label{C_TPS} \\
\epsilon \ &:= \  (  \tau^5 M_G^2)^{-1} e^{-R/T} \;. \label{eps_TPS}  
\end{align}
Suppose that the duration parameter $T \in (0, R)$ satisfies
\begin{align}
 2 \sqrt{3 \kappa} (1+\kappa) T^2 \ &\le \  \min\left( \frac{1}{96  (1+\kappa)}, \frac{1}{512 \sqrt{3 \kappa} (1+\kappa) R^2} \right) \;.  \label{A0A_TPS} 
\end{align}
Then for any $m > m^{\star}$, $k\in \mathbb{N}$, and probability measure $\nu_m$ on  $\mathbb{R}^{m d}$,
\begin{align}
\mathcal W^{0,1}  (\nu_m \pi_m^k, &\mu_m) \le  C e^{-c k} \left( 1 + \sqrt{\epsilon} M_1(\nu_m) + (1/8) e^{-R/(2 T)} \right) \;. \label{QBpHMC_TPS}
\end{align}
\end{theorem}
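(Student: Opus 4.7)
The plan is to recognize the TPS chain \eqref{eq:transitionstep_TPS} as an instance of the general chain \eqref{eq:transitionstep} on the finite-dimensional Hilbert space $\H = \mathbb{R}^{md}$ equipped with the weighted inner product $\langle\bph{x},\bph{y}\rangle = (\tau/(m+1))\bph{x}\bullet\bph{y}$, with regularity index $s=0$ and $\tilde{\mathcal{C}}=\mathcal{C}=\bph{\mathcal{C}}$, and then apply Corollary~\ref{cor:QBHMC}. Assumption~\ref{C123} is automatic in finite dimensions, so all work goes into verifying Assumption~\ref{B123} with constants that are independent of $m$ once $m>m^{\star}$, and tracking these constants through the explicit formulas \eqref{ieq:R}, \eqref{Ce}, \eqref{crate}, \eqref{Cdefi}.

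The first step is to control the spectrum of $\bph{\mathcal{C}}$. Its Hilbert-space eigenvalues are $\lambda_k=\tau^2/(4(m+1)^2\sin^2(k\pi/(2(m+1))))$, each with multiplicity $d$, so the $(m_\ell d+1)$st eigenvalue in decreasing order is the scalar eigenvalue $\lambda_{m_\ell+1}$. Combining $\sin(x)\le x$ with the lower bound $\sin(x)\ge(2/\pi)x$ on $[0,\pi/2]$, and the refinement $\sin(x)\ge x(1-x^2/6)$ valid on $[0,1]$, the hypothesis $m>m^{\star}=\lceil(m_\ell+1)\pi/2\rceil$ enforces $k\pi/(2(m+1))\le 1$ for all $k\le m_\ell+1$, yielding the $m$-independent estimates $\lambda_1\le 2\tau^2/\pi^2$, $\lambda_{m_\ell+1}\le 1/(3L_G)$, $\sigma_{\max}/\sigma_{\min}\le m_\ell\pi/2\le 2\sqrt{3\kappa}$, $\sigma_{\max}\le \pi\sqrt{3\kappa}/\tau$, as well as the crude bound $\tr(\bph{\mathcal{C}})\le d\tau^2/3$ obtained from $\sum k^{-2}=\pi^2/6$. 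Because the weighting $\tau/(m+1)$ in the inner product makes the Hilbert differential of $U_m$ coincide with the Euclidean gradient $\nabla G_m$, Assumption~\ref{G123} translates directly into $L_g=L_G$ in the sense of \eqref{eq:gradlip}. Example~\ref{ex:U_Lg} now delivers (B1) with $\L\le 1+\lambda_1 L_G\le 1+\kappa$ and (B2) with the split $n=m_\ell d$, while (B3) is obtained from $|\nabla G|\le M_G$ via $|\nabla G_m|\le\sqrt{\tau}M_G$, Cauchy--Schwarz, and Young's inequality: $\langle\bph{x},\bph{b}(\bph{x})\rangle\le -|\bph{x}|^2+\lambda_1\sqrt{\tau}M_G|\bph{x}|\le -(1/2)|\bph{x}|^2+\tau^5 M_G^2/(2\pi^4)$, giving $K=1/2$ and $A=\tau^5 M_G^2/(2\pi^4)$.

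What remains is bookkeeping. Substituting the values of $K$, $A$, $\tr(\bph{\mathcal{C}})$, $\sigma_{\max}$, $\sigma_{\max}/\sigma_{\min}$, and $\L$ into \eqref{ieq:R}, \eqref{Ce}, \eqref{crate}, and \eqref{Cdefi} reproduces \eqref{R_TPS}--\eqref{eps_TPS}, and the condition \eqref{A0A_TPS} is just \eqref{A0A} after using $(\sigma_{\max}/\sigma_{\min})\L\le 2\sqrt{3\kappa}(1+\kappa)$ on the left-hand side. The Wasserstein estimate \eqref{QBpHMC_TPS} then follows from \eqref{QBpHMC3} with $s=0$ (so $\mathcal{W}^{s,1}=\mathcal{W}^{0,1}$), the factor $(1/4)K^{-1/2}=\sqrt{2}/4$ being absorbed into the displayed $1/8$ after enlarging $C$ by the universal constant $\sqrt{2}$. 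The main obstacle is precisely the dimension-free bookkeeping: without the condition $m>m^{\star}$, both the Lipschitz constant $\L$ (through $\lambda_1$) and the number of low modes required for (B2) would drift with $m$, destroying the uniformity of $c$, $C$, $\epsilon$, and $R$ in $m$ and, with it, the possibility of passing to a continuum limit for the infinite-dimensional bridge.
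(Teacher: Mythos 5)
Your proposal takes essentially the same route as the paper: verify Assumption~\ref{B123} with $m$-independent constants by comparing the spectrum of $\bph{\mathcal{C}}$ with that of $\mathcal{C}$ (the paper packages your $\sin$-estimates as Lemma~\ref{eigs_TPS}), then invoke Corollary~\ref{cor:QBHMC} with $s=0$ and $\tilde{\mathcal C}=\bph{\mathcal C}$. One arithmetic slip: in (B3) the relevant eigenvalue is the \emph{discrete} leading eigenvalue $\bph{\Lambda}_1$, which satisfies $\lambda_1\le\bph{\Lambda}_1\le \sqrt 2\,\lambda_1$ but not $\bph{\Lambda}_1\le\lambda_1$, so Young's inequality gives $A=(1/2)\bph{\Lambda}_1^2\tau M_G^2\le\lambda_1^2\tau M_G^2=(\tau^5/\pi^4)M_G^2$ (the paper's value) rather than your $(\tau^5/(2\pi^4))M_G^2$; with the corrected $A$ the constants \eqref{R_TPS}--\eqref{eps_TPS} still dominate those produced by the corollary, so nothing downstream breaks. (The discrepancy you flag between $(1/4)K^{-1/2}=\sqrt 2/4$ and the displayed $1/8$ is present in the paper's own proof as well, which silently writes $1/8$.)
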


\begin{remark}
 Note that the upper bound in \eqref{QBpHMC_TPS} depends on dimension only through the initial distribution.   The dimension independence in the other terms of this bound reflects that the finite-dimensional pHMC algorithm in \eqref{eq:transitionstep_TPS} converges to an infinite-dimensional pHMC algorithm whose transition kernel satisfies an infinite-dimensional analog of this quantitative bound.
\end{remark}

A proof of this result is given in \S\ref{sec:TPS_proofs}.

\subsection{Path Integral Molecular Dynamics}  \label{sec:PIMD}
Here we discuss the use of pHMC for path-integral molecular dynamics (PIMD), and as an application of Theorem~\ref{thm:CONTR}, obtain dimension-free contraction rates for preconditioned HMC in this context.  PIMD is used to compute exact Boltzmann properties and approximate dynamical properties of quantum mechanical systems \cite{ChandlerWolynes}. The technique is based on Feynman's path-integral formulation of quantum statistical mechanics \cite{Feynman1965}, and the observation that the quantum Boltzmann statistical mechanics of a quantum system can be reproduced by the {\em classical} Boltzmann statistical mechanics of a ring-polymer system \cite{ChandlerWolynes}.

Consider $N$ interacting quantum particles in 3D with potential energy operator given by
\begin{equation}
	\hat{V}=V(\hat{q}_1, \dots, \hat{q}_N)
	\label{eq:H_q}
\end{equation}
where $\hat{q}_i$ is the three-dimensional 
position operator of particle $i$ and $V: \mathbb{R}^d \to \mathbb{R}$ is a potential energy function where $d=3 N$ \cite{Habershon2013}.  The thermal equilibrium properties of this system are described by the quantum mechanical Boltzmann partition function,
\begin{equation}
	Q=\operatorname{Tr}[e^{-\beta \hat{V}}] 
\end{equation}
where $\beta$ is an inverse temperature parameter.  For some $\mathsf{a}>0$, suppose that the potential energy function can be written as \[
V(\cdot) = \frac{1}{2} \mathsf{a} |\cdot|^2 + G(\cdot)
\] where $G: \mathbb{R}^d \to \mathbb{R}$.  Then the partition function $Q$ can be written as the expected value of a Gaussian random variable on loop space as follows
\begin{equation}
    Q=\E[ e^{-U(\xi)} ] \;, \quad \text{where} \quad \xi \sim \mu_0 = \mathcal{N}(0,\mathcal{C}_{\mathsf{a}}) \;,
    	\label{eq:path_integral}
\end{equation}  and the covariance operator $\mathcal{C}_{\mathsf{a}}$ of the Gaussian reference measure is defined in terms of the  Laplacian with periodic boundary conditions $\Delta_P$ on $L^2([0,\beta],\mathbb{R}^{d})$ as follows \[
\mathcal{C}_{\mathsf{a}} = (-\Delta_P + \mathsf{a} I)^{-1} \;,
\] where $I$ is the identity operator and the potential energy $U(x)$ is given by \begin{equation}
U(x) = \int_0^{\beta} G(x(\mathsf{t}))  \mathsf{dt} \;.
    	\label{eq:U_PIMD}
\end{equation}
The probability measures $\mu_0$ and $\mu(dx) \propto \exp(- U(x) )\, \mathcal{N}(0, \mathcal{C}_{\mathsf{a}})(dx)$
are supported on the loop space consisting of all 
periodic continuous paths $x:[0,\beta ]\to\mathbb R^d$. 
They are similar to the corresponding measures 
considered for Transition Path Sampling, but 
there is an additional, artificially introduced
parameter $\mathsf{a}$ appearing in $\mathcal{C}_{\mathsf{a}}$.
This parameter is essential because $\Delta_P$ is not invertible since it has a zero (leading) eigenvalue corresponding to the `centroid mode' \cite{Lu2018}.\smallskip  

To implement PIMD on a computer, we use  finite-differences to truncate the infinite-dimensional path distribution $\mu$ to a finite-dimensional one $\mu_{m}$  by discretizing the interval $[0,\beta]$ into $m +1$ grid points \begin{equation} \label{grid_PIMD}
 \mathsf{t}_j = {\beta}j/{m} ,  \quad  j=0, \dots, m  \;.
\end{equation} The space of loops on $\mathbb{R}^d$ is approximated by the finite-dimensional space $\mathbb{R}^{m d}$.  Specifically, we write $\bph{x} \in \mathbb{R}^{m d}$ as \[
\bph{x} = (\bph{x}_{1:d}, \dots, \bph{x}_{m+1:m+d})
\] where  $\bph{x}_{j+1:j+d}:=(\bph{x}_{j+1}, \dots, \bph{x}_{j+d})$ is a $d$-dimensional vector that can be viewed as an approximation of $x(\mathsf{t}_j)$ for $j=1,\dots,m$.

\begin{remark}
Comparing \eqref{grid_TPS} to \eqref{grid_PIMD}, note that the number of grid points in TPS, resp.~PIMD, is $m+2$, resp.~$m+1$.  Nonetheless, in both cases path and loop space are approximated by $\mathbb{R}^{m d}$.  The difference in the number of grid points is due to the boundary conditions: in TPS the Dirichlet boundary conditions eliminate two unknown $d$-dimensional vectors, whereas in PIMD the periodic boundary conditions eliminate only one unknown $d$-dimensional vector.  Thus, the total number of unknowns in both cases is $m d$.
\end{remark}

The periodic Laplacian $\Delta_P$ is approximated by the $m d \times m d$ discrete periodic Laplacian matrix $\bph{\Delta_{P,m}}$ with $(i,j)$-th entry \[
(\bph{\Delta_{P,m}})_{i,j} = \begin{dcases} - 2 \left( \beta /{m} \right)^{-2} & \text{if $|i-j|=0$,} \\
 \left( \beta /{m} \right)^{-2} & \text{if~$(i-j) \bmod ( m d )=d~\text{or}~(j-i) \bmod (m d)=d$,} \\
 0 & \text{otherwise.} 
\end{dcases} \]  Naturally, the covariance operator $\mathcal{C}_{\mathsf{a}}$ is approximated by the $md \times md$ matrix $\bph{\mathcal{C}}_{\mathsf{a}} = (- \bph{\Delta_{P,m}} + \mathsf{a} \bph{I}_{m d \times m d})^{-1}$ where $\bph{I}_{m d \times m d}$ is the $m d \times m d$ identity matrix, and the infinite-dimensional Hilbert space $\mathcal{H}$ is represented by $\mathbb{R}^{m d}$ with inner product given by the weighted scalar product $\langle \bph{x}, \bph{y} \rangle = \frac{\beta}{m} \bph{x} \bullet \bph{y} $. The functional in \eqref{eq:U_TPS} is discretized as \[
U_{m}( \bph{x}) = \frac{\beta}{m} G_{m}(\bph{x})
\;, \quad \text{where} \quad G_{m}(\bph{x}) =
\sum_{j=1}^{m}  G(\bph{x}_{j+1:j+d}) \;.
\] In summary, the infinite-dimensional path distribution $\mu(dx)$ is approximated by the finite-dimensional distribution $\mu_{m}(d \bph{x}) \propto \exp(-U_{m}(\bph{x}))  \mathcal{N}(0,\frac{m}{\beta} \bph{\mathcal{C}}_{\mathsf{a}})(d \bph{x})$.

In this context, pHMC generates a Markov chain on $\mathbb{R}^{m d}$ with invariant measure $\mu_{m}$ and with transition step given by \begin{equation} \label{eq:transitionstep_PIMD}
\bph{x} \mapsto \bph{X}'(\bph{x}) ~:=~ \bph{q}_T(\bph{x}, \bph{\xi}) \;, \qquad \bph{\xi} \sim \mathcal{N}(0, \frac{m}{\beta} \bph{\mathcal{C}}_{\mathsf{a}})
\end{equation} 
where $\bph{q}_t$ solves \eqref{eq:dynamics_TPS} with $    \bph{b}(\bph{x}) = -  \bph{x} - \bph{\mathcal{C}}_{\mathsf{a}} \nabla  G_{m} (\bph{x})$.  


\begin{theorem}[Path Integral Molecular Dynamics]
\label{thm:CONTR_PIMD}
Suppose that Assumption~\ref{G123} holds.
Let $\kappa:=6 \mathsf{a}^{-1} L_G $, 
$m_{\ell}=\lceil  \sqrt{3 L_G/2} (\beta/\pi) \rceil$,
$n=2 m_{\ell} d -d$, and 
 $m^{\star}=\lceil 2 \pi m_{\ell} \rceil$. Let $R$, $c$, $C$ and $\epsilon$ be defined as
\begin{align}
R \ &:= \ 16 \sqrt{20} (1+\kappa)^{3/2} ( (1/2)  \beta \mathsf{a}^{-1} M_G^2 + 2 d (\beta^2 \mathsf{a} + 1) )^{1/2} \;, \label{R_PIMD} \\
 \quad c \ &:= \  \min( (1/32) T^2, (1/128) T^2 \max(R,T) e^{-\max(R,T)/T} ) \;, \label{crate_PIMD} \\
C \ &:= \ \max(2 T a^{1/2}, 23 ((1/2)  \beta \mathsf{a}^{-2} M_G^2 + 2 d (\mathsf{a}^{-1} + \beta^2)  )^{1/2} e^{R/(2 T)})  \;,  \label{C_PIMD} \\
\epsilon \ &:= \  (1/80) \mathsf{a}^2 ( \beta  M_G^2 )^{-1} e^{-R/T} \;. \label{eps_PIMD}  
\end{align}
 Suppose that the duration parameter $T \in (0, R)$ satisfies
\begin{align}
(1+\kappa )^{3/2} T^2 \ &\le\  \min\left( \frac{1}{96 (1+ \kappa)}, \frac{1}{256 (1+\kappa )^{3/2} R^2} \right) \;. \label{A0A_PIMD} 
\end{align}
Then for any $m > m^{\star}$, $k\in \mathbb{N}$, and probability measure $\nu_m$ on  $\mathbb{R}^{m d}$,
\eqref{QBpHMC_TPS} holds for the transition kernel of \eqref{eq:transitionstep_PIMD}.
\end{theorem}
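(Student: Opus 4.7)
The plan is to deduce Theorem~\ref{thm:CONTR_PIMD} from Corollary~\ref{cor:QBHMC} by verifying Assumptions~\ref{C123} and~\ref{B123} in the PIMD setting with $s=0$. Set $\H^s=\mathbb R^{md}$ with the weighted inner product $\langle\bph{x},\bph{y}\rangle=(\beta/m)\bph{x}\bullet\bph{y}$; a direct computation shows that in this inner product the covariance operator of $\mathcal N(0,(m/\beta)\bph{\mathcal C}_{\mathsf a})$ equals $\bph{\mathcal C}_{\mathsf a}$, so I take $\tilde{\bph{\mathcal C}}=\bph{\mathcal C}=\bph{\mathcal C}_{\mathsf a}$ (the factor $m/\beta$ in \eqref{eq:transitionstep_PIMD} arises from translating between an Euclidean covariance matrix and a weighted-inner-product covariance operator). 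The key spectral fact is that $\bph{\mathcal C}_{\mathsf a}$ is diagonal in the discrete Fourier basis with centroid eigenvalue $1/\mathsf a$ and $k$-th non-centroid eigenvalue $1/((2/h^2)(1-\cos(2\pi k/m))+\mathsf a)$ where $h=\beta/m$; the constraint $m>m^\star=\lceil 2\pi m_\ell\rceil$ guarantees $2\pi k/m\le 1$ for all $k\le m_\ell$, a regime in which the discrete eigenvalues are well approximated by their continuum counterparts $(2\pi k/\beta)^2+\mathsf a$. Assumption~\ref{C123} is then automatic in finite dimensions.

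The next step is to check Assumption~\ref{B123}. For (B1), the Euclidean Lipschitz constant of $\nabla G$ pulls back to the same constant $L_g=L_G$ for $\nabla G_m$ acting in the weighted norm, and combined with $\lVert\bph{\mathcal C}_{\mathsf a}\rVert_{\mathrm{op}}=1/\mathsf a$ and the identity term $-\bph x$ this yields $L\le 1+L_G/\mathsf a=1+\kappa/6$. For (B2) I split at $n=2m_\ell d-d$ (the centroid mode of multiplicity $d$ together with the $m_\ell-1$ cosine/sine pairs, each of multiplicity $d$), and use a quantitative estimate such as $1-\cos\theta\ge(1-\cos 1)\theta^2$ valid on $[0,1]$ to show $\lVert\bph{\mathcal C}_{\mathsf a}^h\rVert_{\mathrm{op}}L_G\le 1/3$ whenever $m>m^\star$. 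For (B3), Cauchy-Schwarz together with the uniform bound $\lVert\nabla G_m\rVert\le M_G\sqrt{\beta}$ and Young's inequality gives $\langle\bph x,\bph b(\bph x)\rangle\le-\tfrac12\lVert\bph x\rVert^2+\beta M_G^2/(2\mathsf a^2)$, hence $K=1/2$ and $A=\beta M_G^2/(2\mathsf a^2)$.

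Next I would compute the spectral constants entering Theorem~\ref{thm:CONTR}. Since $\tilde\lambda_j=\lambda_j(\bph{\mathcal C}_{\mathsf a})$ and $s=0$, one has $\sigma_j^2=1/\tilde\lambda_j$ for $j\le n$, giving $\sigma_{\min}^2=\mathsf a$ and $\sigma_{\max}^2\le(2\pi m_\ell/\beta)^2+\mathsf a\le 6L_G+\mathsf a=\mathsf a(1+\kappa)$ after absorbing the ceiling correction in the definition of $m_\ell$. Hence $(\sigma_{\max}/\sigma_{\min})L\le\sqrt{1+\kappa}\,(1+\kappa/6)\le(1+\kappa)^{3/2}$, which converts condition~\eqref{A0A} into~\eqref{A0A_PIMD}. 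A trace bound $\tr(\tilde{\bph{\mathcal C}})\le d/\mathsf a+d\beta^2/12$ follows by dominating the discrete trace by its continuum analog and using $\sum k^{-2}=\pi^2/6$; combining with $A$ yields $A+\tr(\tilde{\bph{\mathcal C}})\le\tfrac12\beta\mathsf a^{-2}M_G^2+2d(\mathsf a^{-1}+\beta^2)$, which then reproduces $R$ in~\eqref{R_PIMD} via~\eqref{ieq:R}, $\epsilon$ in~\eqref{eps_PIMD} via~\eqref{Ce}, and $C$ in~\eqref{C_PIMD} via~\eqref{Cdefi}. The rate~\eqref{crate_PIMD} follows directly from~\eqref{crate} with $K=1/2$, and finally Corollary~\ref{cor:QBHMC} produces the Wasserstein bound~\eqref{QBpHMC_TPS}.

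The main obstacle is the discrete-vs.-continuum spectral comparison for the periodic Laplacian: verifying that $m>m^\star$ is sharp enough for the bound $\lVert\bph{\mathcal C}_{\mathsf a}^h\rVert_{\mathrm{op}}L_G\le 1/3$ in (B2) with the specific choice of $m_\ell$, and making the spectral estimates tight enough so that the abstract quantity $(\sigma_{\max}/\sigma_{\min})L$ really collapses to the clean $(1+\kappa)^{3/2}$ appearing in~\eqref{A0A_PIMD}. A secondary, more tedious task is tracking numerical constants through the chain of abstract bounds so that they match those in the statements of $R$, $C$, $\epsilon$, and $c$; no single step is conceptually difficult, but keeping everything consistent across the dozen or so inequalities is the most laborious part of the proof.
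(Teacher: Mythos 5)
Your proposal follows essentially the same route as the paper's proof: verify (B1)--(B3) with dimension-free constants ($L\le 1+\kappa$, $K=1/2$, $A=\tfrac12\beta\mathsf a^{-2}M_G^2$, $n=2m_\ell d-d$), compute $\sigma_{min}=\mathsf a^{1/2}$ and $\sigma_{max}\le(\mathsf a+6L_G)^{1/2}$, bound the trace, check \eqref{ieq:R} and reduce \eqref{A0A} to \eqref{A0A_PIMD}, and invoke Corollary~\ref{cor:QBHMC}. Your elementary estimates on $1-\cos\theta$ play the role of the paper's Lemma~\ref{eigs_PIMD}, and your slightly tighter Lipschitz constant $1+\kappa/6$ is harmlessly absorbed into $(1+\kappa)^{3/2}$.

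One step would fail as written: the trace bound. You cannot ``dominate the discrete trace by its continuum analog,'' because the inequality goes the other way. The eigenvalues of $\bph{\mathcal C}_{\mathsf a}$ are $(\mathsf a+\omega_k^2\sin^2(\theta_k)/\theta_k^2)^{-1}$, and since $\sin^2\theta/\theta^2\le 1$ these \emph{exceed} the continuum eigenvalues $(\mathsf a+\omega_k^2)^{-1}$; hence $\tr(\bph{\mathcal C}_{\mathsf a})>\tr(\mathcal C_{\mathsf a})$, and your intermediate bound $d/\mathsf a+d\beta^2/12$ is not valid for the discrete operator (near $\theta_k\approx\pi/2$ the discrete eigenvalue is larger by a factor close to $\pi^2/4$). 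What is needed is a quantitative perturbation bound such as $\bph\Lambda_{\varphi(k,1)}-\lambda_{\varphi(k,1)}\le 2\lambda_{\varphi(k,1)}\theta_k^2$ (Lemma~\ref{eigs_PIMD}(E1)), whose summation gives $\tr(\bph{\mathcal C}_{\mathsf a})\le\tr(\mathcal C_{\mathsf a})+d\beta^2\le 2d(\mathsf a^{-1}+\beta^2)$. Your final target constant $2d(\mathsf a^{-1}+\beta^2)$ has enough slack to absorb the corrected estimate, so the fix is routine, but the domination argument itself is backwards. A second, purely cosmetic point: the frequency attached to the $m_\ell$-th retained block is $2\pi(m_\ell-1)/\beta$, not $2\pi m_\ell/\beta$, and it is this index shift (rather than ``absorbing the ceiling'') that makes $\sigma_{max}^2\le\mathsf a+6L_G$ compatible with $m_\ell=\lceil\sqrt{3L_G/2}\,\beta/\pi\rceil$.
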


A proof of this result is given in \S\ref{sec:PIMD_proofs}.

\subsection{Numerical Illustration of Couplings}

Before turning to the proofs of our main results, we test the two-scale coupling defined by \eqref{**} numerically on the following distributions.  
\begin{itemize}
\item A TPS distribution with the three-well path potential energy function illustrated in Figure~\ref{fig:coupling_sample_paths} (a).  The initial conditions of the components of the coupling are taken to be paths that pass through the two channels that connect the bottom two wells located at $x^{\pm} \approx (\pm 1.048, -0.042)$. 
\item A PIMD distribution where the underlying potential energy is the negative logarithm of the normal mixture density illustrated in Figure~\ref{fig:coupling_sample_paths} (b). The mixture components are twenty two-dimensional Gaussian distributions with covariance matrix given by the $2 \times 2$ identity matrix and with mean vectors given by $20$ independent samples from the uniform distribution over the rectangle $[0, 10] \times [0,10]$.  The energy barriers are not large.  The potential energy in this example is adapted from \cite{LiWo2001,KoZhWo2006}.   The initial paths are taken to be two unit circles one centered at $(1,1)$ and the other centered at $(9,9)$.  The parameter $\mathsf{a}$ is selected to be $0.1$.
\item A PIMD distribution where the underlying potential energy is the negative logarithm of the Laplace mixture density illustrated in Figure~\ref{fig:coupling_sample_paths} (c).  The mixture components are twenty two-dimensional (regularized) Laplace distributions using the same covariance matrix and
mean vectors as in the preceding example.  However, unlike the preceding example, in this example the underlying potential is only asymptotically convex.  The initial paths are taken to be two unit circles one centered at $(1,1)$ and the other centered at $(9,9)$.
The parameter $\mathsf{a}$ is selected to be $0.1$.
\item A PIMD distribution where the underlying potential energy is the banana-shaped potential energy illustrated in Figure~\ref{fig:coupling_sample_paths} (d). This function is highly non-convex and unimodal with a global minimum at the point $(1,1)$.  This minimum lies in a long, narrow, banana shaped valley.  The initial paths are taken to be small circles centered at $(\pm 4,16)$.  The parameter $\mathsf{a}$ is selected to be $1.0$.
\end{itemize} 
 For the TPS and PIMD distributions we use the finite-dimensional approximations described in \S\ref{sec:TPS} and~\S\ref{sec:PIMD}, respectively.   The resulting semi-discrete evolution equations are discretized in time using a strongly stable symmetric splitting integrator \cite{BePiSaSt2011,KoBoMi2019}.  We describe this integrator in the specific context of TPS, since a very similar method is used for PIMD with
 $\bph{\mathcal{C}}_{\mathsf{a}}$ replacing $\bph{\mathcal{C}}$ in the dynamics, and 
 the covariance matrix $(m/\beta) \bph{\mathcal{C}}_{\mathsf{a}}$ replacing $((m+1)/\tau) \bph{\mathcal{C}}$ in the velocity randomization step.  First, split \eqref{eq:dynamics_TPS} with $\bph{b}(\bph{x}) = \bph{x}-\bph{\mathcal{C}} \nabla G_{m}(\bph{x})$ into \begin{equation} \label{eq:AA}
 (\mathrm{A}):\qquad    \dot{\bph{q}}_t = \bph{v}_t \;, \quad \dot{\bph{v}}_t = -\bph{q}_t
\end{equation}
\begin{equation} \label{eq:BB}
(\mathrm{B}):\qquad    \dot{\bph{q}}_t = 0 \;, \quad \dot{\bph{v}}_t = -\bph{\mathcal{C}} \nabla G_{m}(\bph{q}_t)
\end{equation}
with corresponding flows explicitly given by
\begin{center}
\begin{tabular}{c}
$\varphi_t^{(\mathrm{A})}(\bph{q}_0,\bph{v}_0) = 
(\cos(t) \bph{q}_0 + \sin(t) \bph{v}_0 \;,  -\sin(t) \bph{q}_0 + \cos(t) \bph{v}_0)$ \\
$\varphi_t^{(\mathrm{B})}(\bph{q}_0,\bph{v}_0) = (\bph{q}_0, \bph{v}_0 - t \bph{\mathcal{C}} \nabla G_{m}(\bph{q}_0))$.
\end{tabular}
\end{center}
Given a time step size $\Delta t>0$, and using these exact solutions, a $\Delta t$ step of the symmetric splitting integrator we use is given by \begin{equation} \label{psi_dt}
\psi_{\Delta t} = \varphi_{\Delta t/2}^{(\mathrm{B})} \circ \varphi_{\Delta t}^{(\mathrm{A})} \circ \varphi_{\Delta t/2}^{(\mathrm{B})} \;.
\end{equation}  In order to mitigate the effect of periodicities or near periodicities in the underlying dynamics, we choose the number of integration steps to be geometrically distributed with mean $T/\Delta t$.  The idea of duration randomization has a long history \cite{duane1985stochastic,duane1986theory, Ma1989, CaLeSt2007, BoSa2016, BoSaActaN2018}. The initial velocity is taken to be an $m d$-dimensional standard normal vector with covariance matrix $((m+1)/\tau) \bph{\mathcal{C}}$ and a Metropolis accept/reject step is added to ensure the algorithm leaves invariant $\mu_m$ \cite{tierney1998note, BoSaActaN2018}.  In summary, we use the following transition step in the simulations.


\begin{algorithm}[Numerical Randomized pHMC] \label{algo:numerical_rhmc}
Denote by $T>0$ the duration parameter and let $\psi_{\Delta t}$ be the time integrator described in \eqref{psi_dt}.  Given the current state of the chain $\bph{x} \in \mathbb{R}^{m d}$, the algorithm outputs the next state of the chain $\bph{X} \in \mathbb{R}^{m d}$ as follows.
\begin{description}
\item[Step 1] Generate a $d$-dimensional random vector $\bph{\xi} \sim \mathcal{N}(\bph{0},((m+1)/\tau) \bph{\mathcal{C}})$.
\item[Step 2] Generate a geometric random variable $k$ supported on the set $\{1, 2, 3, ... \}$ with mean $T/ \Delta t$.
\item[Step 3] Output $\bph{X} = \gamma \tilde{\bph{q}}_k + (1-\gamma) \bph{x}$ where $(\tilde{\bph{q}}_k, \tilde{\bph{v}}_k) = \psi_{\Delta t}^k(\bph{x}, \bph{\xi})$, and given $\bph{\xi}$ and $k$, $\gamma$ is
    a Bernoulli random variable with parameter $\alpha$ defined as \[
    \alpha = \min\{ 1, \exp\left(- [ \mathcal{E}(\tilde{\bph{q}}_k, \tilde{\bph{v}}_k) - \mathcal{E}(\bph{x},\bph{\xi}) ]\right) \}
    \] where 
    $\mathcal{E}(\bph{x},\bph{v}) = (1/2) \langle \bph{v}, \bph{\mathcal{C}}^{-1} \bph{v} \rangle    + U_{m}(\bph{x})+  (1/2) \langle \bph{x} , \bph{\mathcal{C}}^{-1} \bph{x} \rangle$.
\end{description}
\end{algorithm}

We stress that $\bph{\xi}$ and $k$ from (Step 1) and (Step 2) are mutually independent and independent of the state of the Markov chain associated to pHMC.  We pick the time step size $\Delta t$ of the integrator sufficiently small to ensure that $99\%$ of proposal moves are accepted on average in (Step 3).


Realizations of the coupling process are shown in Figure~\ref{fig:coupling_sample_paths}.   We chose parameters only for visualization purposes.  The different components of the coupling are shown as different color dots.  The insets of the figures show the distance between the components of the coupling as a function of the number of steps.

Figure~\ref{fig:coupling_times} shows the average time after which
 the distance between the components of the coupling is for the first time within $10^{-12}$.  To produce this figure, we generated one hundred samples of the coupled process for one hundred different values of the duration parameter $T$.  As indicated in the figure legends, the coupling parameter $\gamma$ is set equal to either:  
 \begin{itemize}
\item $\gamma=0$ which corresponds to a synchronous coupling of the initial velocities;
\item $\gamma=T^{-1}$ which corresponds to the optimal coupling of the initial velocities when $\bph{b}(\bph{x})=0$; and,
\item $\gamma=\cot(T)$ which corresponds to the optimal coupling of the initial velocities when $\bph{b}(\bph{x})=-\bph{x}$.
\end{itemize}

\begin{figure}[t]
\begin{center}
\includegraphics[width=0.2\textwidth]{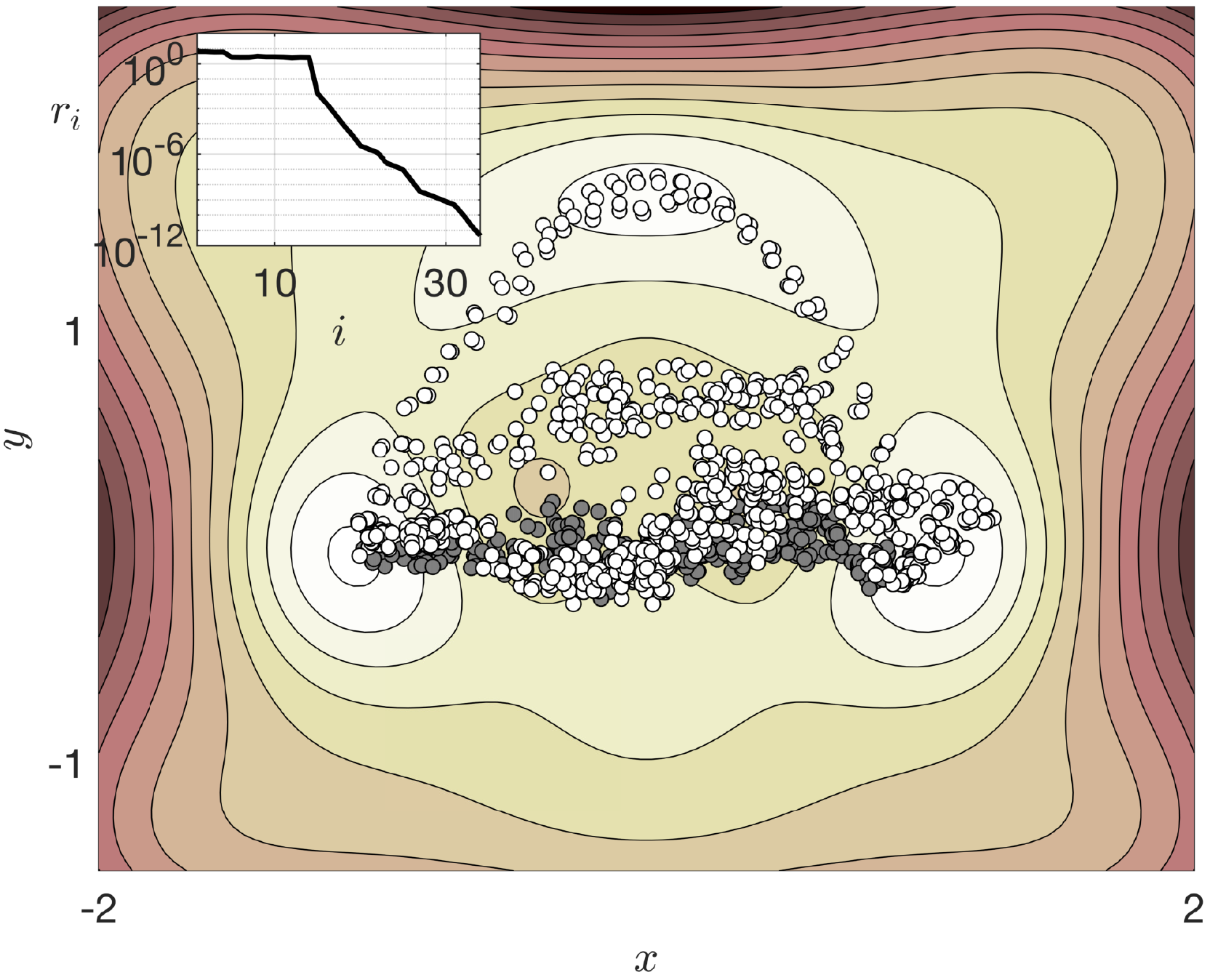} 
\hspace{0.125in}
\includegraphics[width=0.2\textwidth]{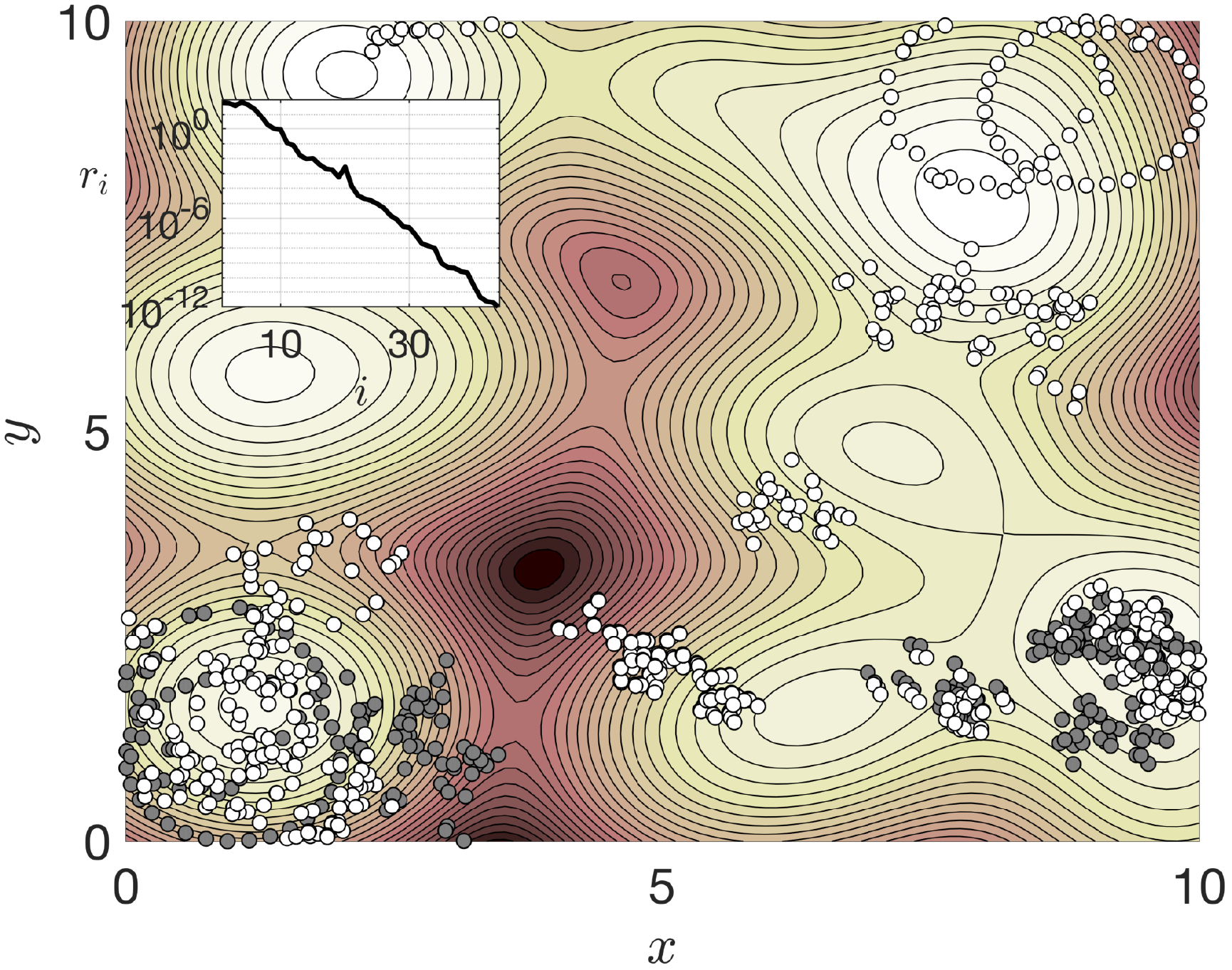} 
\hspace{0.125in}
\includegraphics[width=0.2\textwidth]{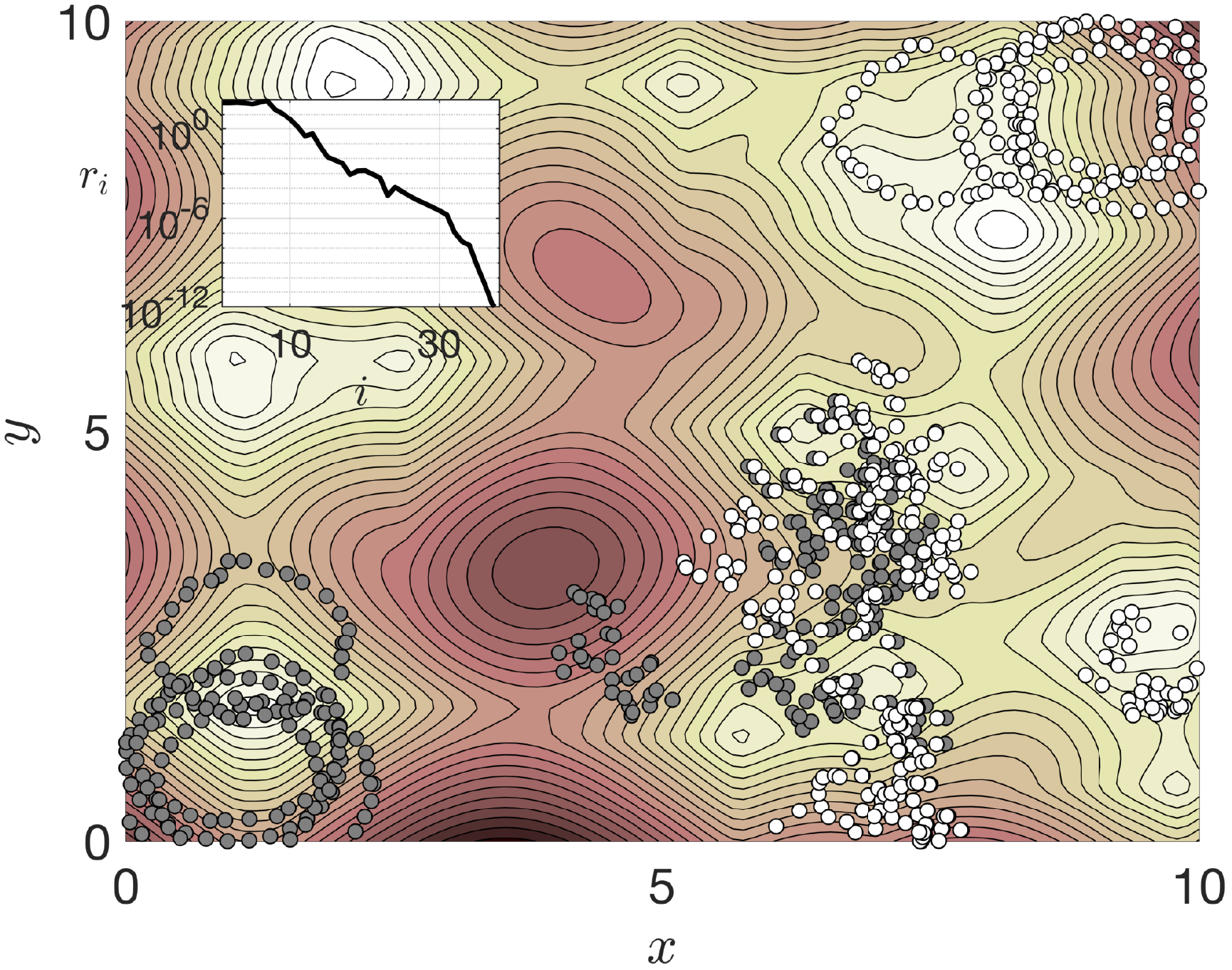}
\hspace{0.125in}
\includegraphics[width=0.2\textwidth]{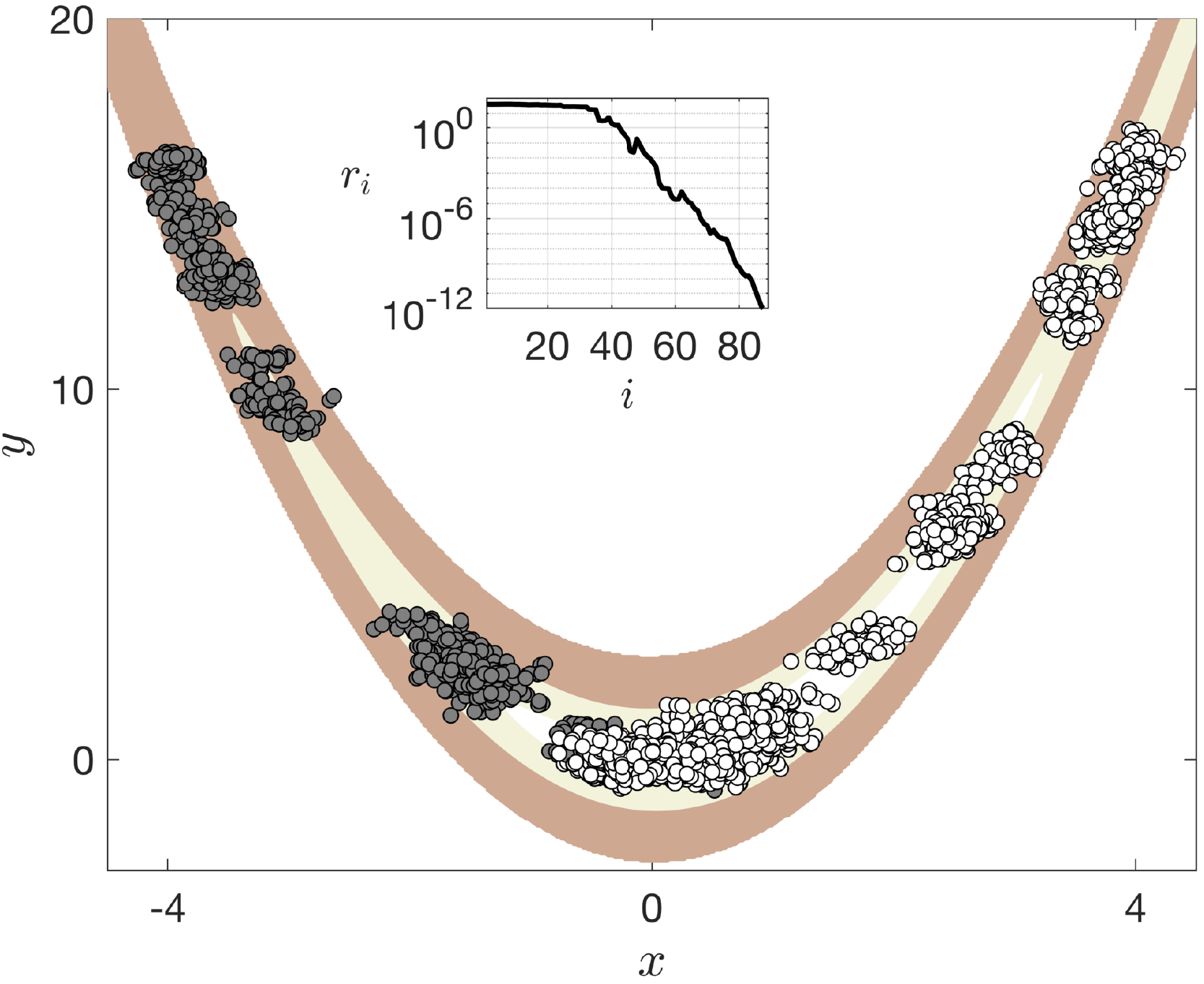}
\hbox{
            (a) 
        \hspace{1.in} (b) 
        \hspace{1.in} (c) 
         \hspace{1.in} (d) 
        } 
\end{center}
\caption{ \small {\bf Realizations of Two-Scale Coupling.}
This figure illustrates realizations of the coupling with $T=\gamma^{-1}$.   The different components of the coupling are shown as different color dots.  
A contour plot of the underlying potential energy function is shown in the background.
The inset plots the distance $r_i$ between the components of the coupling as a function of the step index $i$. 
The simulation is terminated when this distance first reaches $10^{-12}$.  
 In (a), (b), (c), and (d), this occurs in 34, 44, 38, and 88 steps, respectively.
 }
 \label{fig:coupling_sample_paths}
\end{figure}

\begin{figure}[t]
\begin{center}
\includegraphics[width=0.2\textwidth]{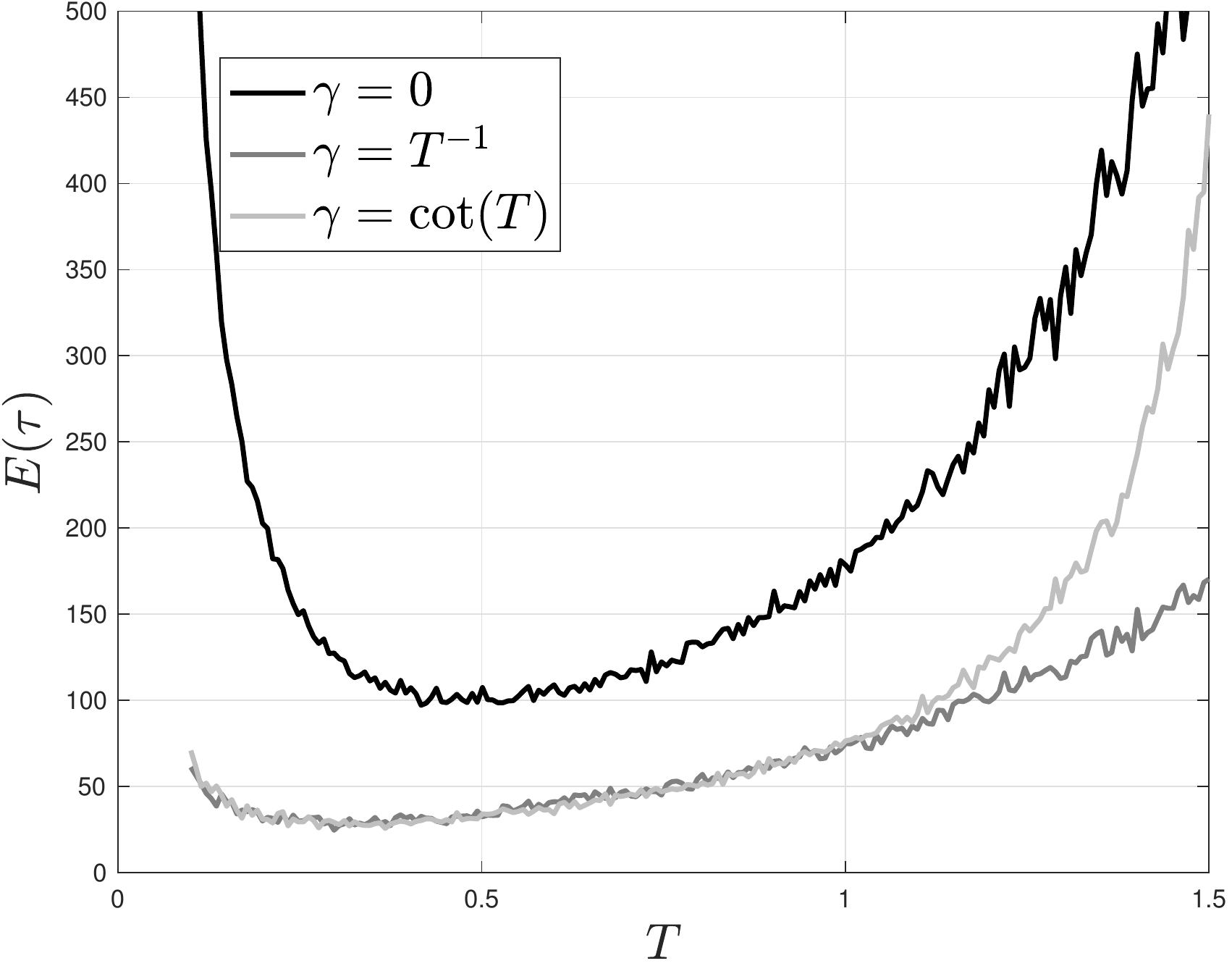} 
\hspace{0.125in}
\includegraphics[width=0.2\textwidth]{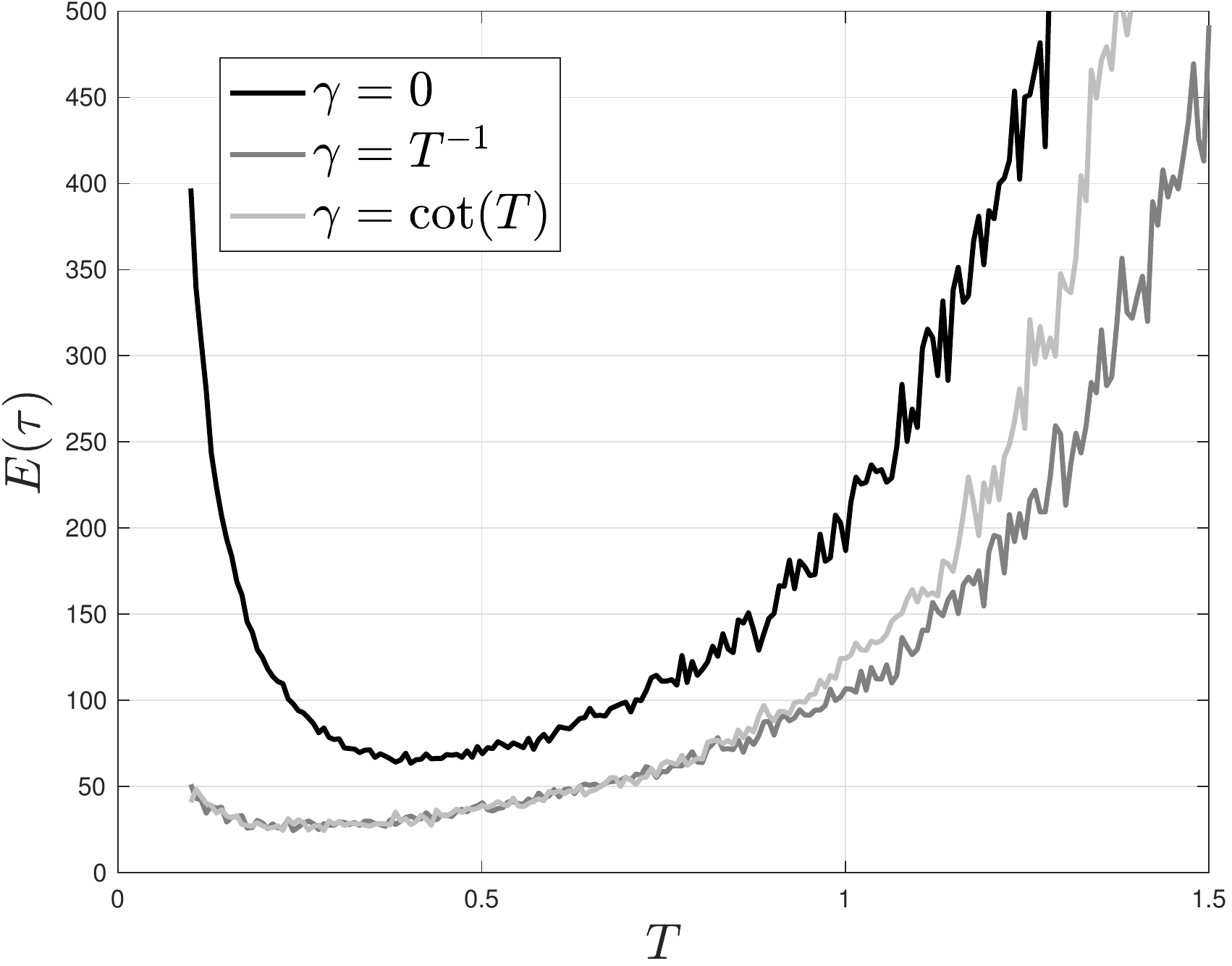} 
\hspace{0.125in}
\includegraphics[width=0.2\textwidth]{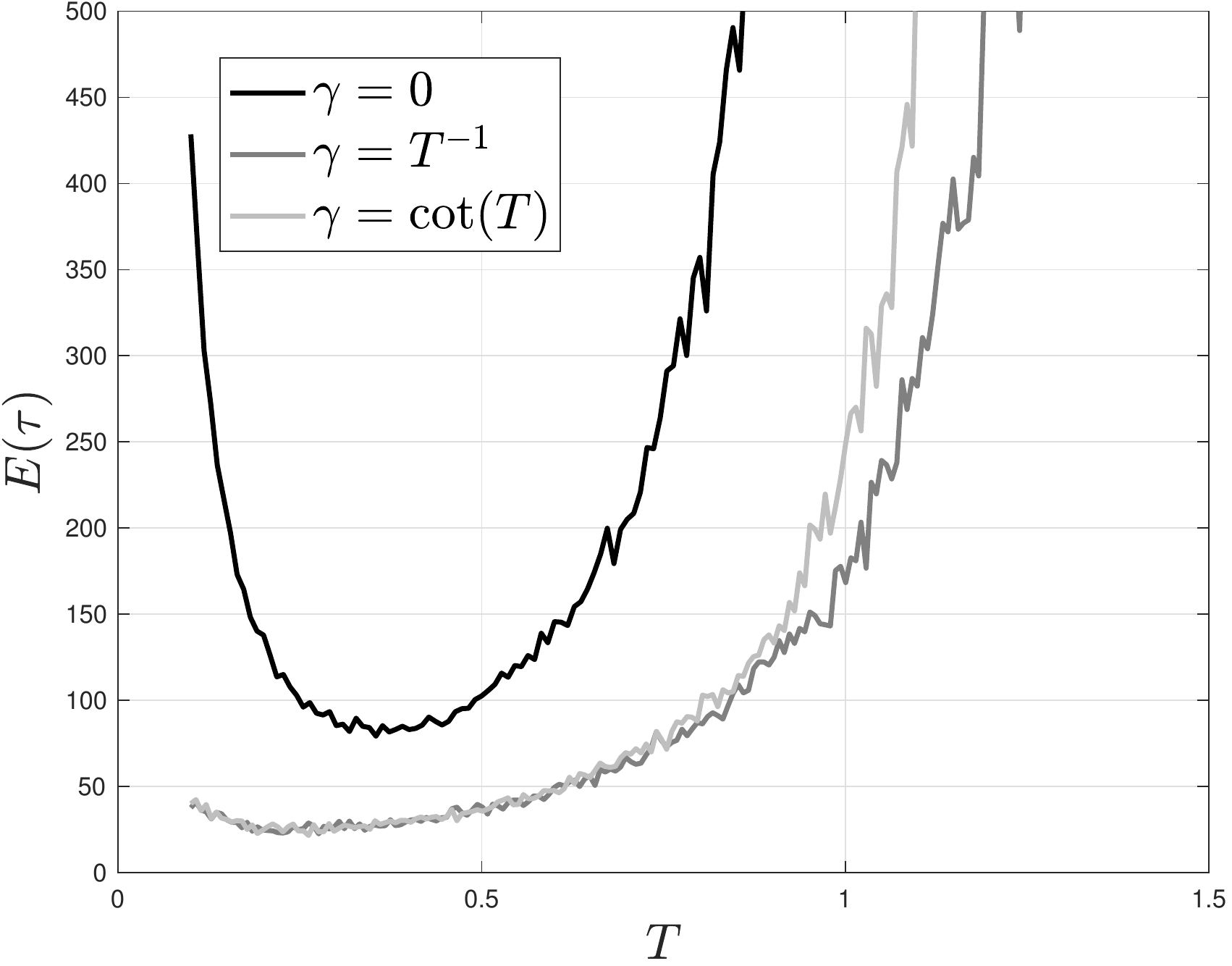}
\hspace{0.125in}
\includegraphics[width=0.2\textwidth]{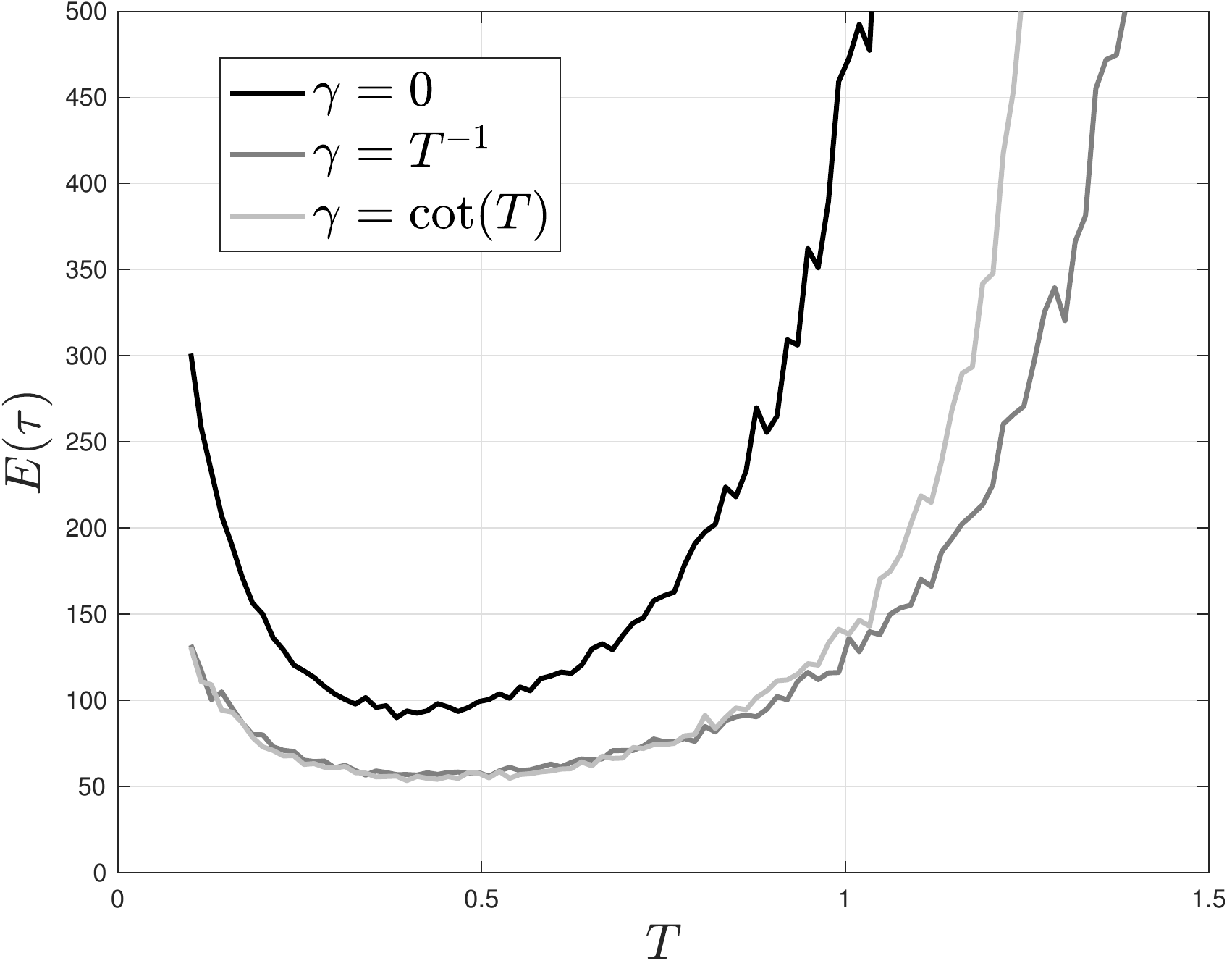}
\hbox{
            (a) 
        \hspace{1.in} (b) 
        \hspace{1.in} (c) 
         \hspace{1.in} (d) 
        } 
\end{center}
\caption{ \small {\bf Mean Coupling Times.}
This figure illustrates the average of the random time $\tau$ after which 
 the distance between the components of the coupling is for the first time within $10^{-8}$. The estimated average is plotted
as a function of the duration $T$ of the Hamiltonian dynamics
for $\gamma=0$ (black), $\gamma=T^{-1}$ (gray), and $\gamma=\cot(T)$ (light gray).    In all cases, note that the minimum of the function
is smaller and occurs at a smaller value of $T$ when $\gamma=T^{-1}$ or $\gamma=\cot(T)$, and the difference between the minima for $\gamma=T^{-1}$ and $\gamma=\cot(T)$ is slight, because these minima occur at $T\le1$ where $\cot(T) \approx T^{-1}$.
 }
 \label{fig:coupling_times}
\end{figure}


\section{A priori bounds}\label{sec:apriori}

In this section we gather several bounds for the dynamics and for the coupling that will be crucial in the proof of our main result. 

\subsection{Bounds for the dynamics}\label{sec:main_results:boundsfordynamics} 

In the following, we assume throughout that Assumption \ref{B123} is satisfied, and
\begin{eqnarray}
	\L t^2 &\leq& 1. \label{0}
\end{eqnarray}
Recall that $\phi_t=(q_t,v_t)$ denotes the flow of \eqref{eq:dynamicsgeneral}.  With the exception of using a different norm, the proofs of Lemmas~\ref{0A} and~\ref{1A} below are identical to the proofs of Lemmas~3.1 and 3.2 in Ref.~\cite{BoEbZi2018} and therefore not repeated.

\begin{lemma}\label{0A}
For any $x,v\in\H^s$,
\begin{eqnarray}
\label{6}
\ \ \sup_{r\le t} \snorm{q_r(x,v)-(x+rv)}&\leq& \L t^2
\max(\snorm{x},\snorm{x+tv}),\quad\mbox{and}
\\ \label{7}
\sup_{r\leq t} \snorm{v_r(x,v)-v} &\leq &
\L t\, \sup_{r\le t}\snorm{q_r(x,v)}\\
\nonumber &\le &
\L t (1+\L t^2) \max(\snorm{x},\snorm{x+tv}).
\end{eqnarray}
In particular,
\begin{eqnarray}\label{8}
\sup_{r\leq t}\snorm{q_r(x,v)} &\leq& 2 \max(\snorm{x},\snorm{x+tv}),\qquad\mbox{and}\\
\label{9}
\sup_{r\leq t}\snorm{v_r(x,v)} &\leq&
\snorm{v} + 2 \L t  \max(\snorm{x},\snorm{x+tv}).
\end{eqnarray}
\end{lemma}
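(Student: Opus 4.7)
The plan is to rewrite the ODE \eqref{eq:dynamicsgeneral} in integrated form
\[
q_r(x,v)=x+rv+\int_0^r(r-u)\,b(q_u(x,v))\,du,\qquad v_r(x,v)=v+\int_0^r b(q_u(x,v))\,du,
\]
and then exploit the two structural facts supplied by Assumption \ref{B123}(B1): that $b(0)=0$ together with the global Lipschitz bound gives the linear growth $\snorm{b(x)}\le \L\snorm{x}$, and that the free line $r\mapsto x+rv$ is a convex combination of the endpoints $x$ and $x+tv$ on $[0,t]$. The latter yields, for every $r\in[0,t]$, $\snorm{x+rv}\le \max(\snorm{x},\snorm{x+tv})=:M$. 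All four bounds will then be extracted from a single Gronwall/bootstrap step in which the smallness condition $\L t^2\le 1$ closes the inequality.

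First I would establish \eqref{8} together with \eqref{6} via a continuity argument. Setting $g(r):=\sup_{u\le r}\snorm{q_u(x,v)}$, the integrated equation and (B1) give
\[
\snorm{q_r-(x+rv)}\ \le\ \L\int_0^r(r-u)\,\snorm{q_u}\,du\ \le\ \tfrac12\L r^2\, g(r),
\]
so, combined with $\snorm{x+rv}\le M$,
\[
g(r)\ \le\ M+\tfrac12\L r^2\, g(r)\quad\text{for }r\le t.
\]
Since $\L t^2\le 1$, the factor $1-\tfrac12\L r^2\ge \tfrac12$, which yields $g(t)\le 2M$, i.e.\ \eqref{8}. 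Plugging this back into the integral bound gives $\snorm{q_r-(x+rv)}\le \L r^2 M\le \L t^2 M$, proving \eqref{6}. A cleaner sharpening used below is $\snorm{q_r}\le M+\L t^2 M=(1+\L t^2)M$, obtained by inserting \eqref{6} into the triangle inequality.

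For the velocity bounds I would estimate directly from the integrated form of $v_r$:
\[
\snorm{v_r-v}\ \le\ \int_0^r\snorm{b(q_u)}\,du\ \le\ \L\int_0^r\snorm{q_u}\,du\ \le\ \L t\,\sup_{u\le t}\snorm{q_u},
\]
which is the first half of \eqref{7}; inserting the sharpened bound $\sup_{u\le t}\snorm{q_u}\le (1+\L t^2)M$ from the previous step gives the second half. Finally, \eqref{9} follows from \eqref{7}, the triangle inequality, and the hypothesis $\L t^2\le 1$ to absorb the $(1+\L t^2)$ factor into a clean constant~$2$.

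The only substantive step is the bootstrap that produces \eqref{8}; the obstacle there is ensuring that one never has to bound $\snorm{q_r}$ in terms of $\snorm{x}$ and $\snorm{v}$ separately (which would force an unwanted factor of $t\snorm{v}$), and this is precisely why the maximum $M=\max(\snorm{x},\snorm{x+tv})$ on the segment endpoints is used in place of the endpoints individually. Everything else is a routine consequence.
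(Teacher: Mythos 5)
Your proof is correct, and it follows essentially the same route as the argument the paper defers to (Lemmas 3.1--3.2 of \cite{BoEbZi2018}): the Duhamel representation of $q_r$ and $v_r$, the convexity observation $\snorm{x+rv}\le\max(\snorm{x},\snorm{x+tv})$ along the free segment, and a bootstrap closed by $\L t^2\le 1$. The only point worth making explicit is that $g(r)=\sup_{u\le r}\snorm{q_u}$ is finite (global existence of the flow under the Lipschitz condition), which your ``continuity argument'' implicitly covers.
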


\begin{lemma}\label{1A}
	For any $x,y,u,v\in\H^s$,
	\begin{eqnarray}		
\lefteqn{\sup_{r\leq t} \snorm{q_r(x,u)-q_r(y,v)-(x-y)-r(u-v)}}\nonumber\\
\label{10} 		& \leq &   \L t^2 \max\left(\snorm{x-y},\snorm{(x-y)+t(u-v)}\right),\qquad\mbox{and}
	\\
\nonumber\lefteqn{	\sup_{r\leq t} \snorm{v_r(x,u)-v_r(y,v)-(u-v)} 
\ \le\ \L t\, \sup_{r\le t} \snorm{q_r(x,u)-q_r(y,v)}}
\\ &\leq  &  
		\L t (1+\L t^2)\max\left(\snorm{x-y},\snorm{(x-y)+t(u-v)}\right) .\label{11}
	\end{eqnarray}
	In particular,
	\begin{eqnarray}
		\label{12}
\sup_{r\leq t} \snorm{q_r(x,u)-q_r(y,v)} \ \leq \  (1 + \L t^2) \max(\snorm{x-y},\snorm{(x-y)+t(u-v)}).
	\end{eqnarray}
\end{lemma}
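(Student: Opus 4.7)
The plan is to use the integral representation of the flow, and then apply a bootstrap argument similar to Lemma~\ref{0A} to replace Gr\"{o}nwall with a direct two-pass estimate. Writing $\dot q_s=v_s$ and $\dot v_s=b(q_s)$ together with integration by parts yields
\begin{equation*}
  q_r(x,u)-q_r(y,v)-(x-y)-r(u-v)\ =\ \int_0^r(r-s)\bigl[b(q_s(x,u))-b(q_s(y,v))\bigr]\,ds,
\end{equation*}
and the analogous identity for $v_r-v_r-(u-v)$ without the factor $(r-s)$. Denote $\delta_s:=\snorm{q_s(x,u)-q_s(y,v)}$ and $M:=\max(\snorm{x-y},\snorm{(x-y)+t(u-v)})$. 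By condition (B1) the first displayed identity gives the basic inequality $\delta_r\le \snorm{(x-y)+r(u-v)}+L\int_0^r(r-s)\,\delta_s\,ds$.

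A first observation I would establish is the convex interpolation bound $\snorm{(x-y)+r(u-v)}\le M$ for every $r\in[0,t]$, which follows from writing $(x-y)+r(u-v)=(1-r/t)(x-y)+(r/t)[(x-y)+t(u-v)]$ and applying the triangle inequality. Plugging this into the integral inequality, bounding $L\int_0^r(r-s)\delta_s\,ds\le (Lt^2/2)\sup_{s\le t}\delta_s$, and using \eqref{0} (so $Lt^2/2\le 1/2$), I would obtain the crude bound $\sup_{r\le t}\delta_r\le 2M$.

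I then reinsert this crude bound into the integral inequality to get the sharper estimate
\begin{equation*}
  \delta_r\ \le\ M+L\int_0^r(r-s)\cdot 2M\,ds\ =\ M(1+Lr^2)\ \le\ (1+Lt^2)M,
\end{equation*}
which proves \eqref{12}. Substituting the crude bound $\delta_s\le 2M$ back into the displayed integral identity for $q_r-q_r-(x-y)-r(u-v)$ immediately produces $Lt^2M$ on the right, giving \eqref{10}. For \eqref{11}, the integral identity for the velocity difference gives the first inequality with $\sup_{s\le t}\delta_s$ and a factor $Lt$, and then plugging in \eqref{12} yields the second inequality.

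The only subtle point, which I expect to be the main obstacle in writing the proof cleanly, is keeping the constants sharp: a naive Gr\"{o}nwall would produce an exponential factor, so one must resist applying it and instead exploit \eqref{0} through the two-step bootstrap — crude bound first, then re-substitute — to obtain the linear factor $(1+Lt^2)$ and the clean $Lt^2M$ remainder. Everything else is a direct parallel of Lemma~\ref{0A} with $(x,v)$ replaced by the difference $((x-y),(u-v))$.
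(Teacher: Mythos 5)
Your proof is correct and follows essentially the same route as the paper's: the paper defers to Lemma 3.2 of \cite{BoEbZi2018}, whose argument is exactly the Duhamel representation $q_r=x+ru+\int_0^r(r-s)b(q_s)\,ds$ for the difference flow, the convex-interpolation bound $\snorm{(x-y)+r(u-v)}\le M$, and the two-pass bootstrap (crude bound $\sup_r\delta_r\le 2M$ via $\L t^2\le 1$, then re-substitution) in place of Gr\"onwall. No gaps.
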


Lemma~\ref{0A} is used in the proof of the Foster-Lyapunov drift condition in Lemma \ref{lem:LYAP}.  Lemma~\ref{1A} is used in
the proof of Lemma~\ref{lem:contr} below.

\subsection{Bounds related to two-scale coupling}

The following lemma is used in the proof of Theorem \ref{thm:CONTR} to obtain a contraction for the two-scale coupling when the distance between the components of the coupling is sufficiently small, i.e.,  $\anorm{x-y}<R$.

\begin{lemma} \label{lem:contr}
Suppose that $\gamma >0$ and $t>0$ satisfy $\gamma t \le 1$ and $\L t^2 \le 1/4$.  Then for any $x, y, u, v \in \H^s$ such that $v^h = u^h$ and $v^{\ell} = u^{\ell} + \gamma (x^{\ell} - y^{\ell})$, we have
\begin{description}
\item[(i)] $\snorm{\tilde{\mathcal{C}}^{-\frac{1}{2}} \mathcal{C}^{\frac{s}{2}}  (q_t^{\ell}(x,u) - q_t^{\ell}(y,v) )} \le \\ \left(1-\gamma t + \dfrac{5}{8} \dfrac{\sigma_{max}}{\sigma_{min}}  L t^2 \right) \snorm{ \tilde{\mathcal{C}}^{-\frac{1}{2}} \mathcal{C}^{\frac{s}{2}}  (x^{\ell}-y^{\ell}) } + \dfrac{5}{8} \sigma_{max} L t^2 \snorm{x^h-y^h}$
\smallskip
\item[(ii)] $\snorm{ q_t^h(x,u) - q_t^h(y,v)} \le \left(1 - \dfrac{1}{4} t^2  \right) \snorm{x^h - y^h} + \dfrac{1}{4} \sigma_{min}^{-1} t^2   \snorm{ \tilde{\mathcal{C}}^{-\frac{1}{2}} \mathcal{C}^{\frac{s}{2}} (x^{\ell}-y^{\ell})}  $
\end{description}
\end{lemma}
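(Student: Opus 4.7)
The plan is to analyze $\Delta q(r) := q_r(x,u) - q_r(y,v)$ by linearizing the flow about the correct baseline dynamics in each projection---free motion on $\H^{s,\ell}$ (where Lemma~\ref{1A} suffices) and a harmonic oscillator on $\H^{s,h}$ (where condition (B2) makes this the right choice). Before tackling either part, I would establish a single uniform a priori bound. Since $u^h = v^h$ and $u^\ell - v^\ell = -\gamma(x^\ell - y^\ell)$, orthogonality gives
\[ \snorm{(x-y) + t(u-v)}^2 = (1-\gamma t)^2 \snorm{x^\ell - y^\ell}^2 + \snorm{x^h - y^h}^2 \le \snorm{x-y}^2 \]
using $\gamma t \le 1$, so Lemma~\ref{1A}, eq.~\eqref{12}, yields $\snorm{\Delta q(s)} \le (1 + Lt^2) \snorm{x-y} \le (5/4) \snorm{x-y}$ uniformly for $s \in [0,t]$.

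For (i), I would write $\Delta q(t) = (x-y) + t(u-v) + E(t)$ with $E(t) := \int_0^t (t-s)[b(q_s(x,u))-b(q_s(y,v))]\,ds$, so that projecting onto $\H^{s,\ell}$ gives $\Delta q^\ell(t) = (1-\gamma t)(x^\ell - y^\ell) + E^\ell(t)$. Condition (B1) plus the a priori bound yield $\snorm{E^\ell(t)} \le \snorm{E(t)} \le (5/8) L t^2 \snorm{x-y}$. The rest is bookkeeping: apply $\snorm{\tilde{\mathcal{C}}^{-1/2}\mathcal{C}^{s/2}\cdot}$, noting that $E^\ell \in \H^{s,\ell}$ so \eqref{compnorm} gives $\snorm{\tilde{\mathcal{C}}^{-1/2}\mathcal{C}^{s/2} E^\ell(t)} \le \sigma_{max}\snorm{E^\ell(t)}$; then split $\snorm{x-y} \le \snorm{x^\ell-y^\ell} + \snorm{x^h-y^h}$ and reconvert $\snorm{x^\ell-y^\ell} \le \sigma_{min}^{-1}\snorm{\tilde{\mathcal{C}}^{-1/2}\mathcal{C}^{s/2}(x^\ell-y^\ell)}$ via \eqref{compnorm} to recover the stated coefficients $(1-\gamma t) + (5/8)(\sigma_{max}/\sigma_{min}) L t^2$ and $(5/8)\sigma_{max} L t^2$.

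For (ii), the critical step is to linearize the high-mode dynamics about the harmonic oscillator rather than free motion. Write $b^h(q) = -q^h + c^h(q)$ where $c^h(q) := b^h(q) + q^h$ is $(1/3)$-Lipschitz by (B2). Then $\Delta q^h$ satisfies $\ddot{(\Delta q^h)} + \Delta q^h = c^h(q_\cdot(x,u)) - c^h(q_\cdot(y,v))$ with $\Delta q^h(0) = x^h - y^h$ and $\dot{(\Delta q^h)}(0) = u^h - v^h = 0$. Variation of constants together with the a priori bound gives
\[ \snorm{\Delta q^h(t)} \le \cos(t)\snorm{x^h - y^h} + (1/3)\!\int_0^t (t-s)\snorm{\Delta q(s)}\,ds \le \cos(t)\snorm{x^h - y^h} + (5/24) t^2 \snorm{x-y}. \]
Splitting $\snorm{x-y}$ again and absorbing the high-mode piece into the main term via the elementary inequality $\cos(t) + (5/24) t^2 \le 1 - t^2/4$ (which follows from $\cos t \le 1 - t^2/2 + t^4/24$ together with $t^2 \le 1$, implied by $Lt^2 \le 1/4$ and $L \ge 1$), together with $5/24 \le 1/4$ and \eqref{compnorm}, finishes (ii).

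The main obstacle is identifying the correct linearization for (ii): a naive application of Lemma~\ref{1A} on $\H^{s,h}$ produces only the non-contractive factor $1 + O(t^2)$, because its baseline $\ddot\phi = 0$ does not see the restoring $-q^h$ term that (B2) secretly provides. Once the decomposition $b^h = -\mathrm{id}^h + c^h$ is in place and the variation-of-constants formula is set up with the oscillatory fundamental solution, the remaining numerical inequality is routine.
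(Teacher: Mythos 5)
Your proposal is correct and follows essentially the same route as the paper's proof: the same variation-of-constants representations (free motion baseline $(1-\gamma t)z_0^\ell$ plus the $(t-s)$-kernel integral on the low modes, and the decomposition $b^h=-\mathrm{id}^h+c^h$ with the $\cos/\sin$ fundamental solution on the high modes), the same a priori bound $\sup_{s\le t}\snorm{\Delta q(s)}\le \tfrac54\snorm{x-y}$ via Lemma~\ref{1A} and $\gamma t\le 1$, and the same numerical estimates yielding the coefficients $\tfrac58 L t^2$ and $1-\tfrac14 t^2$. No gaps.
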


\begin{proof}
Let $\mathcal{G}(x) = b(x) + x$, $z_t = q_t(x,u) - q_t(y,v)$, and $w_t = \frac{dz_t}{dt}$. By \eqref{eq:dynamicsgeneral}, \begin{align*}
	&\frac{d}{dt} z^{\ell}_t \ = \ w^{\ell}_t,\quad \frac{d}{dt} w^{\ell}_t \ = \  b^{\ell}(q_t(x,u)) - b^{\ell}(q_t(y,v)), \\
	&\frac{d}{dt} z^h_t \ = \ w^h_t,\quad \frac{d}{dt} w^h_t \ = \ -z^h_t + \mathcal{G}^h(q_t(x,u)) - \mathcal{G}^h(q_t(y,v)),
\end{align*}
with $w_0 = - \gamma z^{\ell}_0$. These 
are second order linear ordinary differential equations, perturbed
by a nonlinearity. A variation of constants ansatz shows that they are
equivalent to the equations
  \begin{align*}
	& z^{\ell}_t \ = \ (1-\gamma t)  z^{\ell}_0+ \int_0^t (t-r) \left( b^{\ell}(q_r(x,u)) - b^{\ell}(q_r(y,v)) \right) dr, \\
	& z^h_t \ = \ \cos(t) z^h_0 + \int_0^t \sin(t-r) \left( \mathcal{G}^h(q_r(x,u)) - \mathcal{G}^h(q_r(y,v)) \right) dr.
\end{align*}
Since $t^2\le \L t^2 \le 1/4$, $\gamma t \le 1$,
and by Assumption~\ref{B123} (B1) and (B2) and \eqref{compnorm}, 
\begin{align}
	 \snorm{\tilde{\mathcal{C}}^{-\frac{1}{2}} \mathcal{C}^{\frac{s}{2}}  z^{\ell}_t} \ &\le \ (1-\gamma t) \snorm{\tilde{\mathcal{C}}^{-\frac{1}{2}} \mathcal{C}^{\frac{s}{2}}  z^{\ell}_0}+ \sigma_{max} \L \int_0^t (t-r) \snorm{z_r} dr, \nonumber \\
	\ &\le  \ (1-\gamma t) \snorm{\tilde{\mathcal{C}}^{-\frac{1}{2}} \mathcal{C}^{\frac{s}{2}} z^{\ell}_0}+ \sigma_{max} \frac{ \L t^2}{2} \sup_{r \le t} \snorm{z_r}, \label{ieq:zl} \\
	 \snorm{z^h_t} \ &\le \ |\cos(t)| \snorm{z^h_0} + \frac{1}{3} \int_0^t |\sin(t-r)| \snorm{z_r} dr, \nonumber \\
	\ &\le  \left(1-\frac{t^2}{2} +\frac{t^4}{24}  \right) \snorm{z^h_0} + \frac{t^2}{6}  \sup_{r \le t} \snorm{z_r}, \label{ieq:zh} 
\end{align}
Here in \eqref{ieq:zh} we used that $t\le 1/2$, and hence $|\cos(t)| \le 1- (1/2) t^2 + (1/24) t^4$, and $|\sin (t-r)|\le t$. Since $w_0=-\gamma z_0^\ell$ and $\gamma t\le 1$,  Lemma~\ref{1A} and \eqref{compnorm} imply 
\begin{align*}
\sup_{r \le t} \snorm{z_r} \le \frac{5}{4} (  \snorm{z_0^{\ell}} + \snorm{z_0^h} ) \le \frac{5}{4} \left( \sigma_{min}^{-1} \snorm{\tilde{\mathcal{C}}^{-\frac{1}{2}} \mathcal{C}^{\frac{s}{2}} z_0^{\ell}} + \snorm{z_0^h} \right)  \;.
\end{align*}
Inserting this estimate into \eqref{ieq:zl} and \eqref{ieq:zh}, and again using $t^2 \le 1/4$ yields, \begin{align*}
	& \snorm{\tilde{\mathcal{C}}^{-\frac{1}{2}} \mathcal{C}^{\frac{s}{2}} z^{\ell}_t} \ \le \ \left(1-\gamma t + \frac{5}{8} \frac{\sigma_{max}}{\sigma_{min}}  L t^2 \right) \snorm{\tilde{\mathcal{C}}^{-\frac{1}{2}} \mathcal{C}^{\frac{s}{2}} z^{\ell}_0}+ \frac{5}{8} \sigma_{max} L t^2 \snorm{z_0^h}, \\
	& \snorm{z^h_t} \ \le \ \left(1- \frac{1}{4} t^2 \right) \snorm{z^h_0} + \frac{1}{4} \sigma_{min}^{-1}  t^2    \snorm{\tilde{\mathcal{C}}^{-\frac{1}{2}} \mathcal{C}^{\frac{s}{2}} z_0^{\ell}}  \qquad \text{as required.}
\end{align*}
\end{proof}

Recall that the two-scale coupling
that we consider ensures that $\xi^{\ell} -\eta^{\ell} =-\gamma z^{\ell}$ 
with the maximal possible probability, where $z=x-y$. The following lemma enables us to control the probability that $\xi^{\ell} -\eta^{\ell} \neq -\gamma z^{\ell}$ for small distances $\anorm{z}<R$.

\begin{lemma}\label{lem:X}
For any choice of $\gamma$,
$ P[\xi^{\ell}-\eta^{\ell}\neq -\gamma z^{\ell}] \le   \snorm{ \gamma \tilde{\mathcal{C}}^{-\frac{1}{2}} \mathcal{C}^{\frac{s}{2}} z^{\ell} }/\sqrt{2\pi} \;.$
\end{lemma}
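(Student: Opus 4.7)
The plan is to start from the identity \eqref{eq:TVnormal} already derived in the paper, which identifies $P[\xi^{\ell}-\eta^{\ell}\neq -\gamma z^{\ell}]$ with the total variation distance between the finite-dimensional Gaussian measures $\mathcal{N}(0,\tilde{\mathcal{C}})$ and $\mathcal{N}(\gamma z^{\ell},\tilde{\mathcal{C}})$ on $\H^{\ell}$. Since these two Gaussians share the same covariance and differ only by a mean shift, I can reduce to a standard (whitened) form.

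More concretely, I would apply the linear isomorphism $\tilde{\mathcal{C}}^{-1/2}$ on $\H^{\ell}$, under which $\mathcal{N}(0,\tilde{\mathcal{C}})$ is pushed forward to the standard Gaussian $\mathcal{N}(0,I)$ on $\H^{\ell}$ and $\mathcal{N}(\gamma z^{\ell},\tilde{\mathcal{C}})$ is pushed forward to $\mathcal{N}(\tilde{\mathcal{C}}^{-1/2}\gamma z^{\ell},I)$. Since total variation is invariant under measurable bijections, this reduces the problem to bounding the TV distance between a standard Gaussian and its translate by the vector $\mathsf{h}:=\gamma\tilde{\mathcal{C}}^{-1/2}z^{\ell}$. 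By rotational invariance of the standard Gaussian, this quantity depends only on $\|\mathsf{h}\|$, and a one-dimensional computation along the direction $\mathsf{h}/\|\mathsf{h}\|$ yields the closed form
\begin{equation*}
d_{\TV}\bigl(\mathcal N(0,I),\mathcal N(\mathsf{h},I)\bigr)\ =\ 2\Phi(\|\mathsf{h}\|/2)-1,
\end{equation*}
where $\Phi$ is the standard normal c.d.f.

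Finally I would apply the elementary estimate $2\Phi(t/2)-1=\int_{-t/2}^{t/2}\phi(s)\,ds\le t\,\phi(0)=t/\sqrt{2\pi}$ for $t\ge 0$, coming from the fact that the density $\phi$ is maximized at the origin. Taking $t=\|\mathsf{h}\|=\gamma\|\tilde{\mathcal{C}}^{-1/2}z^{\ell}\|$ and recalling from \eqref{eq:alphanorm} that $\|\tilde{\mathcal{C}}^{-1/2}z^{\ell}\|=\snorm{\tilde{\mathcal{C}}^{-1/2}\mathcal{C}^{s/2}z^{\ell}}$ yields the claimed bound. The only potentially subtle point is justifying the closed-form TV formula in the Hilbert space setting, but since the coupling operates only on the finite-dimensional subspace $\H^{\ell}$ where $\tilde{\mathcal{C}}$ is invertible, this is a genuinely finite-dimensional Gaussian computation and presents no real obstacle.
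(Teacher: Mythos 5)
Your proposal is correct and follows essentially the same route as the paper: both start from \eqref{eq:TVnormal}, whiten by $\tilde{\mathcal{C}}^{-1/2}$ on the finite-dimensional subspace $\H^{\ell}$, reduce to a one-dimensional mean shift, and bound the resulting total variation distance $2\Phi(\norm{\mathsf{h}}/2)-1$ by $\norm{\mathsf{h}}/\sqrt{2\pi}$ using the maximum of the Gaussian density at the origin.
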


Since $\tilde{\mathcal{C}} \mathcal{C}^{-s}$ is a trace class operator on $\H^s$,  note that $\tilde{\mathcal{C}}^{-\frac{1}{2}} \mathcal{C}^{\frac{s}{2}}$ is not a bounded operator.   Nonetheless, the bound appearing in Lemma~\ref{lem:X} is finite because the operator $\tilde{\mathcal{C}}^{-\frac{1}{2}} \mathcal{C}^{\frac{s}{2}}$ appearing in that bound only acts on $z^{\ell}$, i.e., the 
$n$-dimensional projection of $z$ onto the finite dimensional space $\H^{s,\ell}$.

\begin{proof}
Recall from \eqref{eq:TVnormal} that 
$P[\xi^{\ell}-\eta^{\ell}\neq -\gamma z^{\ell}] =d_{\TV}(\mathcal{N}(0,  \tilde{\mathcal{C}}) , \mathcal{N}(\gamma z^{\ell}, \tilde{\mathcal{C}}) )$. Let $\tilde{\mathcal{C}}^{\ell}$ denote the restriction of 
$\tilde{\mathcal{C}}$ to $\H^\ell$. Then since $z^\ell\in\H^\ell$, and
by scale invariance of the total variation distance,
\begin{eqnarray*}
\lefteqn{P[\xi^{\ell}-\eta^{\ell}\neq -\gamma z^{\ell}]\ =\ d_{\TV}(\mathcal{N}(0,  \tilde{\mathcal{C}}^\ell ) , \mathcal{N}(\gamma z^{\ell}, \tilde{\mathcal{C}}^\ell ) )}\\ 
&=& d_{\TV}(\mathcal{N}(0,  I_\ell) , \mathcal{N}(\gamma \tilde{\mathcal{C}}^{-1/2}z^{\ell}, I_\ell) )
\ =\ d_{\TV}(\mathcal{N}(0,  1) , \mathcal{N}(|\gamma \tilde{\mathcal{C}}^{-1/2}z^{\ell}|, 1) )\\
&= &2\, \mathcal{N}(0,1)  \left[ (0 ,\,  |\gamma \tilde{\mathcal{C}}^{-1/2} z^{\ell}/2 |)  \right]
\ \le\  |\gamma \tilde{\mathcal{C}}^{-1/2} z^{\ell} |/\sqrt{2\pi},
\end{eqnarray*}
see Figure \ref{fig:tv} for the last equation.
 \end{proof}
\begin{figure}[t]
\begin{center}
\includegraphics[width=0.35\textwidth]{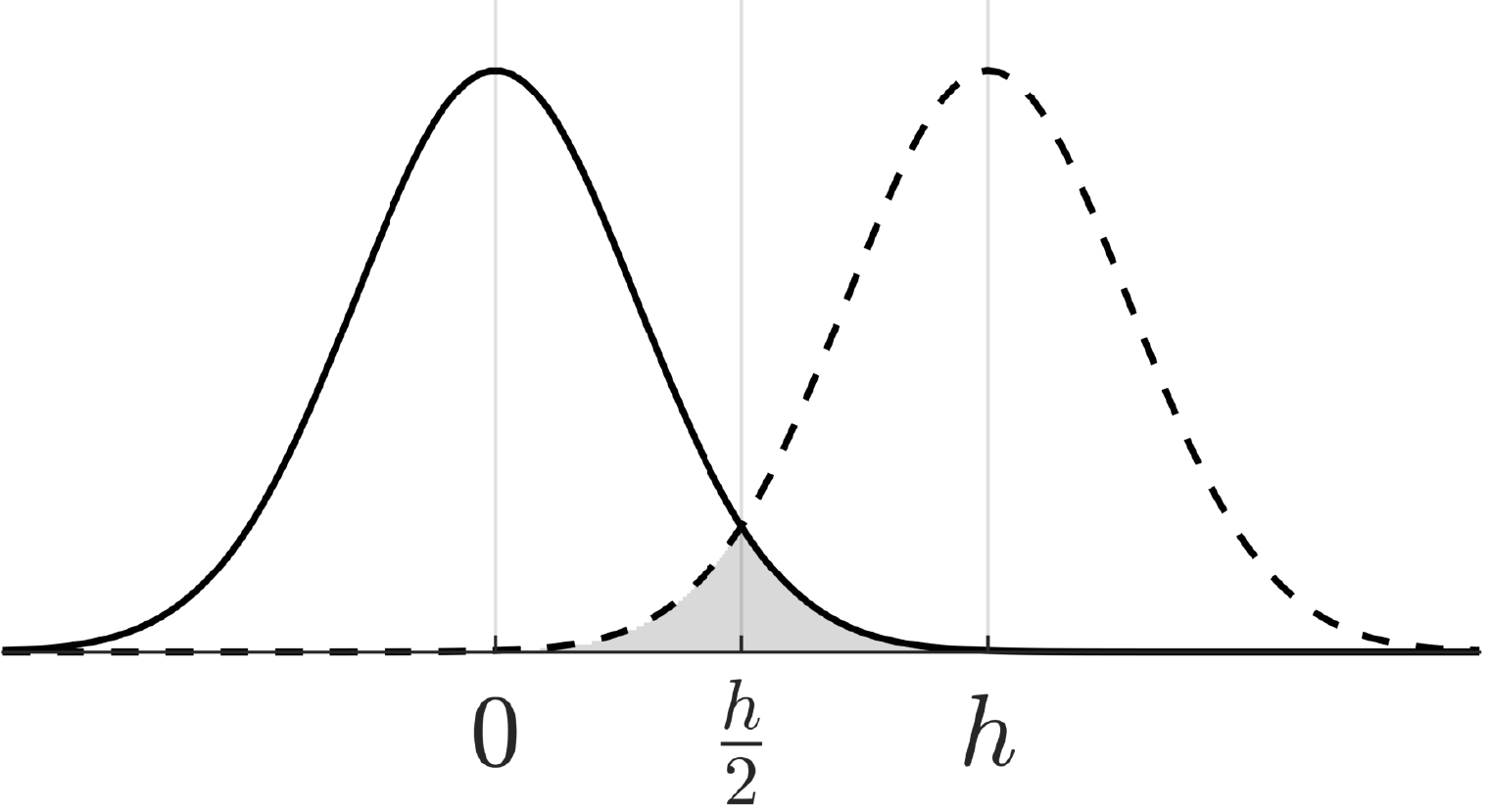} 
\end{center} \vspace{-0.2in}
\caption{ \small The total variation distance between the
one-dimensional normal distributions $\mathcal{N}(0,1)$ and $\mathcal{N}(h,1)$ equals one minus the area of the
shaded region. Therefore, $d_{\TV}(\mathcal{N}(0,  1) , \mathcal{N}(h,1))
=2\mathcal{N}(0,1)[(0,h/2)]$.}
 \label{fig:tv}
\end{figure}

Next we gather some elementary inequalities on the function $f$ in \eqref{eq:f} needed in the proof of Theorem~\ref{thm:CONTR}.  To state these results, let $f_{-}^{\prime}$ denote the left derivative of $f$ which satisfies \[
f_{-}^{\prime}(r) = \begin{cases} e^{-a r} & \text{for $r \le R$} \;, \\ 
0 & \text{for $r > R$}  \;.
\end{cases}
\] 

\begin{lemma} \label{lem:f}
For all $r,\tilde r>0$, the function $f$ in \eqref{eq:f} satisfies:
\begin{description}
\item[(i)] 
$f(\tilde r) - f(r) \le f^{\prime}_{-}(r) (\tilde r-r) \;.$ \smallskip
\item[(ii)] $f(\tilde r) - f(r)  \le  a^{-1} f'_{-}(r) \;.$ \smallskip
\item[(iii)] If $r \le  R$ then \ $
 \max(1,a R) e^{-\max(1,a R)} \le  {r  f^{\prime}_{-}(r)}/{f(r)} \le 1 \;.$
\end{description}
\end{lemma}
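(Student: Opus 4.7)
The plan is to observe that all three claims follow directly from the explicit formula $f(r) = a^{-1}(1 - e^{-a(r \wedge R)})$, which already exhibits $f$ as non-decreasing, concave, bounded above by $a^{-1}$, with left derivative exactly the piecewise expression stated in the paper. Part (i) is then the standard supporting-line inequality for a concave function with one-sided derivative $f'_-$.

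For part (ii), I would use the uniform upper bound: $f(\tilde r) - f(r) \le a^{-1} - f(r) = a^{-1} e^{-a(r \wedge R)}$. When $r \le R$, the right side equals $a^{-1} f'_-(r)$ exactly; when $r > R$, the claim reduces to $f(\tilde r) - f(r) \le 0$, which is true because $f$ is constant on $[R, \infty)$ while the stated bound $a^{-1} f'_-(r)$ vanishes.

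For part (iii), substituting the formulas gives, for $r \le R$, the identity $r f'_-(r)/f(r) = ar/(e^{ar} - 1) =: g(ar)$, where $g(t) := t/(e^t - 1)$. The upper bound $g(ar) \le 1$ is immediate from $e^{ar} \ge 1 + ar$. For the lower bound, I would first verify that $g$ is strictly decreasing on $(0, \infty)$ by a short calculation of $g'$ (the numerator of $g'$ vanishes at $t = 0$ and has derivative $-te^t$, hence is negative for $t > 0$), so that $g(ar) \ge g(aR)$ whenever $r \le R$. Then I split into two cases: if $aR \ge 1$, then $g(aR) = aR/(e^{aR} - 1) \ge aR\, e^{-aR} = \max(1, aR)\, e^{-\max(1, aR)}$, using $e^{aR} - 1 \le e^{aR}$; if instead $aR \le 1$, then $\max(1, aR)\, e^{-\max(1, aR)} = e^{-1}$, and monotonicity of $g$ gives $g(aR) \ge g(1) = 1/(e - 1) \ge e^{-1}$.

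Nothing in the lemma is deep; the only mild point of care is the case split on whether $aR \ge 1$ or $aR \le 1$ in part (iii), plus some trivial bookkeeping of the degenerate range $r > R$ in parts (i) and (ii) where $f'_-$ vanishes and the inequalities become automatic.
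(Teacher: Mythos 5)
Your proposal is correct and follows essentially the same route as the paper: concavity for (i), the explicit formula plus the trivial cases $r\ge R$ for (ii), and the reduction to $g(t)=t/(e^t-1)$ with its monotonicity for (iii). The only difference is cosmetic — you bound (ii) via $f\le a^{-1}$ rather than computing the difference directly, and you spell out the case split $aR\gtrless 1$ that the paper leaves implicit in the inequality $g(x)\ge\max(1,x)e^{-\max(1,x)}$.
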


\begin{proof}
Property (i) follows from the fact that $f$ is concave.  
Since $f$ is non-decreasing and constant for $r\ge R$,  (ii) is trivially true in the cases $\tilde r \le r$ and $r \ge R$.  In the case $r<\min (\tilde r,R)$,  \begin{align*}
f(\tilde r) - f(r) &\ =\ 
\frac 1a\left( e^{-ar}-e^{-a\min (\tilde r,R)}\right)
\  \le\ \frac 1a f_{-}^{\prime}(r)  \;.
\end{align*}  
Combining these cases gives (ii).  Let  \[
g(x) := \frac{x}{e^{x} - 1} ~~ \text{so that $\frac{r  f^{\prime}_{-}(r) }{f(r)} =  g(ar)$} \;.
\] Property (iii) then follows because $g$ decreases with $x$,  $\lim_{x \to 0} g(x) = 1$ and $g(x) \ge \max(1,x) e^{-\max(1,x)}$.
\end{proof}

\section{Proof of Foster-Lyapunov drift condition}  \label{sec:LYAPproof}

Before giving the proof of Lemma \ref{lem:LYAP}, here are some preparatory results.   Using the shorthand notation $\varrho(t)=\snorm{q_t(x,\xi )}^2$ and $\varphi(t) = \snorm{v_t(x,\xi )}^2$, \eqref{eq:dynamicsgeneral} implies 
\begin{eqnarray*}
\varrho'(t) &=& 2 \langle q_t(x,\xi ), v_t(x,\xi ) \rangle_s \;, \\
\varrho''(t) &=& 2 \left( \varphi(t) + \langle q_t(x,\xi ), b(q_t(x,\xi )) \rangle_s \right) \;, \\
\varphi'(t)&=& 2 \langle v_t(x,\xi ), b(q_t(x,\xi )) \rangle_s  \;.
\end{eqnarray*}
Hence, by Assumption \ref{B123} (B1) and (B3), we have the differential inequalities \begin{align}
\varrho''(t) &\le 2 \left( \varphi(t) - K\varrho(t) + A \right)\;, \qquad - \varrho''(t) \le 2 \L \varrho(t) \;.  \label{eq:q2} 
\end{align}
The following formula comes from two applications of integration by parts and is valid for any $k \in \mathbb{N}$ and for any twice differentiable function $g: \mathbb{R} \to \mathbb{R}$,  \begin{equation} \label{eq:byparts}
\int_0^T g(r) (T-r)^k dr =  \frac{g(0) T^{k+1}}{k+1} +  \frac{g'(0) T^{k+2}}{(k+1) (k+2)} + \int_0^T   \frac{g''(r) (T-r)^{k+2}}{(k+1) (k+2)} dr \;.
\end{equation} 
We also require the following inequalities 
\begin{eqnarray}
     \sup_{r \le T} \varrho(r)  &\le & 4 (\snorm{x} + T \snorm{\xi })^2 
     \ \le\ 8 ( \varrho(0) + T^2 \varphi(0) ) \;, \label{eq:AP1}  \\
    \sup_{r \le T} \varphi(r) &\le & 8 \L^2 T^2 \varrho(0) + 2 (1+2\L T^2)^2 \varphi(0) \ \le\  8 \L^2 T^2 \varrho(0) + \frac{9}{2} \varphi(0) ,\ \ \label{eq:AP2}  
\end{eqnarray}
which follow from Lemma~\ref{0A} and the assumption $\L T^2 \le 1/48$.

\begin{proof}[Proof of Lemma \ref{lem:LYAP}]
Apply in turn \eqref{eq:q2}, then \eqref{eq:byparts} with $g(r) = \varrho(r)$, and then \eqref{eq:q2} again, to obtain 
\begin{eqnarray*}
\lefteqn{E  \left[ \snorm{X'(x)}^2 \right] \ =\ E  \left[ \rho (T) \right]
\ =\
\rho (0)+T\, E[\rho'(0)]+
 \int_0^T (T-r) E \left[ \varrho''(r) \right] dr } \\
&\le & \snorm{x}^2 + 2 \int_0^T (T-r)\,  E \left[ \varphi(r) - K\varrho(r) + A \right] dr \nonumber  \\
&\le &(1 - K T^2) \snorm{x}^2 +  A T^2  
+ 2 \int_0^T (T-r) E \left[  \varphi(r) \right] dr \\&&\quad - 2 K \int_0^T \frac{(T-r)^3}{3!} E \left[   \varrho''(r) \right] dr  \\
&\le & (1 - K T^2) \snorm{x}^2 +   A T^2  
+ T^2  E \left[ \sup_{r \le T} \varphi(r) \right]   + \frac 16 K LT^4  \, E \left[   \sup_{r \le T} \varrho(r) \right]   \\
&\le &\left(1 - K T^2 + 8 L^2 T^4 + \frac{4}{3} K L T^4 \right) \snorm{x}^2+ A T^2  +   \left( \frac{9}{2}  + \frac{4}{3} K \L T^4 \right) T^2   E\snorm{\xi }^2 
\end{eqnarray*} 
where in the last step we applied \eqref{eq:AP1} and \eqref{eq:AP2}.
Since we assume $\L T^2 \le (1/48) ( K/\L )$, note that $8 \L^2 T^4 \le (1/6) K T^2$, and since also $KT^2\le \L T^2 \le 1/4$,  \begin{align*}
E \left[ \snorm{X'(x)}^2 \right]  \le \left(1 - {K T^2}/{2}  \right) \snorm{x}^2 + 5 (\tr(\tilde{\mathcal{C}} \mathcal{C}^{-s}) + A) T^2 
\end{align*}
{as required.}
\end{proof}

\section{Proofs of main results}  \label{sec:CONTRproof}

\begin{proof}[Proof of Theorem \ref{thm:CONTR}]
The parameters $\gamma$, $a$ and $\epsilon$ have been chosen in \eqref{Cgamma}, \eqref{Ca}, and \eqref{Ce} such that the following conditions are satisfied:
\begin{eqnarray}
\label{eq:A}\gamma T &\le &1,\\
\label{eq:B}\gamma R &\le &1/4,\\
\label{eq:C} a T &= &1, \\
\label{eq:D}  (\sigma_{max}/{\sigma_{min}} ) \L T &\le & {\gamma}/{4} , \\
\label{eq:E} \epsilon (A+\tr( \tilde{\mathcal{C}} \mathcal{C}^{-s} ) )  &= & \ \max (1,R/T)\,e^{ - \max (1,R  / T)}/160 \;.
\end{eqnarray}
Indeed, \eqref{eq:A} and \eqref{eq:B} hold by selection of $\gamma$  in \eqref{Cgamma}; 
\eqref{eq:C} holds by selection of $a$ in \eqref{Ca}; \eqref{eq:D} holds  because \eqref{A0A} implies that 
\begin{equation}\label{boundTR}
  ( \sigma_{max}/\sigma_{min} ) T \le \min(T^{-1}/4, R^{-1}/16) =\gamma/4  
\end{equation}
  by selection of $\gamma$ in \eqref{Cgamma}; and \eqref{eq:E} holds by selection of $\epsilon$ in \eqref{Ce}.

\medskip

Let $z=x-y$, $W=\xi-\eta$,  $r=\anorm{z}$, $R'= \anorm{X'(x,y)-Y'(x,y)}$, $G=1+\epsilon (\snorm{x}^2+\snorm{y}^2)$, $G'=1+\epsilon (\snorm{X'}^2+\snorm{Y'}^2) $, $F=f(r)$ and $F'=f(R')$.  We consider separately the cases where $r < R$ and $r \ge R$.\medskip

\medskip

{\em (i) Contractivity for $r < R$.} \ Expand \begin{align} \label{eq:tdfs}
E [ F' - F ] =  \rn{I} + \rn{II} \;, \quad \text{where} \quad \begin{cases} \rn{I} =E [ F' - F ; ~W^{\ell} = -\gamma z^{\ell}] \;, \\ \rn{II} =  E[ F' - F; ~W^{\ell} \ne -\gamma z^{\ell} ] \;.\end{cases} 
\end{align}
Let $Z_T = q_T(x,\xi) - q_T(y, \eta)$.  By Lemma~\ref{lem:f} (i),  Lemma~\ref{lem:contr}, \eqref{eq:D} and \eqref{Calpha},
\begin{align}
 \rn{I} &\ \le\ f'(r) E [ R' - r ; ~ W^{\ell} = -\gamma z^{\ell}] \nonumber \\
  &\ =\  f'(r) E \left[ \snorm{\tilde{\mathcal{C}}^{-\frac{1}{2}} \mathcal{C}^{\frac{s}{2}} Z^{\ell}_T} - \snorm{\tilde{\mathcal{C}}^{-\frac{1}{2}} \mathcal{C}^{\frac{s}{2}} z^{\ell}} + \alpha ( \snorm{Z^h_T} - \snorm{z^h} ) ;~ W^{\ell} = -\gamma z^{\ell} \right]  \nonumber \\
 &\ \le\  f'(r) \left( ( -\gamma T + \frac{5}{8} \frac{\sigma_{max}}{\sigma_{min}}  \L T^2 + \frac{1}{4}  \sigma_{min}^{-1} \alpha T^2 ) \snorm{\tilde{\mathcal{C}}^{-\frac{1}{2}} \mathcal{C}^{\frac{s}{2}} z^{\ell}}  \right. \nonumber \\
& \qquad\qquad\qquad  \left. +\, ( - \frac{1}{4} \alpha T^2  + \frac{5}{8} \sigma_{max} \L T^2 ) \snorm{z^h}    \right) ( 1 - P[ W^{\ell} \ne -\gamma z^{\ell} ])\nonumber  \\
 &\ \le\  -f'(r) \left( \frac{19}{32}\gamma T \snorm{\tilde{\mathcal{C}}^{-\frac{1}{2}} \mathcal{C}^{\frac{s}{2}} z^{\ell}} \, +\,  \frac{3}{32} \alpha T^2  \snorm{z^h}    \right) ( 1 - P[ W^{\ell} \ne -\gamma z^{\ell} ])  \label{eq:dfsI}
\end{align}
Moreover, by Lemma~\ref{lem:X} and Lemma~\ref{lem:f} (ii),
\begin{align}
 P[ W^{\ell} \ne -\gamma z^{\ell} ] &\ \le\ \frac{\gamma R}{\sqrt{2 \pi}}\ \le\ \frac{1}{4} \frac{1}{ \sqrt{2 \pi}}\ <\  \frac{1}{10} \;,\qquad\text{ and} \label{eq:dfsIP} \\
\rn{II} &\ \le\ a^{-1} f'(r) P[ w^{\ell} \ne -\gamma z^{\ell} ]\ \le\   f'(r) \frac{2}{5} \gamma T  \snorm{\tilde{\mathcal{C}}^{-\frac{1}{2}} \mathcal{C}^{\frac{s}{2}} z^{\ell}}  \;,  \label{eq:dfsII}
\end{align}
where in \eqref{eq:dfsIP} we used \eqref{eq:B} and in \eqref{eq:dfsII} we used \eqref{eq:C} and $\sqrt{2\pi }>5/2$.\smallskip\\
Inserting \eqref{eq:dfsI}, \eqref{eq:dfsIP}, and \eqref{eq:dfsII} into \eqref{eq:tdfs}, and using Lemma~\ref{lem:f} (iii), gives 
\begin{eqnarray}
\nonumber
\lefteqn{ E [ F' - F ] \ \le \ f'(r) \left( 
 - \tfrac{1}{8}  \gamma T \snorm{\tilde{\mathcal{C}}^{-\frac{1}{2}} \mathcal{C}^{\frac{s}{2}} z^{\ell}}  - \tfrac{1}{12} \alpha T^2   \snorm{z^h}    \right)} \\
 &\le & -\tfrac 1{12}\, \min (\gamma T,T^2)\, rf'(r)\label{eq:dfs}  \\
\nonumber
&\le &  - \tfrac{1}{12} \, \min (1, T/(4R),T^2)\,\max (1,R/T)\,e^{-\max (1,R/T)}\, f(r)\\
&\le & -c_1 \, F.\nonumber
\end{eqnarray}
Here we have introduced $c_1  := (1/12) T^2 \max (1,R/T) e^{-\max (1,  R / T) }$, and we have used \eqref{Cgamma}, \eqref{eq:C}, and 
the fact that $T^2\le \min (1,T/(4R))$ by \eqref{boundTR}.\smallskip\\
Furthermore, by Lemma~\ref{lem:LYAP}, 
\begin{align}
E [ G' - G ]\ &\le\  10 \epsilon (A+\tr(\tilde{\mathcal{C}} \mathcal{C}^{-s})) T^2 \
\le\ (3/4) {c_1} G \;, \label{eq:dGs}
\end{align} 
where in the last step we eliminated $\epsilon$ using \eqref{eq:E}.\smallskip\\
The Cauchy-Schwarz inequality,  \eqref{eq:dfs} and \eqref{eq:dGs} now imply 
\begin{eqnarray}
\lefteqn{E[ \rho(X',Y')]\ =\ E[ \sqrt{F' G'} ]\ \le\ \sqrt{ E[F'] } \sqrt{ E[G'] }} \label{eq:CS}\\
& \le & \sqrt{(1-c_1)F}\sqrt{ (1 +  3{c_1}/{4} )G}\  \le\ \sqrt{ 1 - {c_1}/{4}} \sqrt{FG}  \nonumber\\
&\le & \exp\left( - {c_1}/{8} \right) \rho(x,y), \label{eq:drho1}
\end{eqnarray}
where in the last step we used $1-\mathsf{x} \le e^{-\mathsf{x}}$ with $\mathsf{x}=c_1/4$.

\medskip

{\em (ii) Contractivity for $r \ge R$.} \ In this case, by \eqref{ieq:R} and \eqref{compnormB}, we have $\snorm{x}^2 + \snorm{y}^2 \ge 40  (A+\tr(\tilde{\mathcal{C}} \mathcal{C}^{-s})) / K$, and we can apply the Foster-Lyapunov drift condition in \eqref{eq:FL_coupling} to \eqref{eq:CS} to obtain \begin{align}
E &[ \rho(X',Y')] \le \sqrt{F} \sqrt{ E[G'] }  \le \sqrt{F} \left(1 + \epsilon \left( 1 - \frac{K T^2}{4} \right) (\snorm{x}^2 + \snorm{y}^2) \right)^{1/2}   \nonumber \\
&\le \sqrt{F} \left(    1-  \frac{5}{2} \epsilon  (A+\tr(\tilde{\mathcal{C}} \mathcal{C}^{-s})) T^2 + \epsilon \left( 1 - \frac{K T^2}{8} \right)  (\snorm{x}^2 + \snorm{y}^2) \right)^{1/2}  \nonumber \\
&\le \left(1 - c_2 \right)^{1/2} \rho(x,y)\, \le\, \exp\left( - {c_2}/{2} \right) \rho(x,y) \label{eq:drho2}
\end{align}
where $c_2:=  \min\left( K T^2/8,\,   T^2 \max (1,R/T)e^{-\max (1,R/T)}
/64 \right)$.

\medskip

{\em (iii) Global Contraction.} 
Let $c:=\min(c_1/8,c_2/2)=c_2/2$.  By combining the bounds in \eqref{eq:drho1} and \eqref{eq:drho2}, we see that for any $x,y \in \H^s$, \[
E [ \rho(X',Y')] \le e^{-c} \rho(x,y) \;.
\]
\end{proof}

\begin{proof}[Proof of Corollary \ref{cor:QBHMC}]
The Wasserstein contraction in \eqref{QBpHMC1} follows directly from Theorem \ref{thm:CONTR}, see e.g.\ \cite[Corollary 2.8]{BoEbZi2018} for a similar result.
The bound in \eqref{QBpHMC3} then follows from \eqref{QBpHMC1} by comparing
$\rho$ to the metric on $\H^s$. Indeed, recall that by \eqref{eq:f}
and \eqref{Ca}, $f(r)=(1-e^{-\min (r,R)/T}) T$. Let $x,y\in\H^s$, and let 
$r=\anorm{x-y}$. Suppose first that $r\le T$. Then $f(r)\ge r/e$, and thus 
$$\rho (x,y)\ \ge\ \sqrt{f(r)}\ \ge\ \sqrt{r/e}\ \ge\ r/\sqrt{eT}\ \ge
\ \snorm{x-y}\min(\sigma_{min},\alpha)/\sqrt{eT}$$
where in the last step we used \eqref{compnormA}.
Now suppose that $r>T$. Then since also $R\ge T$ by assumption,
$f(r)\ge (1-e^{-1})T$, and thus
we obtain
$$\rho (x,y)\ \ge\ \sqrt{(1-e^{-1})T\epsilon}\sqrt{\snorm{x}^2+\snorm{y}^2}\ \ge\ \sqrt{(1-e^{-1})T{\epsilon}/2}\snorm{x-y}.$$
Combining both cases and noting that by \eqref{Calpha}, $\alpha\ge \sigma_{min}$, we see that
$$\snorm{x-y}\ \le\ \max\left(
\sqrt{eT}/\sigma_{min}, \sqrt 2/\sqrt{(1-e^{-1})T{\epsilon}}
\right)\, \rho (x,y)\quad\text{for all }x,y 
,$$
which implies an analogue bound for the corresponding Wasserstein distances $\mathcal W^{s,1}$ and $\mathcal W_\rho$. Conversely, 
since $f(r)\le T$ for all $r$,
$$\rho (x,y)\ \le\ \sqrt{T}\sqrt{1+\epsilon\snorm{x}^2+\epsilon\snorm{y}^2}\ \le\    \sqrt{T} (1+\sqrt\epsilon\snorm x+\sqrt\epsilon\snorm{y})\quad\text{for all }x,y .     $$
Therefore, with $C$ defined by \eqref{Cdefi}, we obtain
\begin{eqnarray*}
\lefteqn{\mathcal W^{s,1} (\nu\pi^k ,\mu )\ = \ \mathcal W^{s,1} (\nu\pi^k ,\mu\pi^k )\ \le\ CT^{-1/2}\,
\mathcal W_\rho (\nu\pi^k,\mu\pi^k) }\\
& \le & CT^{-1/2}e^{-c k}\, \mathcal W_\rho (\nu ,\mu )
\ \le \ C\left( 1+\sqrt\epsilon M_1(\nu )+\sqrt\epsilon M_1(\mu )\right)e^{-c k}
\end{eqnarray*}
for all $k\in\mathbb N$ and all probability measures $\nu$ on $\H^s$.  Finally, by Lemma~\ref{lem:LYAP} and \eqref{Ce}, we have $\sqrt\epsilon M_1(\mu ) \le (1/4) K^{-1/2} e^{-R/(2 T)}$.
\end{proof}

\section{Proofs of results from Section 3 (Applications)} 

\subsection{Proofs of results for TPS}
 \label{sec:TPS_proofs}

To prove Theorem~\ref{thm:CONTR_TPS}, we compare the eigenvalues of $\bph{\mathcal{C}}$ to $\mathcal{C}$.  Note that these eigenvalues each have multiplicity $d$, and to account for this,  define the index function $\varphi(k,j) =  d (k-1) +j$.  Then the eigenvalues of  $\mathcal{C}$ are  \begin{equation} \label{lambda_tps}
\lambda_{\varphi(k,j)}= \left( \frac{ \tau }{k \pi} \right)^{2} \;, \quad k \in \mathbb{N} \;, \quad 1 \le j \le d \;,
\end{equation} and the eigenvalues of $\bph{\mathcal{C}}$ are 
\begin{equation} \label{Lambda_tps}
\bph{\Lambda}_{\varphi(k,j)} = \lambda_{\varphi(k,j)} \left( \frac{\theta_k}{\sin(\theta_k)} \right)^2, ~ \theta_k:=\frac{k \pi}{2 (m+1)},  ~ 1 \le k \le m, ~ 1 \le j \le d.
\end{equation}   
The following lemma helps estimate the error of the eigenvalues of the approximation $\bph{\mathcal{C}}$ relative to those of $\mathcal{C}$. 

\begin{lemma} \label{eigs_TPS}
For any $m \in \mathbb{N}$, for all $1 \le k \le m$, and for $1 \le j \le d$,
\begin{description}
    \item[(E1)] $| \bph{\Lambda}_{\varphi(k,j)} - \lambda_{\varphi(k,j)} | = \bph{\Lambda}_{\varphi(k,j)} - \lambda_{\varphi(k,j)} \le \lambda_{\varphi(k,j)} \dfrac{k^2 \pi^2 }{6 (m+1)^2} = \dfrac{\tau^2}{6 (m+1)^2}$, 
    \item[(E2)] 
    $\left(\dfrac{ \bph{\Lambda}_1}{ \bph{\Lambda}_{\varphi(k,j)}}\right)^{1/2} \le \left(\dfrac{\lambda_1}{\lambda_{\varphi(k,j)}}\right)^{1/2} \left( 1  + \dfrac{\pi^2}{16 (m+1)^2} \right)$.
\end{description}
\end{lemma}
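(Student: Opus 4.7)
My plan is to reduce both inequalities to bounds on a single elementary function, $\theta \mapsto \theta / \sin \theta$, evaluated at $\theta_k = k\pi/(2(m+1))$. Note that by \eqref{lambda_tps}--\eqref{Lambda_tps}, one has the factorisation
\[
\bph{\Lambda}_{\varphi(k,j)} \;=\; \lambda_{\varphi(k,j)}\,\bigl(\theta_k/\sin\theta_k\bigr)^2,
\]
so the entire lemma is a statement about the trigonometric ratio evaluated at the rescaled frequencies $\theta_k \in (0, m\pi/(2(m+1))] \subset (0,\pi/2)$.

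For part (E1), I would write
\[
\bph{\Lambda}_{\varphi(k,j)} - \lambda_{\varphi(k,j)} \;=\; \lambda_{\varphi(k,j)}\bigl[(\theta_k/\sin\theta_k)^2 - 1\bigr],
\]
so the claim reduces to proving $(\theta/\sin\theta)^2 - 1 \le (2/3)\theta^2$ on $(0,\pi/2]$; indeed, $\lambda_{\varphi(k,j)}\cdot (2/3)\theta_k^2 = (\tau/(k\pi))^2\cdot k^2\pi^2/(6(m+1)^2) = \tau^2/(6(m+1)^2)$, explaining the final identity in (E1). To establish this trigonometric inequality, I would invoke the Mittag-Leffler expansion $\csc^2\theta = \sum_{n\in\mathbb{Z}} (\theta - n\pi)^{-2}$, which yields
\[
(\theta/\sin\theta)^2 - 1 \;=\; \theta^2 \sum_{n\neq 0}\frac{1}{(\theta - n\pi)^2}.
\]
On $(0,\pi/2]$ one has $|\theta - n\pi| \ge (2n-1)\pi/2$ for $n\ge 1$ and $|\theta - n\pi| \ge |n|\pi$ for $n\le -1$; summing the resulting geometric-like series and using $\sum_{n\ge 1}(2n-1)^{-2} = \pi^2/8$ and $\sum_{n\ge 1} n^{-2} = \pi^2/6$, the tail sums to at most $1/2 + 1/6 = 2/3$, which is exactly the bound required.

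For part (E2), the key simplification is that $\theta_k = k\theta_1$, so the preconditioning factors telescope:
\[
\sqrt{\bph{\Lambda}_1/\bph{\Lambda}_{\varphi(k,j)}} \;=\; \sqrt{\lambda_1/\lambda_{\varphi(k,j)}}\,\cdot\,\frac{\sin\theta_k}{k\sin\theta_1}.
\]
Using the trivial bound $\sin\theta_k \le \theta_k = k\theta_1$, the multiplicative correction is at most $\theta_1/\sin\theta_1$. I would then bound this by $1/(1-\theta_1^2/6)$ via the standard estimate $\sin x \ge x - x^3/6$ on $[0,\pi/2]$, and finally verify that $1/(1-\theta_1^2/6) \le 1 + \pi^2/(16(m+1)^2)$ by substituting $\theta_1 = \pi/(2(m+1))$ and showing the equivalent inequality $y \le 1/3$ for $y := \pi^2/(24(m+1)^2)$, which holds for all $m \ge 1$ since $y \le \pi^2/96$.

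The main obstacle is the trigonometric inequality in (E1); the cleanest path is through the Mittag-Leffler identity, though one could alternatively prove it by showing that $\theta \mapsto (\csc^2\theta - \theta^{-2})/\text{const}$ is monotone on $(0,\pi/2]$ and evaluating at the endpoint $\pi/2$, where $(\pi/2)^2 - 1 < (2/3)(\pi/2)^2$. Everything else is straightforward manipulation of the explicit formulas \eqref{lambda_tps}--\eqref{Lambda_tps} combined with elementary Taylor-type estimates.
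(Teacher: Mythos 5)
Your proposal is correct, and it follows the same skeleton as the paper's proof: both reduce everything to the factorization $\bph{\Lambda}_{\varphi(k,j)}=\lambda_{\varphi(k,j)}(\theta_k/\sin\theta_k)^2$ and then to bounds on $\theta/\sin\theta$ at $\theta_k\in(0,\pi/2)$. The only genuine difference is how the central inequality $(\theta/\sin\theta)^2-1\le\tfrac23\theta^2$ for (E1) is established: the paper uses the elementary chain $1<\theta/\sin\theta<1+\theta^2/4<5/3$ together with $a^2-1=(a+1)(a-1)\le\tfrac83\cdot\tfrac{\theta^2}{4}=\tfrac23\theta^2$, whereas you invoke the Mittag--Leffler expansion $\csc^2\theta=\sum_{n\in\mathbb Z}(\theta-n\pi)^{-2}$ and bound the tail by $\tfrac12+\tfrac16$. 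Both are valid (your tail estimate and the constants $\pi^2/8$, $\pi^2/6$ check out); the paper's route is more elementary and self-contained, while yours is slicker once the partial-fraction identity is granted. For (E2) the two arguments are essentially identical — the paper bounds $\sin\theta_k/\theta_k<1$ and $\theta_1/\sin\theta_1<1+\theta_1^2/4$ directly, and your telescoping via $\theta_k=k\theta_1$ followed by $\sin x\ge x-x^3/6$ and the check $y\le 1/3$ for $y=\pi^2/(24(m+1)^2)$ reproduces the same bound $1+\pi^2/(16(m+1)^2)$.
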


\begin{proof}
This lemma is an easy consequence of the elementary inequalities \begin{equation} \label{eigs_ieqs}
\frac{1}{2} < 1 - \frac{\theta^2}{6} < \frac{\sin(\theta)}{\theta} < 1 \quad \text{and} \quad 1 < \frac{\theta}{\sin(\theta)} < 1 + \frac{\theta^2}{4} < \frac{5}{3}
\end{equation} which are valid for $0 < \theta < \pi/2$.   Indeed, \eqref{lambda_tps}, \eqref{Lambda_tps} and \eqref{eigs_ieqs} imply  \begin{align*}
| \bph{\Lambda}_{\varphi(k,j)}& - \lambda_{\varphi(k,j)} | = \bph{\Lambda}_{\varphi(k,j)} - \lambda_{\varphi(k,j)} \\
&= \lambda_{\varphi(k,j)} \left( \left( \frac{\theta_k}{\sin(\theta_k)} \right)^2  - 1 \right) \\
&= \lambda_{\varphi(k,j)}  \left(  \frac{\theta_k}{\sin(\theta_k)}   + 1 \right) \left(  \frac{\theta_k}{\sin(\theta_k)}   - 1 \right) \\
&\le  \lambda_{\varphi(k,j)}\frac{2}{3} \theta_k^2  =  \lambda_{\varphi(k,j)} \frac{k^2 \pi^2}{6 (m+1)^2} = \frac{\tau^2}{6 (m+1)^2}
\end{align*}
as required for (E1).  For (E2), we use \eqref{Lambda_tps} to write \[
\left(\dfrac{ \bph{\Lambda}_1}{ \bph{\Lambda}_{\varphi(k,j)}}\right)^{1/2} = \left(\dfrac{\lambda_1}{\lambda_{\varphi(k,j)}}\right)^{1/2}
 \frac{\sin(\theta_k)}{\theta_k}  \frac{\theta_1}{\sin(\theta_1)} \;.
\] Hence, by \eqref{eigs_ieqs}, \[
\left(\dfrac{ \bph{\Lambda}_1}{ \bph{\Lambda}_{\varphi(k,j)}}\right)^{1/2} \le \left(\dfrac{\lambda_1}{\lambda_{\varphi(k,j)}}\right)^{1/2}  \left( 1 + \frac{\theta_1^2}{4} \right) = \left(\dfrac{\lambda_1}{\lambda_{\varphi(k,j)}}\right)^{1/2}  \left(1 + \frac{\pi^2}{16 (m+1)^2} \right) 
\] as required for (E2).
\end{proof}

\begin{proof}[Proof of Theorem~\ref{thm:CONTR_TPS}]
This result is an application of Corollary~\ref{cor:QBHMC}.  Since $\bph{\mathcal{C}}$ is a finite-dimensional matrix, Assumption~\ref{A456} holds for $\bph{\mathcal{C}}$ with $s=0$, and since we choose $\bph{\tilde{\mathcal{C}}} = \bph{\mathcal{C}}$, Assumption~\ref{C123} also holds with $s=0$.  Therefore, to apply Corollary~\ref{cor:QBHMC}, it suffices to check that: (i) Assumption~\ref{B123} holds with dimension-free constants $L$, $n$, $K$, and $A$, (ii) the dimension-free $R$ defined in \eqref{R_TPS} satisfies condition~\eqref{ieq:R}, and (iii) the dimension-free condition~\eqref{A0A_TPS} on the duration $T$ implies \eqref{A0A} holds.  We then invoke Corollary~\ref{cor:QBHMC} to conclude convergence in the standard $L^1$ Wasserstein distance.

\medskip

{\em Verify Assumption~\ref{B123} (B1)-(B3).} For (B1), note that \begin{align*}
|\bph{b}(\bph{x}) - \bph{b}(\bph{y}) | &\le (1+\bph{\Lambda}_1 L_G) |\bph{x}-\bph{y}| \\
&\le (1+2 \lambda_1 L_G)  |\bph{x}-\bph{y}|  = (1+2 (\tau^2 / \pi^2) L_G)  |\bph{x}-\bph{y}|
\end{align*}
where in the last step we used Lemma~\ref{eigs_TPS} (E1) which implies that $\bph{\Lambda}_1 - \lambda_1 \le \lambda_1 \pi^2 / (6 (m+1)^2) \le \lambda_1$ since $m \ge 1$.  Thus, (B1) holds with $L=1+\kappa$ since $\kappa = 2 (\tau^2 / \pi^2) L_G$. For (B2), since $n=m_{\ell} d = \varphi(m_{\ell},d)$, $n+1=\varphi(m_{\ell}+1,1)$ and \begin{align*}
    |\bph{b}^h(\bph{x}) + \bph{x}^h &- \bph{b}^h(\bph{y}) - \bph{y}^h | 
    \le \bph{\Lambda}_{\varphi(m_{\ell}+1,1)} L_G  |\bph{x}-\bph{y}| \\
    &\le \lambda_{\varphi(m_{\ell}+1,1)} L_G |\bph{x}-\bph{y}| + ( \bph{\Lambda}_{\varphi(m_{\ell}+1,1)} - \lambda_{\varphi(m_{\ell}+1,1)}) L_G |\bph{x}-\bph{y}| \\
    &\le 2  \lambda_{\varphi(m_{\ell}+1,1)} L_G |\bph{x}-\bph{y}| \le (1/3) |\bph{x}-\bph{y}| 
\end{align*}
where in the second to last step we used \[ \bph{\Lambda}_{\varphi(m_{\ell}+1,1)} - \lambda_{\varphi(m_{\ell}+1,1)} \le \lambda_{\varphi(m_{\ell}+1,1)} (m_{\ell}+1)^2 \pi^2 / (6 (m+1)^2) \le \lambda_{\varphi(m_{\ell}+1,1)}
\] which follows from Lemma~\ref{eigs_TPS} (E1) since $m \ge (m_{\ell}+1) \pi/2$, and in the last step, we used that $m_{\ell}+1 \ge  \sqrt{6 L_G} \tau/\pi$.  Hence, (B2) holds with $n=m_{\ell} d$.
For (B3), \begin{align*}
    \langle \bph{x}, \bph{b}(\bph{x}) \rangle 
    &\le  - | \bph{x}|^2  +  | \bph{x}|  |\bph{\mathcal{C}} \nabla G_{m}(\bph{x})| \le  - (1/2) | \bph{x}|^2  + (1/2)  |\bph{\mathcal{C}} \nabla G_{m}(\bph{x})|^2 \\
    &\le  - (1/2) | \bph{x}|^2  + (1/2) \bph{\Lambda}_1^2 M_G^2 \tau \le  - (1/2) | \bph{x}|^2  + \lambda_1^2 \tau M_G^2 
\end{align*}
where in the last step we used \[
\bph{\Lambda}_1^2 - \lambda_1^2 = 2 \lambda_1 (\bph{\Lambda}_1 - \lambda_1) + (\bph{\Lambda}_1 - \lambda_1)^2 \le \lambda_1^2
\] which follows from (E1) since $m \ge 1$.  Thus, (B3) holds with $K=1/2$ and $A= \lambda_1^2 \tau M_G^2  = ( \tau^5 / \pi^4) M_G^2$.  
To summarize, Assumption~\ref{B123} holds with dimension-independent constants $L=1+\kappa$, $n=m_{\ell} d$ where $m_{\ell} = \lfloor \sqrt{3 \kappa} \rfloor$, $K=1/2$, and $A= (\tau^5 / \pi^4) M_G^2$.

\medskip

{\em Verify Conditions~\eqref{ieq:R} \& \eqref{A0A}.} To show that $R$ defined in \eqref{R_TPS} satisfies condition~\eqref{ieq:R} and that condition~\eqref{A0A_TPS} on the duration parameter implies condition~\eqref{A0A}, in this paragraph we gather some additional bounds.  Since $m_{\ell} \le \sqrt{3 \kappa}$, we have
\begin{equation} \label{sigma_max_tps}
\sigma_{max} = \bph{\Lambda}_{\varphi(m_{\ell},1)}^{-1/2} = \lambda_{\varphi(m_{\ell},1)}^{-1/2} (\sin(\theta_{m_{\ell}})/\theta_{m_{\ell}}) \le   m_{\ell} \pi / \tau \le \sqrt{6 L_G} \;,
\end{equation} \begin{equation}
\label{sigma_min_tps}
\sigma_{min}^{-1} = \bph{\Lambda}_1^{1/2} = \lambda_1^{1/2} + \lambda_1^{1/2} \left( \theta_1/\sin(\theta_1) - 1 \right) \le 2 \lambda_1^{1/2} = 2 \tau/\pi\;,
\end{equation}
where in \eqref{sigma_max_tps} and \eqref{sigma_min_tps} we used \eqref{eigs_ieqs}.  Moreover, by Lemma~\ref{eigs_TPS} (E2),  \begin{equation}
\label{sigma_max_min}
\dfrac{\sigma_{max}}{\sigma_{min}} = \left(\dfrac{ \bph{\Lambda}_1}{ \bph{\Lambda}_{\varphi(m_{\ell},1)}}\right)^{1/2} \le \left(\dfrac{ \lambda_1}{ \lambda_{\varphi(m_{\ell},1)}}\right)^{1/2} \left( 1  +  \frac{\pi^2}{16 (m+1)^2} \right) \le 2 m_{\ell}
\end{equation} since $m \ge 1$ and $\sqrt{\lambda_1/\lambda_{\varphi(m_{\ell},1)}} = m_{\ell}$, and by Lemma~\ref{eigs_TPS} (E1), \begin{align}
\tr(\bph{\mathcal{C}}) &\le \tr(\mathcal{C})+ \sum_{i=1}^{m d} ( \bph{\Lambda}_i -  \lambda_i) \le  \tr(\mathcal{C})+ (d \tau^2 / 6) \ m/(m+1)^2 \nonumber \\
&\le  2 \tr(\mathcal{C}) = d \tau^2 / 3 \;, \quad \text{since $\tr(\mathcal{C}) = d \tau^2 / 6$.}
\label{trace_C_TPS}
\end{align}   

Let $R_m=8 \sqrt{40} (A + \tr(\bph{\mathcal{C}}))^{1/2} \sigma_{max} L K^{-1/2}$ denote the RHS of \eqref{ieq:R}.
Then using \eqref{sigma_max_tps}, \eqref{trace_C_TPS}, $L=1+\kappa$, $K=1/2$, and $A=M_G^2 \tau^5 / \pi^4$, we have \begin{align*}
R_m^2 &\le 128 \times 40 \times 6 L_G (1+\kappa)^2 ( (\tau^5 / \pi^4)  M_G^2 + \tr(\bph{\mathcal{C}}) ) \\
&\le 256 \times 20 \times \kappa (1 + \kappa)^2 \left( 3  (\tau^3/\pi^2) M_G^2 + d \pi^2 \right)  \\
&\le 256 \times 20 \pi^2 \kappa (1+\kappa)^2 ( (\tau/ \pi)^3 M_G^2 + d) = R^2
\end{align*} which implies that $R$ defined in \eqref{R_TPS} satisfies \eqref{ieq:R}.
Moreover, by \eqref{sigma_max_min},  \begin{equation} \label{smmL_A0A_TPS}
\dfrac{\sigma_{max}}{\sigma_{min}} \L \le 2 m_{\ell} (1+\kappa)  \le 2 \sqrt{3 \kappa} (1+\kappa)  \;.
\end{equation} 
Inserting \eqref{smmL_A0A_TPS} into the LHS and RHS of \eqref{A0A} gives \eqref{A0A_TPS}.  Thus, whenever $T$ satisfies condition~\eqref{A0A_TPS} then condition~\eqref{A0A} holds.

\medskip

{\em Invoke Corollary~\ref{cor:QBHMC}.}
By Corollary~\ref{cor:QBHMC} and using $K=1/2$, as long as $T$ satisfies \eqref{A0A_TPS}, \begin{align}  \label{QBpHMC_TPS_1}
\mathcal W^{0,1}  (\nu_m \pi_m^k, \mu_m) &\le  C_{m} e^{-c k} \left( 1 + \sqrt{\epsilon_m} M_1(\nu_m) + (1/8) e^{-R/(2 T)} \right) 
\end{align} holds with
the dimension-free rate in \eqref{crate_TPS} and the constants: \begin{equation} \label{QBpHMC_TPS_Cm}
\begin{aligned}
C_{m} &= \max(2 T \sigma_{min}^{-1}, 23 (A + \tr(\bph{\mathcal{C}}))^{1/2} e^{R/(2 T)}) \;, \\
\epsilon_m &= (1/160) (A + \tr( \bph{\mathcal{C}} ))^{-1} e^{-R/T} \;.
\end{aligned}
\end{equation}
These dimension-dependent constants can be upper bounded by dimension-free constants $C$ and $\epsilon$ given in \eqref{C_TPS} and \eqref{eps_TPS}, by using $A=(\tau^5 / \pi^4) M_G^2 $, \eqref{sigma_min_tps} and \eqref{trace_C_TPS}. 
Thus, \eqref{QBpHMC_TPS} holds.
\end{proof}

\subsection{Proofs of results for PIMD}
 \label{sec:PIMD_proofs}

To prove Theorem~\ref{thm:CONTR_PIMD}, we compare the eigenvalues of $\bph{\mathcal{C}}_{\mathsf{a}}$ to $\mathcal{C}_{\mathsf{a}}$. The leading eigenvalue of $\mathcal{C}_{\mathsf{a}}$ has multiplicity $d$, while all of the other eigenvalues have multiplicity $2d$.  If $m$ is odd, then the leading eigenvalue of both $\bph{\mathcal{C}}_{\mathsf{a}}$ has multiplicity $d$, while all of the other eigenvalues of $\bph{\mathcal{C}}_{\mathsf{a}}$ have multiplicity $2 d$. However, if $m$ is even, then the trailing and leading eigenvalues of $\bph{\mathcal{C}}_{\mathsf{a}}$ have multiplicity $d$, while all of the other eigenvalues of of $\bph{\mathcal{C}}_{\mathsf{a}}$ have multiplicity $2 d$. To account for these multiplicities, it is helpful to define the index function \[
\varphi(k,j) = \begin{cases}   j & \text{if $k=1$,~~  $1 \le j \le d$} \;, \\
2 d (k-2)+j+d  & \text{if $k>1$, ~~ $1 \le j \le 2 d$} \;.
\end{cases}
\]  
For any $k\in \mathbb{N}$, the eigenvalues of $\mathcal{C}_{\mathsf{a}}$ are  \begin{equation} \label{lambda_pimd}
\lambda_{\varphi(k,j)}= \begin{cases} \mathsf{a}^{-1} & \text{if $k=1$, ~~ $1 \le j \le d$}, \\ \dfrac{1}{\mathsf{a}+\omega_k^2} &  \text{if $k>1$,~~ $1 \le j \le 2 d$}.
\end{cases}
\end{equation}  For all $m \in \mathbb{N}$ and $k \in \{1, \dots, \lceil (m+1)/2 \rceil \}$,
the eigenvalues of $\bph{\mathcal{C}}_{\mathsf{a}}$ are 
\begin{equation} \label{Lambda_pimd}
\bph{\Lambda}_{\varphi(k,j)} = \begin{cases} 
\mathsf{a}^{-1}  & \text{if $k=1$, ~~ $1 \le j \le d$,} \\
\dfrac{1}{\mathsf{a} +\omega_k^2 \sin^2(\theta_k)/\theta_k^2}  &   \begin{cases}  
\text{if $k>1$ \& $k \ne \frac{m}{2}+1$}, & 1 \le j \le 2 d,   \\
\text{if $k=\frac{m}{2}+1$}, & 1 \le j \le d,
\end{cases}
\end{cases}
\end{equation} 
 Here we have introduced 
\begin{equation} \label{eq:om2_k}
\theta_k := \
\frac{(k-1) \pi}{m} \quad \text{and} \quad
\omega_{k}^2 :=  \frac{4  (k-1)^2 \pi^2}{\beta^2} \;.
\end{equation}
Note that the definition in \eqref{Lambda_pimd} includes  odd or even values of $m$.  The following lemma estimates the error of the eigenvalues of $\bph{\mathcal{C}}_{\mathsf{a}}$ relative to those of $\mathcal{C}_{\mathsf{a}}$.

\begin{lemma} \label{eigs_PIMD}
For any $m \in \mathbb{N}$ and $k \in \{1, \dots, \lceil (m+1)/2 \rceil \}$,
\begin{description}
    \item[(E1)] $| \bph{\Lambda}_{\varphi(k,1)} - \lambda_{\varphi(k,1)} | = \bph{\Lambda}_{\varphi(k,1)} - \lambda_{\varphi(k,1)} \le \lambda_{\varphi(k,1)} 2 \theta_k^2 $ \;.
    \item[(E2)] 
    $\left(\dfrac{ \bph{\Lambda}_1}{ \bph{\Lambda}_{\varphi(k,1)}}\right)^{1/2} \le \left(\dfrac{\lambda_1}{\lambda_{\varphi(k,1)}}\right)^{1/2}$ \;.
\end{description}
\end{lemma}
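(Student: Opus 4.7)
The lemma is an elementary comparison of the closed-form spectra \eqref{lambda_pimd}--\eqref{Lambda_pimd}, and I expect both parts to reduce to standard inequalities for $\sin(\theta)/\theta$. The key observation at the outset is that the restriction $k \le \lceil (m+1)/2\rceil$ ensures $\theta_k \le \pi/2$ in both parities of $m$ (when $m$ is even, the extreme case is $\theta_{m/2+1}=\pi/2$; when $m$ is odd, $\theta_{(m+1)/2}=(m-1)\pi/(2m) < \pi/2$). I will use this throughout.

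For (E1), the case $k=1$ is trivial since $\theta_1=\omega_1^2=0$ and both eigenvalues collapse to $\mathsf a^{-1}$. For $k>1$, I would write out the difference algebraically,
\begin{equation*}
\bph{\Lambda}_{\varphi(k,1)} - \lambda_{\varphi(k,1)}
\;=\;\lambda_{\varphi(k,1)}\,\frac{\omega_k^2\bigl(1 - \sin^2(\theta_k)/\theta_k^2\bigr)}{\mathsf a + \omega_k^2\sin^2(\theta_k)/\theta_k^2},
\end{equation*}
noting that $\sin^2(\theta_k)/\theta_k^2 \le 1$ makes the difference nonnegative (justifying the absolute value). I would then drop the additive $\mathsf a$ in the denominator (it is only a lower bound we need) to eliminate $\omega_k^2$, obtaining
\begin{equation*}
\bph{\Lambda}_{\varphi(k,1)} - \lambda_{\varphi(k,1)} \;\le\; \lambda_{\varphi(k,1)}\,\theta_k^2\Bigl(\frac{\theta_k^2}{\sin^2(\theta_k)} - 1\Bigr).
\end{equation*}
The standard inequality $\theta/\sin(\theta)\le \pi/2$ on $(0,\pi/2]$ then gives $\theta_k^2/\sin^2(\theta_k) - 1 \le \pi^2/4 - 1 < 2$, which yields the claimed bound $2\theta_k^2 \lambda_{\varphi(k,1)}$.

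For (E2), the ratio is easier: when $k=1$ it is just $1$, while for $k>1$,
\begin{equation*}
\frac{\bph{\Lambda}_1}{\bph{\Lambda}_{\varphi(k,1)}}
\;=\; 1 + \frac{\omega_k^2\sin^2(\theta_k)/\theta_k^2}{\mathsf a}
\;\le\; 1 + \frac{\omega_k^2}{\mathsf a}
\;=\; \frac{\lambda_1}{\lambda_{\varphi(k,1)}},
\end{equation*}
again invoking $\sin^2(\theta_k)/\theta_k^2 \le 1$. Taking square roots finishes the proof.

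There is no real obstacle here; the only mild subtlety is choosing the lower bound on the denominator of the difference in (E1) so that the eventual constant is a clean integer rather than the sharp $\pi^2/4$. Dropping the additive $\mathsf a$ (rather than the $\omega_k^2\sin^2(\theta_k)/\theta_k^2$ term) is the choice that cancels $\omega_k^2$ and leaves a purely geometric factor in $\theta_k$, which is what makes the elementary $\pi/2$ bound applicable uniformly in $\mathsf a$ and $\beta$.
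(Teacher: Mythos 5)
Your proof of (E2) is correct and coincides with the paper's. For (E1), however, the displayed intermediate inequality does not follow from the step you describe, and the error is exactly what makes your final estimate appear to work. Starting from the (correct) identity
\[
\bph{\Lambda}_{\varphi(k,1)} - \lambda_{\varphi(k,1)}
= \lambda_{\varphi(k,1)}\,\frac{\omega_k^2\bigl(1-\sin^2(\theta_k)/\theta_k^2\bigr)}{\mathsf a+\omega_k^2\sin^2(\theta_k)/\theta_k^2},
\]
dropping the additive $\mathsf a$ in the denominator yields
$\lambda_{\varphi(k,1)}\bigl(\theta_k^2/\sin^2(\theta_k)-1\bigr)$, \emph{without} the extra prefactor $\theta_k^2$ that appears in your display. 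That prefactor is not justified (and the inequality you wrote is in fact false when $\theta_k<1$ and $\omega_k^2/\mathsf a$ is large). With the correct intermediate bound, your crude estimate $\theta/\sin\theta\le\pi/2$ only gives $\bph{\Lambda}_{\varphi(k,1)}-\lambda_{\varphi(k,1)}\le(\pi^2/4-1)\lambda_{\varphi(k,1)}$, which is weaker than the claimed $2\theta_k^2\lambda_{\varphi(k,1)}$ for small $\theta_k$ — and the $\theta_k^2$ decay is what the lemma asserts and what is used downstream in the trace estimate.

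The gap is easily repaired: you need a bound of the form $\theta^2/\sin^2(\theta)-1\le C\theta^2$ on $(0,\pi/2]$ with $C\le 2$, rather than a constant bound. For instance,
\[
\frac{\theta^2}{\sin^2(\theta)}-1
=\frac{(\theta-\sin\theta)(\theta+\sin\theta)}{\sin^2(\theta)}
\le\frac{(\theta^3/6)\,(2\theta)}{(2\theta/\pi)^2}
=\frac{\pi^2}{12}\,\theta^2\ <\ 2\theta^2 ,
\]
using $\theta-\sin\theta\le\theta^3/6$ and the concavity bound $\sin\theta\ge 2\theta/\pi$ on $(0,\pi/2]$. With this substitution your route is valid and is genuinely different from (and slightly cleaner than) the paper's, which instead bounds the denominator from below via $\sin^2(\theta_k)/\theta_k^2\ge 1-\theta_k^2/3$, rewrites the result as $\lambda_{\varphi(k,1)}\bigl([1-x]^{-1}-1\bigr)$ with $x\le\theta_k^2/3$, and uses $\theta_k\le\pi/2$ to absorb $(1-\theta_k^2/3)^{-1}$ into the constant $2$. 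Your observation that $k\le\lceil(m+1)/2\rceil$ guarantees $\theta_k\le\pi/2$ in both parities of $m$ is correct and is the same range restriction the paper relies on.
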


\begin{proof}
This lemma is an easy consequence of the inequalities in \eqref{eigs_ieqs}.  For $k=1$, (E1) and (E2) trivially hold since $\bph{\Lambda}_{\varphi(k,1)}=\lambda_{\varphi(k,1)}=\mathsf{a}^{-1}$.  For $k>1$, \eqref{Lambda_pimd}, \eqref{lambda_pimd}, and \eqref{eigs_ieqs} imply  \begin{align*}
| \bph{\Lambda}_{\varphi(k,j)} &- \lambda_{\varphi(k,j)} | = \bph{\Lambda}_{\varphi(k,j)} - \lambda_{\varphi(k,j)}  \\
&=  \left( [\mathsf{a}+\omega_k^2 \sin^2(\theta_k)/\theta_k^2 ]^{-1}  - \lambda_{\varphi(k,j)} \right) \\
&\le \left( [\mathsf{a}+\omega_k^2 - \omega_k^2 \theta_k^2 /3]^{-1}  - \lambda_{\varphi(k,j)} \right) \quad \text{since $\sin^2(\theta_k)/\theta_k^2 \ge 1-\theta_k^2 / 3$} \\
&\le \lambda_{\varphi(k,j)} \left( [ 1 - \omega_k^2  \theta_k^2 /(3 (\mathsf{a}+\omega_k^2))]^{-1}  - 1 \right)\\
&\le \lambda_{\varphi(k,j)}  \theta_k^2/ 3 (1 -  \theta_k^2/ 3)^{-1}  \\
&\le \lambda_{\varphi(k,j)} 2 \theta_k^2 \quad \text{as required for (E1).}
\end{align*}
For (E2) with $k>1$, by \eqref{Lambda_pimd}, \eqref{lambda_pimd}, and \eqref{eigs_ieqs},  \[
\dfrac{ \bph{\Lambda}_1}{ \bph{\Lambda}_{\varphi(k,j)}} = 1+\frac{\omega_k^2}{\mathsf{a}}  \frac{\sin^2(\theta_k)}{\theta_k^2}   \le 1+\frac{\omega_k^2}{\mathsf{a}}  = \dfrac{\lambda_1}{\lambda_{\varphi(k,j)}} \;.
\] Taking square roots of both sides then gives (E2).
\end{proof}

\begin{proof}[Proof of Theorem~\ref{thm:CONTR_PIMD}]
This proof is very similar to the Proof of Theorem~\ref{thm:CONTR_TPS} with some differences which are highlighted below.  

\medskip

{\em Verify Assumption~\ref{B123} (B1)-(B3).}
Since both $\bph{\mathcal{C}}_{\mathsf{a}}$ 
and $\mathcal{C}_{\mathsf{a}}$ have leading eigenvalue $\mathsf{a}^{-1}$, (B1) holds with $L=1+6 \mathsf{a}^{-1} L_G$.  Similarly, (B3) holds with $K=1/2$ and $A=(1/2)  \beta  \lambda_1^2 M_G^2= (1/2) \beta  \mathsf{a}^{-2} M_G^2$.  Moreover, (B2) holds with $n=2 m_{\ell} d -d = \varphi(m_{\ell}, d)$, since $n+1= \varphi(m_{\ell}+1,1)$, $m_{\ell} \ge  \sqrt{3 L_G/2} (\beta/\pi)$, and $m \ge 2 \pi m_{\ell}$.

\medskip

{\em Verify Conditions~\eqref{ieq:R} \& \eqref{A0A}.} 
By \eqref{Lambda_pimd} and \eqref{eq:om2_k},
\begin{equation} \label{sigma_max_min_pimd}
\sigma_{min} = \mathsf{a}^{1/2} \;, \quad 
\sigma_{max} \le  \beta^{-1} \left( \beta^2 \mathsf{a} + 4 (m_{\ell}-1)^2 \pi^2 \right)^{1/2} \le \sqrt{\mathsf{a} +  6 L_G} \;,
\end{equation}  
since $m_{\ell}-1<\sqrt{3 L_G/2} (\beta/\pi)$.  Moreover, by Lemma~\ref{eigs_PIMD} (E2),  \begin{align}
\frac{\sigma_{max}}{\sigma_{min}} \le \left( 1+ \frac{\omega_{m_{\ell}}^2}{\mathsf{a}} \right)^{1/2} &= \left( 1+ \frac{4 (m_{\ell}-1)^2 \pi^2}{\beta^2 \mathsf{a}}  \right)^{1/2} \le \left( 1+\frac{6 L_G}{\mathsf{a}} \right)^{1/2} \;. \label{sigma_mm_pimd}
\end{align} 
 Furthermore, by Lemma~\ref{eigs_PIMD} (E1), \begin{align}
\tr(\bph{\mathcal{C}}_{\mathsf{a}}) &\le \tr(\mathcal{C}_{\mathsf{a}})+ 2 d 
\sum_{k=1}^{\lceil (m+1)/2 \rceil} ( \bph{\Lambda}_{\varphi(k,1)} -  \lambda_{\varphi(k,1)})  \nonumber \\
&\le  \tr(\mathcal{C}_{\mathsf{a}}) + d \beta^2 = \frac{d}{2 \mathsf{a}}+ \frac{d \beta}{4 \sqrt{\mathsf{a}}} \left( 1 +
\frac{2}{e^{\sqrt{\mathsf{a}} \beta}-1} \right) + d \beta^2 \nonumber \\
&\le 2 d ( \mathsf{a}^{-1}  + \beta^2 )
\label{trace_Ca}
\end{align} where in the last step we used 
$1 + 2/(e^{2 \mathsf{x}}-1) < \mathsf{x} + \mathsf{x}^{-1}$ valid for all $\mathsf{x}>0$.

Let $R_m=8 \sqrt{40} (A + \tr(\bph{\mathcal{C}}))^{1/2} \sigma_{max} L K^{-1/2}$ denote the RHS of \eqref{ieq:R}.  Using $L=1+ 6 \mathsf{a}^{-1} L_G$, $K=1/2$, $A=(1/2) (\beta  / \mathsf{a}^2) M_G^2$, \eqref{sigma_max_min_pimd}, and \eqref{trace_Ca}, \begin{align*}
    R_m^2 &\le 128 \times 40 \mathsf{a} (1 + 6 L_G \mathsf{a}^{-1})^3  ( (1/2) \beta \mathsf{a}^{-2}  M_G^2 + 2 d (\beta^2 + \mathsf{a}^{-1}) ) \\
    &\le 256 \times 20    (1+6 L_G \mathsf{a}^{-1})^3 \left( (1/2)  \beta \mathsf{a}^{-1}M_G^2 + 2 d (\beta^2 \mathsf{a} + 1) \right) \\
    &\le 256 \times 20  (1+\kappa)^3 \left( (1/2) \beta \mathsf{a}^{-1}  M_G^2+ 2 d (\beta^2 \mathsf{a} + 1) \right) = R^2 \;,
\end{align*}
which implies that $R$ defined in \eqref{R_PIMD} satisfies \eqref{ieq:R}.  Moreover, by \eqref{sigma_mm_pimd}, \begin{equation} \label{smmL_A0A_PIMD}
\left(\dfrac{\sigma_{max}}{\sigma_{min}}\right) \L \le  (1+\kappa)^{3/2}
\;.
\end{equation} 
Inserting \eqref{smmL_A0A_PIMD} into \eqref{A0A} gives \eqref{A0A_PIMD}.  

\medskip

{\em Invoke Corollary~\ref{cor:QBHMC}.}
By Corollary~\ref{cor:QBHMC}, as long as $T$ satisfies \eqref{A0A_PIMD}, \eqref{QBpHMC_TPS_1} holds with the dimension-free rate in \eqref{crate_PIMD} and the constants in \eqref{QBpHMC_TPS_Cm} with $\bph{\mathcal{C}}$ replaced with $\bph{\mathcal{C}}_{\mathsf{a}}$.  Moreover, the dimension-dependent constants in \eqref{QBpHMC_TPS_Cm} can be upper bounded by dimension-free constants $C$ and $\epsilon$ given in \eqref{C_PIMD} and \eqref{eps_PIMD}, by using $A=(1/2) \beta M_G^2 \mathsf{a}^{-2}$, \eqref{sigma_max_min_pimd} and \eqref{trace_Ca}.  Thus, \eqref{QBpHMC_TPS} holds for the transition kernel of \eqref{eq:transitionstep_PIMD}.
\end{proof}

\bibliographystyle{amsplain}
\bibliography{bibphmc}

\end{document}